\documentclass[12pt,reqno,a4paper]{amsart}

\usepackage{amsmath, amssymb, amsthm}
\usepackage{enumerate, enumitem}
\usepackage{mathrsfs}
\usepackage{eqlist, array, float}
\usepackage{hyperref}
\usepackage{tikz, forest, tikz-qtree}
\usepackage{comment}
\usepackage{xcolor}

\theoremstyle{plain}
\newtheorem{theorem}{Theorem}[section]
\newtheorem{proposition}[theorem]{Proposition}
\newtheorem{lemma}[theorem]{Lemma}
\newtheorem{corol}[theorem]{Corollary}

\theoremstyle{definition}
\newtheorem{example}[theorem]{Example}

\theoremstyle{definition}

\newtheorem{remark}[theorem]{Remark}

\numberwithin{equation}{section}

\newcommand{\NN}{\mathbb{N}}
\newcommand{\ZZ}{\mathbb{Z}}

\newcommand{\mm}{\mathfrak{m}} 
\newcommand{\BB}{\mathcal{B}} 
 
\newcommand{\ord}{\operatorname{ord}}

\newcommand{\Mod}[1]{\ (\mathrm{mod}\ #1)}

\newcommand\TODO[1]{\textcolor{red}{#1}}
\newcommand\anitha[1]{\textcolor{blue}{#1}}

\begin{document}


\title[Branches of Markoff $m$-triples with two $k$-Fibonacci components]%
      {Branches of Markoff $m$-triples with two $k$-Fibonacci components}

\author[D. Alfaya, L. A. Calvo, P. J. Cazorla, J. Rodrigo \and A. Srinivasan]%
       {David Alfaya*  ***, Luis Ángel Calvo** ***, Pedro-José Cazorla*, 
        Javier Rodrigo* \and Anitha Srinivasan**}

\newcommand{\acr}{\newline\indent}

\address{\llap{*\,}Department of Applied Mathematics,\acr
ICAI School of Engineering, Comillas Pontifical University,\acr
C/Alberto Aguilera 25, 28015 Madrid,\acr Spain.}
\email{dalfaya@comillas.edu, jrodrigo@comillas.edu, pjcazorla@comillas.edu}

\address{\llap{**\,}Department of Quantitative Methods,\acr
ICADE, Comillas Pontifical University,\acr
C/Alberto Aguilera 23, 28015 Madrid,\acr Spain.}
\email{lacalvo@comillas.edu, asrinivasan@icade.comillas.edu}

\address{\llap{***\,}Institute for Research in Technology,\acr
Comillas Pontifical University,\acr
C/Santa Cruz de Marcenado 26, 28015 Madrid,\acr Spain.}
\email{dalfaya@comillas.edu, lacalvo@comillas.edu}

\thanks{\textit{Acknowledgements}.
This research was supported by project CIAMOD (Applications of computational methods and artificial intelligence to the study of moduli spaces, project PP2023\_9) funded by Universidad Pontificia Comillas, and by grant PID2022-142024NB-I00 funded by MCIN/AEI/10.13039/501100011033.}

\keywords{Markoff triples, generalised Markoff equation, generalised Fibonacci solutions.}

\begin{abstract}
We study infinite paths of Markoff $m$-triples, that is, solutions to the generalised Markoff equation
\[
x^2+y^2+z^2=3xyz+m,
\]
with $m>0$, with at least two $k$-Fibonacci components.
First, we obtain a complete classification of
Markoff $m$-triples whose last two entries are $k$-Fibonacci numbers and that are not roots of any Markoff trees.
We then prove that every such infinite path is contained in a branch, starting at a triple of the form
\[
\left(\frac{F_k(4r)}{3F_k(2r)},\,F_k(\ell+2r),\,F_k(\ell+4r)\right),
\]
where $r$ is an odd integer, $\ell\in\{1,2,\ldots, 2r\}$ and $3\nmid k$.
These branches are distributed among exactly $2r$ distinct trees.
\end{abstract}

\maketitle
\tableofcontents

\newpage

\section{Introduction}

\noindent Consider the function
\[
  \mm(x,y,z) := x^{2}+y^{2}+z^{2}-3xyz.
\]
Given a non-negative integer $m$, we say that an ordered triple $(a,b,c)\in \mathbb{N}^3$
is a \emph{Markoff \(m\)-triple} if
\[
  \mm(a,b,c)=m.
\]

\noindent This equation, called the Markoff $m$-equation, generalises the classical Diophantine equation introduced by A.~A.~Markoff, corresponding to the case \(m=0\)~\cite{M1,M2}.  
In that classical setting, all ordered solutions are connected via a unique infinite tree, generated by the three \emph{Vieta transformations}:
\begin{equation}\label{def:vieta}
\begin{aligned}
\nu_1(a,b,c) &= (b, c, 3bc - a), \\
\nu_2(a,b,c) &= (a, c, 3ac - b), \\
\nu_3(a,b,c) &= \ord(3ab - c, a, b),
\end{aligned}
\end{equation}
 where $\ord$ denotes the operation of rearranging a triple into non-decreasing order. By Lemma 2.1 of \cite{SC}, one has $3ab-c<b$, whereas $3ab-c$ may be either smaller or larger than $a$. 

\noindent If $m>0$, the above Vieta transformations are still valid and give rise to trees of solutions. As shown in~\cite{SC}, when \(m>0\) the number of distinct \(m\)-trees is equal to the number of \emph{minimal Markoff \(m\)-triples}, namely those ordered triples \((a,b,c) \in \mathbb{N}^3\) such that \(\nu_3(a,b,c)\) has a non-positive component. Equivalently, a Markoff $m$-triple $(a,b,c)$ is minimal if
$$c \ge 3ab.$$

An \emph{infinite path} is an infinite sequence of 
Markoff $m$-triples $\{(a_n,b_n,c_n)\}_{n\ge 0}$ such that each triple is obtained
from the previous one by applying one of the Vieta transformations $\nu_1$ or $\nu_2$.
Equivalently, for every $n\ge 1,$ we have
\begin{equation*}
\nu_3(a_{n+1},b_{n+1},c_{n+1})=(a_n,b_n,c_n).
\end{equation*}

\noindent In particular, we denote by $\BB(a_0,b_0,c_0)$ the \emph{branch rooted at $(a_0,b_0,c_0)$},
namely, the infinite path consisting exclusively of
$\nu_2$-steps, i.e.
$$
  \BB(a_0,b_0,c_0)
  := \bigl\{\nu_2^{\,n}(a_0,b_0,c_0)\mid n\in\mathbb Z_{\ge 0}\bigr\}.
$$
\noindent Two classical examples in the case \(m = 0\) are, the branch \(\BB(1,1,2)\), whose components are Fibonacci numbers of odd index, and the branch \(\BB(2,2,12)\), whose components are Pell numbers of odd index.

In this article, we deal with $k$-Fibonacci numbers, defined for any positive integer $k$ by
\begin{equation}\label{df:k_fibonacci}
\left\{
\begin{aligned}
F_k(0) &= 0, \\
F_k(1) &= 1, \\
F_k(n) &= k F_k(n-1) + F_k(n-2), \quad \text{for } n \ge 2.
\end{aligned}
\right.
\end{equation}
\noindent In the notation of \cite{Lehmer}, $F_k(n)$ corresponds to a Lucas-type sequence $U_n$ with parameters $P = k$ and $Q = -1$. There is an extensive literature on branches of $m$-trees whose three elements are $k$-Fibonacci components: the classical Markoff equation ($m=0$) was studied for Fibonacci numbers ($k=1$) by Luca and the fifth author in \cite{LS}, for Pell numbers ($k=2$) by Kafle, Togbe and Srinivasan in \cite{KST}, for $k>2$ by Gómez, Gómez and Luca in \cite{Gom}, and for Lucas sequences in \cite{AL,RSP}. By contrast, the literature on the Markoff equation for $m>0$ is comparatively scarce: the case of Fibonacci triples ($k=1$) was treated in \cite{ACMRS25a}, and that of $k$-Fibonacci triples with $k\ge2$ in \cite{ACMRS25b}. Finally, Luca~\cite{Luca} analysed Markoff triples ($m=0$) with two Fibonacci components $(k=1)$, describing the branch $\BB(1,1,2)$ and its image under $\nu_1$, and proving that only finitely many such triples can occur outside of these instances.

In this paper, we extend Luca's result by describing the infinite paths of Markoff $m$-triples with at least two $k$-Fibonacci components and $m>0$.

Our first result classifies all non-minimal Markoff \(m\)-triples whose
last two entries are \(k\)-Fibonacci numbers.

\begin{theorem}\label{the:nonminimal_general} Let $(a,b,c)$ be a non-minimal Markoff $m$-triple with $m>0$. Then \(b\) and \(c\) are $k$-Fibonacci numbers if and only if
\[
(a,b,c)=\left(\alpha_{k,r},\,F_k(N-r),\,F_k(N+r)\right),
\]
where $\alpha_{k,r}=\frac{F_k(4r)}{3F_k(2r)},$ \(r\) is an odd integer, \(3\nmid k\), and \(N>3r\); moreover, if \(k\in\{1,2\}\), then \(N\) must be odd.
\end{theorem}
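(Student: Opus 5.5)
The plan is to write the triple as ordered, $a\le b\le c$, with $b=F_k(p)$ and $c=F_k(q)$, $p\le q$. We first squeeze $3a$ very tightly between consecutive integers near $c/b$. We then use a Lucas-type identity to pin down $a$, and finally compute $m$ exactly.

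\textbf{Step 1: a size constraint from non-minimality.} Regard $\mm(a,b,c)=m$ as a quadratic in $c$. Its two roots are $c$ and $c'=3ab-c$, and $cc'=a^2+b^2-m$. Non-minimality means $c'>0$. Together with $m>0$ this gives
\[
0<c(3ab-c)<a^2+b^2\le 2b^2 .
\]
Hence $0<3a-c/b<2b/c\le 2$, and $3a-c/b$ is in fact small once $c/b$ is large. So $3a$ is an integer lying just above $c/b=F_k(q)/F_k(p)$.

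\textbf{Step 2: locating $3a$.} Put $d=q-p$ and let $L_k$ denote the companion Lucas sequence, $L_k(n)=F_k(n-1)+F_k(n+1)$. I will use the identity
\[
F_k(p+d)=L_k(d)F_k(p)+(-1)^{d+1}F_k(p-d),
\]
so that $c/b=L_k(d)\pm F_k(p-d)/F_k(p)$.
\begin{itemize}
\item If $d$ is odd, then $c/b$ is slightly larger than $L_k(d)$. This forces $3a\ge L_k(d)+1$, and then $3a-c/b\approx 1-\phi^{-d}$ is too large to be compatible with Step 1. Here $\phi$ is the dominant root of $x^2-kx-1$.
\item If $d$ is even, $d=2r$, then $c/b=L_k(2r)-F_k(p-2r)/F_k(p)$. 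This forces $3a=L_k(2r)$, i.e.\ $a=\alpha_{k,r}=F_k(4r)/(3F_k(2r))$, and moreover $3ab-c=F_k(p-2r)$.
\end{itemize}
The cases $d=0$ and very small $p$ or $d$ will be treated by hand; this bookkeeping of small exceptions, together with the odd-$d$ inequality, is the step I expect to be the main technical obstacle. Writing $p=N-r$ and $q=N+r$, the condition $3ab-c=F_k(N-3r)>0$ gives $N>3r$.

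\textbf{Step 3: integrality and positivity.} Requiring $a=L_k(2r)/3$ to be an integer leads, via the period of $L_k$ modulo $3$, to $3\mid L_k(2r)$ if and only if $r$ is odd and $3\nmid k$. Next I will use the identity
\[
F_k(n)^2+F_k(n+2r)^2-L_k(2r)F_k(n)F_k(n+2r)=(-1)^nF_k(2r)^2
\]
with $n=N-r$, which gives
\[
m=\frac{L_k(2r)^2}{9}+(-1)^{N-r}F_k(2r)^2 .
\]
Using $L_k(2r)^2=(k^2+4)F_k(2r)^2+4$, the case $N-r$ odd yields
\[
m=\frac{(k^2-5)F_k(2r)^2+4}{9}.
\]
This is positive precisely when $k\ge3$; for $k=1,2$ it is $\le 0$, with equality at $r=1$. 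So for $k\in\{1,2\}$ we need $N-r$ even, i.e.\ $N$ odd because $r$ is odd. When $N-r$ is even, $m>0$ always.

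\textbf{Step 4: the converse.} Conversely, given $r$, $k$ and $N$ as in the statement, the same identities show three things. First, $m>0$. Second, $3ab-c=F_k(N-3r)>0$, so the triple is non-minimal. Third, the triple is ordered, since $\alpha_{k,r}\approx\phi^{2r}/3\le F_k(N-r)$ when $N>3r$. This completes the equivalence.
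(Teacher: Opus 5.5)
Your proposal shares the paper's core idea: trap $a$ just above $c/(3b)$ and read $c/b=L_k(d)\mp F_k(p-d)/F_k(p)$ off Lemma~\ref{lem:lucas}. It organises the argument differently and more economically, but the deferred cases $d=1$, $k\in\{1,2\}$ are not finite checks and still need the arguments given in the second paragraph.

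The paper derives the trap from the Fibonacci-free uniqueness result of Section~\ref{sec:nonminimals} ($a_q<a<a_p$ with $a_p-a_q<b/c$). It then compares $a$ with $L_k(\ell)/3$, using that $L_k(\ell)/3$ lies at distance exactly $1/3$ from the nearest integer unless $\ell\equiv 2\pmod 4$. That argument needs $k^\ell\ge 4$, so the paper treats separately the cases $3\mid k$, $k^\ell<4$ (via external classifications) and $k=1$ (three further lemmas). You get $0<3a-c/b<2b/c$ in one line from the co-root $3ab-c$ and $c(3ab-c)=a^2+b^2-m$. By working with $3a$ you defer all mod-$3$ information to the single condition $3\mid L_k(2r)$, which rules out $r$ even and $3\mid k$ at once. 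For even $d$ you obtain $3a=L_k(2r)$ uniformly in $k$, because $2b/c\le 1$. Your identities $3ab-c=F_k(N-3r)$ and $m=L_k(2r)^2/9+(-1)^{N-r}F_k(2r)^2$ also settle minimality and the sign of $m$ more directly than Lemma~\ref{lem:ordered_nonminimal_boundary} and Proposition~\ref{prop2.5}. In exchange, the paper's route gives a uniqueness statement for fixed $(b,c)$ that holds beyond the Fibonacci setting.

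The point that needs more than ``by hand'' is the odd-$d$ case. Because you only use $3a\in\ZZ$, it requires $1-F_k(p-d)/F_k(p)\ge 2F_k(p)/F_k(p+d)$ for every $p\ge d$.
\begin{itemize}
\item This holds when $k^d\ge 3$, since both ratios are at most $k^{-d}$.
\item It holds when $k=1$, $d\ge 3$, since both ratios are at most $1/3$ because $F_1(n+3)\ge 3F_1(n)$.
\item It fails for $d=1$, $k\in\{1,2\}$, and these are infinite families in $p$.
\end{itemize}
For $k=1$, use $3\mid 3a$: with $t=c/b\in[1,2]$, the chain $3-t\le 3a-t<2/t$ forces $(t-1)(t-2)>0$, a contradiction. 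For $k=2$, Step~1 alone cannot work, because $(1,2,5)$ and $(1,5,12)$ satisfy it. You must first observe that $a=1$ (as $c/b\le 5/2$). Then compute $m=1+(-1)^p-F_2(p)F_2(p-1)\le 0$ from $c^2-2bc-b^2=(-1)^p$.

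Likewise, the case $p<d$ is not merely ``small $p$'' a priori. It is excluded by $F_k(p+d)<3F_k(p)^2$ together with Lemma~\ref{lemma:F(c)<=3F(n)F(m)}, as in Lemma~\ref{lem:n>=l}. The only exception is $k=1$, $p=1$, which leaves a finite check.
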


The triples appearing in Theorem~\ref{the:nonminimal_general} naturally give rise to branches in the corresponding \(m\)-trees. More precisely, we call
\[
\BB\bigl(\alpha_{k,r},\,F_k(N-r),\,F_k(N+r)\bigr)
\]
a \emph{principal \((2,k)\)-Fibonacci branch}.
Our second result describes how these principal branches are distributed among distinct $m$-trees.

\begin{theorem}
\label{prop:branches}
Fix an odd integer \(r\ge1\) and assume \(3\nmid k\).
Then the family of Markoff triples
\[
\bigl(\alpha_{k,r},\,F_k(\ell),\,F_k(\ell+2r)\bigr), \qquad \ell\in\NN,\ 
\]
with
\[
\begin{cases}
\ell>2r, & \text{if } k>2,\\[3pt]
\ell\geq 2r \text{ and }\ell\ \text{even}, & \text{if } k\in\{1,2\},
\end{cases}
\]
is distributed among exactly \(2r\) principal $(2,k)$-Fibonacci branches, described as follows:
\begin{enumerate}
\item For each even \(\ell_0\in\{2,4,\dots,2r\}\), the triple
\((F_k(\ell_0),\alpha_{k,r},F_k(\ell_0+2r))\) is minimal. The corresponding branch is
\[
\BB\big(\nu_1(F_k(\ell_0),\alpha_{k,r},F_k(\ell_0+2r))\big)
=
\BB\big(\alpha_{k,r},F_k(\ell_0+2r),F_k(\ell_0+4r)\big).
\]

\item For each odd \(\ell_0\in\{1,3,\dots,2r-1\}\) and \(k\ge4\), the triple
\(\nu_3(F_k(\ell_0),\alpha_{k,r},F_k(\ell_0+2r))\) is minimal. The corresponding branch is
\[
\BB\big(\nu_1(F_k(\ell_0),\alpha_{k,r},F_k(\ell_0+2r))\big)
=
\BB\big(\alpha_{k,r},F_k(\ell_0+2r),F_k(\ell_0+4r)\big).
\]
\end{enumerate}
\end{theorem}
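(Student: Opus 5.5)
The argument rests on the identity $F_k(n+2r)+F_k(n-2r)=L_k(2r)F_k(n)$, where $L_k$ is the companion Lucas sequence. Since $r$ is odd, $3\alpha_{k,r}=F_k(4r)/F_k(2r)=L_k(2r)$. The $\nu_2$-step applied to $(\alpha_{k,r},F_k(\ell),F_k(\ell+2r))$ then reads
\[
3\alpha_{k,r}F_k(\ell+2r)-F_k(\ell)=L_k(2r)F_k(\ell+2r)-F_k(\ell)=F_k(\ell+4r).
\]
So $\nu_2$ sends $(\alpha_{k,r},F_k(\ell),F_k(\ell+2r))$ to $(\alpha_{k,r},F_k(\ell+2r),F_k(\ell+4r))$, and each principal branch consists exactly of the triples with a fixed residue $\ell \bmod 2r$. (When $\ell>2r$ these triples are ordered, since $\alpha_{k,r}\approx L_k(2r)/3<F_k(\ell)$ for large indices.)

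Backtracking by $\nu_3$, which reverses $\nu_2$ while the order is preserved, decreases $\ell$ by $2r$. Every admissible triple therefore descends within its residue class to the unique representative $\ell_0\in\{1,\dots,2r\}$, and its branch is $\BB(\alpha_{k,r},F_k(\ell_0+2r),F_k(\ell_0+4r))$. When $k\in\{1,2\}$, Theorem~\ref{the:nonminimal_general} forces $\ell+r=N$ to be odd, so $\ell$ is even and only the even residues occur. For $k\ge4$, and for $k>2$ in general given $3\nmid k$, all $2r$ residues occur. I would check separately that the parity restriction on $N$ in Theorem~\ref{the:nonminimal_general} is exactly what excludes odd $\ell_0$ for $k\le2$.

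Next I would identify the roots. At $\ell=\ell_0\le2r$ we have $F_k(\ell_0)$ and $\alpha_{k,r}$ to compare. For even $\ell_0$, I would show that $(F_k(\ell_0),\alpha_{k,r},F_k(\ell_0+2r))$ is ordered and minimal, i.e.
\[
F_k(\ell_0+2r)\ge 3F_k(\ell_0)\alpha_{k,r}=F_k(\ell_0)L_k(2r).
\]
The identity above gives $F_k(\ell_0)L_k(2r)=F_k(\ell_0+2r)+F_k(\ell_0-2r)$, and $F_k(-n)=(-1)^{n+1}F_k(n)$. For even $\ell_0<2r$ the term $F_k(\ell_0-2r)=-F_k(2r-\ell_0)$ is non-positive, so minimality holds; the case $\ell_0=2r$ gives equality $c=3ab$, which is still minimal. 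The ordering $F_k(\ell_0)\le\alpha_{k,r}$ should follow from a crude size estimate $\alpha_{k,r}\approx k^{2r}/3$. Applying $\nu_1$ then gives $(\alpha_{k,r},F_k(\ell_0+2r),3\alpha_{k,r}F_k(\ell_0+2r)-F_k(\ell_0))=(\alpha_{k,r},F_k(\ell_0+2r),F_k(\ell_0+4r))$, which is the claimed equality of branches.

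For odd $\ell_0$ the term $F_k(\ell_0-2r)$ is positive, so $(F_k(\ell_0),\alpha_{k,r},F_k(\ell_0+2r))$ is not minimal. Its $\nu_3$-image $\operatorname{ord}(F_k(2r-\ell_0),F_k(\ell_0),\alpha_{k,r})$ must instead be shown minimal, i.e. $\alpha_{k,r}\ge3F_k(2r-\ell_0)F_k(\ell_0)$. I expect this inequality to be the main obstacle. I would use $F_k(a)F_k(b)\le F_k(a+b)/k$-type bounds, together with $\alpha_{k,r}=L_k(2r)/3$ and $L_k(2r)\ge kF_k(2r)$, to reduce it to $L_k(2r)\ge 9F_k(a)F_k(b)$ with $a+b=2r$; this should hold once $k\ge4$.

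Finally, distinctness: the $2r$ roots are distinct minimal triples, hence by the result of \cite{SC} they lie in $2r$ distinct trees, and the branches, being in different residue classes, are distinct.
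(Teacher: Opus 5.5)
Your arguments for (1) and (2) are correct, and they are more direct than the paper's. The paper never writes $\nu_3$ in closed form.

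\begin{itemize}
\item For even $\ell_0$ it writes $\mm(F_k(\ell_0),\alpha_{k,r},F_k(\ell_0+2r))=F_k(\ell_0)^2+\alpha_{k,r}^2+F_k(\ell_0+2r)\Phi(\ell_0)$ with $\Phi(\ell_0)=F_k(\ell_0+2r)-3\alpha_{k,r}F_k(\ell_0)$. It then uses that $\mm$ is constant on even $\ell_0$ (Proposition~\ref{thm:familia_par_impar}) and anchors at $\Phi(2r)=0$.
\item For odd $\ell_0$ it expands the minimality condition into a quantity $g(\ell_0)$. Using \eqref{eq:sum}, \eqref{eqn:fibonacci1} and D'Ocagne, it reduces this to $L_k(2r)\ge 9F_k(\ell_0)F_k(2r-\ell_0)$.
\end{itemize}

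Your single identity $F_k(n+2r)+F_k(n-2r)=L_k(2r)F_k(n)$ replaces both computations. You need it at negative indices via $F_k(-n)=(-1)^{n+1}F_k(n)$; Binet gives this at once, whereas Lemma~\ref{lem:lucas} is only stated for $a\ge b$. It yields
\[
3\alpha_{k,r}F_k(\ell_0)-F_k(\ell_0+2r)=(-1)^{\ell_0+1}F_k(2r-\ell_0).
\]
This handles both parities together and gives the exact value $c-3ab=F_k(2r-\ell_0)$ in the even case. In the odd case it gives the explicit $\nu_3$-image and the same final inequality as the paper, which your bounds \eqref{eq:bound_product} and \eqref{eq:basic_bound} settle because $k^2\ge 9$.

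One slip: $F_k(\ell_0)\le\alpha_{k,r}$ fails for $k\in\{1,2\}$, $\ell_0=2r$, $r\ge3$. For example $F_1(6)=8>6=\alpha_{1,3}$, which is why the root there is $(6,8,144)$. This is harmless, since $c\ge 3ab$ is symmetric in $a,b$ and forces $c$ to be the largest entry, but drop the ``crude estimate''.

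Your last paragraph, however, contains a false step. For odd $\ell_0$ you found the $\nu_3$-image $\ord(F_k(2r-\ell_0),F_k(\ell_0),\alpha_{k,r})$. This is invariant under $\ell_0\mapsto 2r-\ell_0$, so once $r\ge 3$ the odd roots are not distinct.

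Take $k=4$, $r=3$, where $\alpha_{4,3}=1926$. Both $\ell_0=1$ and $\ell_0=5$ lead to the same minimal triple $(1,305,1926)$. Its two children, $\nu_2(1,305,1926)=(1,1926,5473)$ and $\nu_1(1,305,1926)=(305,1926,1762289)$, start the two branches.

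So the odd residues give only $(r+1)/2$ trees, and the $2r$ branches do not lie in $2r$ distinct trees. The even and odd families are not even in the same equation, since their values of $m$ differ by Proposition~\ref{thm:familia_par_impar}.

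The statement only asserts $2r$ distinct branches, and your residue-class argument already proves that. Drop the appeal to distinct minimal triples.

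Finally, for $k\in\{1,2\}$ only the $r$ even residues occur, so the count $2r$ is correct only for $k\ge 4$. Your write-up should say so explicitly rather than refer to ``the $2r$ roots'' for every $k$.
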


Finally, we prove that these principal branches exhaust all
possible infinite paths with at least two \(k\)-Fibonacci components.

\begin{theorem}\label{thm:only_principal}
Let $m>0$ and $k\ge1$. Every infinite path of Markoff $m$-triples
with at least two $k$-Fibonacci components is contained in a principal $(2,k)$-Fibonacci branch.
\end{theorem}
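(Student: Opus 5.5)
Let $\{(a_n,b_n,c_n)\}_{n\ge0}$ be an infinite path in which every triple has at least two $k$-Fibonacci components. The plan is to show that from some index on, the last two entries $b_n,c_n$ are both $k$-Fibonacci, that all later moves are $\nu_2$-steps, and then to invoke Theorem~\ref{the:nonminimal_general}.

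\emph{Step 1: the path has only $\nu_2$-steps eventually.} Suppose $\nu_1$ occurs infinitely often. Each $\nu_1$ step drops $a$ and keeps the current $c$ as the new middle entry. The smallest entry then grows without bound. Indeed, $a_{n+1}\ge a_n$ always, and after a $\nu_1$ step $a_{n+1}=b_n>a_n$. Strictness $a<b<c$ holds for all but finitely many triples once $m>0$ is fixed, since equal entries force $m$ to be a square-type expression that bounds the triple. So eventually all three entries are large, and at least two of them are $k$-Fibonacci numbers $F_k(u)$, $F_k(v)$. Write $\gamma=(k+\sqrt{k^2+4})/2$, so that $F_k(n)\approx\gamma^n/\sqrt{k^2+4}$. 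Substituting into $\mm(a,b,c)=m$ and reducing modulo the third entry, or comparing sizes, we get two cases. If the Fibonacci entries are $b,c$, then $3ab-c=c'$ with $c'<b$. If they include $a$, then $3F_k(u)F_k(v)$ must be very close to the third entry. Dividing by the largest term, $\mm=m$ forces
\[
a^2+b^2 - c(3ab-c)=m,
\]
and the lower-order term $(3ab-c)$ must equal the previous entry. Using this, I would show that a $\nu_1$ step from $(a,b,c)$ with $b,c$ Fibonacci produces $(b,c,3bc-a)$. Here $3bc-a$ is Fibonacci only if $a$ is nearly $3bc-F_k(w)$, and the identity $F_k(s)F_k(t)$ expansions (Binet) force $3F_k(s)F_k(t)=F_k(w)+$(small). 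That is impossible for large indices unless $k\equiv0\pmod3$-type divisibility holds, which contradicts $m>0$ (it gives $m=0$ by the classical $k$-Fibonacci branch computations of \cite{Gom,ACMRS25b}). Hence after some $n_0$ every step is $\nu_2$.

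\emph{Step 2: identify the branch.} For $n\ge n_0$ we have $a_n=a$ fixed, and $b_{n+1}=c_n$, $c_{n+1}=3ac_n-b_n$. Among $a,b_n,c_n$ two are Fibonacci. Since $a$ is fixed while $b_n,c_n\to\infty$ along a linear recurrence with ratio the root of $x^2-3ax+1$, infinitely many indices have $b_n,c_n$ both Fibonacci, or $a$ and one of $b_n,c_n$. In the latter case the Fibonacci entry among $b_n,c_n$ alternates. A linear recurrence sequence with characteristic root $\rho\neq\gamma^j$ meets the $k$-Fibonacci sequence only finitely often (a standard linear-forms-in-logarithms / Binet comparison). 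So infinitely many consecutive pairs $(c_n,c_{n+1})=(F_k(p),F_k(q))$ occur. This gives a non-minimal triple with last two entries Fibonacci. Theorem~\ref{the:nonminimal_general} then yields $(a,b_n,c_n)=(\alpha_{k,r},F_k(N-r),F_k(N+r))$, and the whole tail lies in $\BB(\alpha_{k,r},F_k(N-r),F_k(N+r))$. The finitely many earlier triples are obtained by $\nu_3$ back along the same branch. Since $\nu_3$ of a branch element with index $\ell>2r$ is again in it, down to the root described in Theorem~\ref{prop:branches}, the full path is contained in a principal branch.

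\emph{Main obstacle.} The hardest part is the Diophantine step excluding $\nu_1$ moves and the mixed case, namely showing that $3F_k(s)F_k(t)-F_k(w)$ cannot stay bounded (equal to a fixed previous entry) infinitely often. I would first try exact Binet expansions: $3F_k(s)F_k(t)=\tfrac{3}{k^2+4}(L_k(s+t)-(-1)^tL_k(s-t))$. Requiring this to be within $O(1)$ of $F_k(w)$ forces $w=s+t$ and $3/(k^2+4)\cdot\sqrt{k^2+4}=1$, which fails. I would then fall back on Baker-type bounds only if the bounded defect case needs them.
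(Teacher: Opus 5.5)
\textbf{There is a genuine gap, in Step 1.} Step 1 carries the entire difficulty of the theorem, and the argument there does not address it.

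After a $\nu_1$-step from $(a,F_k(s),F_k(t))$, the new triple $(F_k(s),F_k(t),3F_k(s)F_k(t)-a)$ already has two $k$-Fibonacci entries. So nothing forces $3bc-a$ to be $k$-Fibonacci. The constraint only appears one step later, and it involves the non-Fibonacci entry.

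The configuration that really has to be excluded is the alternating path
\[
(F_k(u_n),x_n,F_k(v_n)),\quad (F_k(u_n),F_k(v_n),x_{n+1}),\quad (F_k(u_{n+1}),x_{n+1},F_k(v_{n+1})),\ \dots
\]
with $x_n$ never $k$-Fibonacci. The paper isolates it after Lemma~\ref{lemma:no_a}, using Theorem~\ref{the:nonminimal_general} and $\nu_3$-backtracking. The first move is forced to be $\nu_2$. The second move may be either $\nu_1$ (so $u_{n+1}=v_n$) or $\nu_2$ (so $u_{n+1}=u_n$), so $\nu_1$ can occur infinitely often.

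In this path the defect $3F_k(u_n)F_k(v_n)-x_{n+1}=x_n$ is not bounded. It is a middle entry along the path, so it tends to infinity, and it has size about $F_k(v_n-u_n)$. Hence the bounded-defect problem ``$3F_k(s)F_k(t)-F_k(w)=O(1)$'' in your last paragraph is not the relevant one, and your Binet computation says nothing about this path.

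The claimed contradiction with $m>0$ is also unfounded. For example, $(1,F_1(2n),F_1(2n+2))$ with $m=2$ and $(2,F_2(2n),F_2(2n+2))$ with $m=8$ are infinite families with three $k$-Fibonacci entries.

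Step 2 has a smaller problem. Alternate terms of $s_{j+1}=3as_j-s_{j-1}$ being $k$-Fibonacci is excluded only if $\rho^2$ and $\phi_k$ (your $\gamma$) are multiplicatively independent, not merely if $\rho\neq\phi_k^{j}$. You leave the dependent case untreated.

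\textbf{The missing idea} is the paper's size argument. It handles $u_n$ bounded and $u_n\to\infty$ at once, without first eliminating $\nu_1$-steps.
\begin{enumerate}
\item Non-minimality, together with the sign of $\mm$ at neighbouring Fibonacci values, traps $x_n$ and $x_{n+1}$ between consecutive $k$-Fibonacci numbers (Lemma~\ref{lemma:b_bound}). This yields the index relation $v_{n+1}-u_{n+1}=u_n+v_n+1$ (resp.\ $u_n+v_n-1$ for $k\ge3$), and $v_n-u_n\to\infty$.
\item Take $\Delta=-\frac12$ for $k\le2$ and $\Delta=\frac12$ for $k\ge3$. The limit $L_{k,A}(\Delta)$ is nonzero because $\sqrt{\phi_k}\notin\mathbb{Q}(\phi_k)$, since $\phi_k$ has norm $-1$. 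Consequently $x_n$ stays on one fixed side of $\phi_k^{v_n-u_n+\Delta}/D_k$, by a factor $\phi_k^{\varepsilon}$, for all large $n$.
\item Vieta gives $x_nx_{n+1}=F_k(u_n)^2+F_k(v_n)^2-m\sim\phi_k^{2v_n}/D_k^2$. The index relation instead makes $x_nx_{n+1}$ larger or smaller than this by a factor of at least $\phi_k^{2\varepsilon}$, which is a contradiction.
\end{enumerate}
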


This paper is organised as follows. In Section~\ref{sec:nonminimals} we prove a general uniqueness result for ordered non-minimal Markoff $m$-triples with fixed last two components. Section~\ref{section:1} gathers the basic properties of $k$-Fibonacci and $k$-Lucas numbers used throughout the paper. In Section~\ref{section:2} we introduce and study principal $(2,k)$-Fibonacci branches, determine when triples of the form $(a,F_k(n-r),F_k(n+r))$ define Markoff $m$-triples with positive parameter $m$, and prove Theorem~\ref{the:nonminimal_general}. In Section~\ref{section:5} we describe how these branches are distributed among distinct $m$-trees, thereby proving Theorem~\ref{prop:branches}. Section~\ref{section:6} shows that no other infinite paths with at least two $k$-Fibonacci components can occur, which yields Theorem~\ref{thm:only_principal}. Finally, Section~\ref{section:examples} contains several examples illustrating the main results.

\section{Uniqueness of non-minimal Markoff $m$-triples $(a,b,c)$ with fixed $b,c$}
\label{sec:nonminimals}

In this section, we prove a general uniqueness statement for non-minimal
Markoff $m$-triples when the last two components are fixed.
This auxiliary result will be used in Section~\ref{section:2}.

 Let $(a,b,c)$ be a Markoff $m$-triple that is not minimal, meaning $c < 3ab$. We define \( a_q = \frac{c}{3b} \). Let $a_p$ be the smaller solution in $x$ of the equation $\mm(x,b,c)=0,$
with $b$ and $c$ fixed.

\begin{remark}
    \label{rmk:usefulnotes} Note that $a_q=\frac{c}{3b}$ provides the threshold between minimal and non-minimal
Markoff $m$-triples with fixed last two components.  Indeed, a Markoff $m$-triple
$(x,b,c)$ is minimal if and only if $c\ge 3xb$, which is equivalent to $x\le a_q$. Moreover, it is easy to show that $a_q < c$ and $a_p \le c$.
\end{remark}

\begin{lemma}
\label{lemma:mlessthanp}
The inequality $a_q < a_p$ holds.
\end{lemma}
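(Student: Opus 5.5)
We work with the quadratic $f(x)=\mm(x,b,c)=x^2-3bc\,x+(b^2+c^2)$, with $b,c$ fixed as the last two entries of the non-minimal $m$-triple $(a,b,c)$. Its roots are
\[
\frac{3bc\pm\sqrt{9b^2c^2-4(b^2+c^2)}}{2}.
\]
The discriminant is positive for $b,c\ge1$ except in degenerate small cases, so $a_p$ is the smaller root. The vertex of $f$ sits at $x=3bc/2$.

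The plan is to locate $a_q=c/(3b)$ relative to the roots by the sign of $f$. Since $a_q\le c<3bc/2$, the point $a_q$ lies to the left of the vertex. So $a_q<a_p$ holds exactly when $f(a_q)>0$, and we compute
\[
f(a_q)=\frac{c^2}{9b^2}-c^2+b^2+c^2=\frac{c^2}{9b^2}+b^2>0.
\]
This already gives $a_q<a_p$, provided $a_p$ is real. Non-strict positivity is not an issue, since $f(a_q)$ is strictly positive.

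The one point needing care is reality of $a_p$. This is where we use the hypothesis that a non-minimal $m$-triple with $m>0$ exists. We have $f(a)=m>0$ with $a>a_q$. The statement only makes sense when $a_p$ is real, i.e. $9b^2c^2\ge4(b^2+c^2)$, which holds whenever $bc\ge2$. We check the boundary case $b=c=1$ separately: there $a_p$ is undefined, and the lemma is presumably applied only when $\mm(x,b,c)=0$ has real roots. The paper's definition of $a_p$ implicitly assumes this.

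With the roots real, $a_q$ lies left of the vertex and $f(a_q)>0$, so $a_q$ lies outside the interval between the roots on the left, giving $a_q<a_p$. The main obstacle is only this bookkeeping about the discriminant; the inequality itself is the one-line evaluation of $f(a_q)$ above.
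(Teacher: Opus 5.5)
Your core argument is correct and is essentially the paper's. You evaluate $\mm(x,b,c)$ at $a_q=c/(3b)$ to get $c^2/(9b^2)+b^2>0$, and then use that $a_q$ lies left of the vertex $3bc/2$, where the quadratic is strictly decreasing. There is one small difference. The paper puts both $a_q$ and $a_p$ in $(-\infty,c]$, relying on the claim $a_p\le c$ from Remark~\ref{rmk:usefulnotes}. You only use that the smaller root lies left of the vertex, which is automatic, so your version is slightly cleaner.

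Your discussion of whether $a_p$ is real, however, is mistaken and should be replaced. The discriminant is never a problem. For $b,c\ge 1$ one has $\mm(3bc/2,b,c)=b^2+c^2-\tfrac94 b^2c^2<0$, or equivalently $9b^2c^2-4b^2-4c^2\ge 5c^2-4>0$, so both roots are always real. In particular, for $b=c=1$ the quadratic is $x^2-3x+2=(x-1)(x-2)$ and $a_p=1$. This is exactly the case the paper computes in the proof of Proposition~\ref{prop:smallerinterval}, so it is not degenerate and must not be set aside as a case where the lemma is ``presumably'' not applied. Your appeal to $f(a)=m>0$ also says nothing about whether the roots are real. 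With that one-line discriminant bound in place of the caveat, your proof is complete.
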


\begin{proof}
The function \( \mm(x, b, c) \) is a quadratic in \( x \) with its vertex at \( x = \frac{3}{2}bc \). Since \( \frac{3}{2}bc > c \), the vertex lies to the right of \( c \), so the function is strictly decreasing for \( x \leq c \). We note that both $a_q$ and $a_p$ are smaller than or equal to $c$ and, consequently, in order to prove that \( a_q < a_p \), it suffices to show:
\[
\mm(a_q, b, c) > \mm(a_p, b, c) = 0.
\]

\noindent We compute
\[
\mm\!\left( \frac{c}{3b}, b, c \right) = \left( \frac{c}{3b} \right)^2 + b^2 + c^2 - 3 \cdot \frac{c}{3b} \cdot b \cdot c = \left( \frac{c}{3b} \right)^2 + b^2.
\]
This expression is clearly positive, hence
\[
\mm(a_q, b, c) > 0 = \mm(a_p, b, c),
\]
which implies \( a_q < a_p \), as desired.
\end{proof}
\medskip

\begin{proposition}
\label{prop:smallerinterval}
If \( (a, b, c) \) is a non-minimal Markoff $m$-triple, then
\[
a_p - a_q < \frac{b}{c}.
\]

\end{proposition}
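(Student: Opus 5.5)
The plan is to obtain an exact formula for $a_p-a_q$ from Vieta's relations for the quadratic $\mm(x,b,c)=x^2-3bc\,x+(b^2+c^2)$, and then bound it using the two size constraints available for a non-minimal triple. Throughout, $(a,b,c)$ is ordered ($a\le b\le c$), as in the setting of \cite{SC}; this ordering is used essentially.

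\textbf{Step 1 (exact expression).} Let $a_p'=3bc-a_p$ be the larger root, so that $a_p a_p'=b^2+c^2$. Then
\[
a_p-a_q=\frac{b^2+c^2}{a_p'}-\frac{c}{3b}
=\frac{3b(b^2+c^2)-c\,(3bc-a_p)}{3b\,a_p'}
=\frac{3b^3+c\,a_p}{3b\,a_p'} .
\]
Substituting $a_p'=3bc-a_p$, the inequality $a_p-a_q<\frac{b}{c}$ becomes $c(3b^3+ca_p)<3b^2(3bc-a_p)$. This is equivalent to
\[
a_p\left(c+\frac{3b^2}{c}\right)<6b^3 .
\]

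\textbf{Step 2 (upper bound for $a_p$).} As in the proof of Lemma~\ref{lemma:mlessthanp}, $\mm(x,b,c)$ is strictly decreasing for $x\le c$. I will show $a_p<\frac{2c}{3b}$. Since $\frac{2c}{3b}\le c$, it suffices to check $\mm\bigl(\tfrac{2c}{3b},b,c\bigr)<0$. A direct computation gives
\[
\mm\!\left(\tfrac{2c}{3b},b,c\right)=\frac{4c^2}{9b^2}+b^2-c^2 ,
\]
which is negative whenever $c^2\bigl(1-\tfrac{4}{9b^2}\bigr)>b^2$. This holds when $c>b$. The boundary case $c=b$ has to be dispatched separately: there $m=a^2+2b^2-3ab^2$ with $a\ge1$ forces small values, and I would check directly that such triples with $m>0$ still satisfy the claim.

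\textbf{Step 3 (use non-minimality).} Non-minimality gives $c<3ab\le 3b^2$ because $a\le b$. Combining this with Step 2 and $c\ge b$,
\[
a_p\left(c+\frac{3b^2}{c}\right)<\frac{2c^2}{3b}+\frac{2b}{1}\cdot\frac{b}{c}\cdot\frac{c}{b}
\le \frac{2\cdot 9b^4}{3b}+2b = 6b^3\cdot\tfrac{1}{1}\cdot 1 \;\text{(up to the term $2b$)} .
\]
The main obstacle is exactly here: the crude estimate yields $6b^3+2b$, so the slack has to be found. I would recover it from the strict inequality $c<3ab$ together with integrality, which gives $c\le 3b^2-1$ and hence $\frac{2c^2}{3b}\le 6b^3-4b+\frac{2}{3b}$. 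This would leave a margin of about $2b$, enough to absorb the term $\frac{3b^2}{c}\cdot a_p<2b$.

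If that margin turns out to be too tight, my fallback is to refine Step 2. Since $\mm(a_q,b,c)=\frac{c^2}{9b^2}+b^2$ is tiny relative to the slope $3bc-2a_p\ge c(3b-2)$, convexity gives $a_p\le a_q+\varepsilon$ with $\varepsilon$ small. Using $a_p\approx\frac{c}{3b}$ instead of $\frac{2c}{3b}$ then halves the dominant term and leaves ample room.
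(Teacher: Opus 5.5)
Your argument is correct, but it takes a different route from the paper's.

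\textbf{The paper's route.} The paper evaluates $\mm$ directly at the test point $a_q+\frac{b}{c}$ and gets
$\frac{c^2}{9b^2}+\frac{b^2}{c^2}+\frac23-2b^2$. It shows this is negative using $\frac{c}{3b}<a\le b\le c$. The only exceptions are $a=b=1$, i.e.\ the triples $(1,1,1)$ and $(1,1,2)$, both with $m=0$, which are checked by hand. There is also a trivial preliminary case when $a_q+\frac{b}{c}\ge c$.

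\textbf{Your route.} You use Vieta's relations to write $a_p-a_q=\frac{3b^3+ca_p}{3b(3bc-a_p)}$ exactly, which reduces the claim to $a_p(c^2+3b^2)<6b^3c$. You then close it with the crude bound $a_p<\frac{2c}{3b}$ (valid for $c>b$) together with integrality, $c\le 3ab-1\le 3b^2-1$. This first line already suffices, so the fallback is unnecessary:
$a_p\bigl(c+\frac{3b^2}{c}\bigr)<\frac{2c^2}{3b}+2b\le 6b^3-4b+\frac{2}{3b}+2b<6b^3$.

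\textbf{What each buys.} The paper's single evaluation is shorter and uses essentially only $c<3ab\le 3b^2$. Your estimate genuinely needs the integer gap $c\le 3b^2-1$: if $c$ could approach $3b^2$, the crude bound would overshoot by about $2b$. In exchange, your exact formula shows how much room there is. You also trade the paper's exceptional case $a=b=1$ and its preliminary split for the single exceptional case $c=b$.

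\textbf{Two small repairs.} First, the displayed chain in your Step~3 is not a valid equation as written (it ends with ``$=6b^3$ up to the term $2b$''); replace it with the chain above.

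Second, settle the case $c=b$ for $m\ge0$, not only $m>0$. The proposition concerns Markoff $m$-triples with $m\ge 0$, and the paper's own exceptional triples have $m=0$. This is immediate: for $b=c$ one has $a_q=\frac13$ and $a_p=\frac{4b^2}{3b^2+\sqrt{9b^4-8b^2}}<\frac43$, so $a_p-a_q<1=\frac{b}{c}$. Alternatively, $m=a^2-(3a-2)b^2\ge 0$ with $1\le a\le b$ forces $(a,b,c)=(1,1,1)$.
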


\begin{proof}
If \( a_q + \frac{b}{c} \geq c \), then the inequality \( a_p - a_q < \frac{b}{c} \) holds trivially, since \( a_p \leq c \) and \( a_q < c \). Otherwise, by a similar argument to that of Lemma \ref{lemma:mlessthanp}, it suffices to show that
\[
\mm\!\left(a_q + \frac{b}{c},\, b,\, c\right) < \mm(a_p, b, c) = 0.
\]
Indeed, we compute:
\begin{align*}
\mm\!\left(a_q + \frac{b}{c}, b, c\right)
&= \left( \frac{c}{3b} + \frac{b}{c} \right)^2 + b^2 + c^2 - 3bc\left( \frac{c}{3b} + \frac{b}{c} \right) \\
&= \left( \frac{c}{3b} \right)^2 + \frac{b^2}{c^2} + \frac{2}{3} + b^2 + c^2 - 3bc \left( \frac{c}{3b} + \frac{b}{c} \right) \\
&= \left( \frac{c}{3b} \right)^2 + \frac{b^2}{c^2} + \frac{2}{3} - 2b^2.
\end{align*}

\noindent Since \( (a, b, c) \) is non-minimal and ordered, then $a>\frac{c}{3b}$ and $b \le c$, so that
 \[\mm\!\left(a_q+\frac{b}{c}, b, c\right) < a^2+\frac{5}{3}-2b^2 < 0,\]
where the last inequality always holds except when \( a = b = 1 \). For the triple $(1,1,c)$ to be non-minimal, it follows that $c<3ab=3$. We check both cases, $c=1$ and $c=2$:
\begin{itemize}
  \item If $(a,b,c)=(1,1,1)$, then
    $a_q=\dfrac13$, $a_p$ is the left root of $x^{2}-3x+2=(x-1)(x-2)$, hence
    $a_p=1$.  
    Therefore $a_p-a_q=\dfrac23<\dfrac{b}{c}=1$.

  \item If $(a,b,c)=(1,1,2)$, then
    $a_q=\dfrac23$, $a_p$ is the left root of $x^{2}-6x+5=(x-1)(x-5)$, hence
    $a_p=1$.  
    Therefore $a_p-a_q=\dfrac13<\dfrac{b}{c}=\dfrac12$.
\end{itemize}
\end{proof}
\medskip 

\begin{corol}
\label{propr:unicity}
Let \( a_q = \frac{c}{3b} \), and let \( a_p \) be the left root of the parabola \( \mm(x, b, c) = 0 \). Then, for any fixed pair \( (b, c) \) of positive integers, with $b \le c$, the open interval \( (a_q, a_p) \) contains at most one integer \( a^* \). If such an integer exists, then \( (a^*, b, c) \) is the unique ordered non-minimal Markoff \( m \)-triple with $m>0$ and second and third entries equal to \( b \) and \( c \), respectively.
\end{corol}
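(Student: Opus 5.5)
The proof has two parts: the interval $(a_q,a_p)$ is too short to hold two integers, and every ordered non-minimal $m$-triple with $m>0$ and last entries $b,c$ must have its first entry in that interval.

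\emph{At most one integer.} By Lemma~\ref{lemma:mlessthanp} the interval $(a_q,a_p)$ is nonempty. If some ordered non-minimal Markoff $m$-triple with last entries $b,c$ exists, Proposition~\ref{prop:smallerinterval} gives $a_p-a_q<\frac{b}{c}\le 1$, using $b\le c$. An open interval of length less than $1$ contains at most one integer. If no such triple exists, the uniqueness statement is vacuous, but the bound on the number of integers still needs an argument. I will derive it from the localisation below: every integer $x\in(a_q,a_p)$ produces a non-minimal $m$-triple $(x,b,c)$ with $m=\mm(x,b,c)>0$. So if the interval contained any integer, Proposition~\ref{prop:smallerinterval} would apply to that triple and force length $<1$. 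This closes the loop.

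\emph{Localisation.} Let $(a,b,c)$ be an ordered non-minimal Markoff $m$-triple with $m>0$. Non-minimality means $c<3ab$, i.e.\ $a>a_q$, as in Remark~\ref{rmk:usefulnotes}. Ordered means $a\le b\le c$, so $a\le c$. By the proof of Lemma~\ref{lemma:mlessthanp}, $x\mapsto \mm(x,b,c)$ is strictly decreasing on $(-\infty,c]$, and it vanishes at $a_p\le c$. Hence
\[
\mm(a,b,c)=m>0=\mm(a_p,b,c)
\]
forces $a<a_p$. Thus $a\in(a_q,a_p)$, so $a=a^*$, which gives uniqueness.

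\emph{Existence.} Conversely, let $a^*$ be an integer in $(a_q,a_p)$. Then $a^*>a_q$ gives non-minimality, and $a^*<a_p$ with monotonicity gives $\mm(a^*,b,c)>0$. Positivity of $a^*$ follows from $a_q>0$. Ordering requires $a^*\le b$. I expect this to be the main obstacle. My plan is to show that $a_p\le b$ whenever $b\le c$. Evaluating the quadratic at $x=b$ gives
\[
\mm(b,b,c)=2b^2+c^2-3b^2c.
\]
When $c\ge 3b^2$ this is positive, and then $b$ lies left of $a_p$. In that case, however, $a_q=\frac{c}{3b}\ge b$, so the interval has no integers $\le b$ anyway. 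I would handle this case split carefully.

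If this route fails, I would read the corollary's claim of ordering as conditional: the unique ordered triple, if any, is $(a^*,b,c)$. That reading needs only the localisation above.
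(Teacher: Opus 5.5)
The localisation and monotonicity steps are fine. The localisation in fact supplies the uniqueness argument that the paper's short proof leaves implicit. The step you use to ``close the loop'', however, fails.

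\textbf{The gap.} Proposition~\ref{prop:smallerinterval} is about an \emph{ordered} non-minimal triple, and its proof uses $a\le b$ essentially. Combined with $a>\frac{c}{3b}$, this gives $c<3b^2$, which is exactly what yields $\mm(a_q+\frac bc,b,c)<\frac53-b^2<0$. An integer $x\in(a_q,a_p)$ need not satisfy $x\le b$, so you cannot feed $(x,b,c)$ back into the proposition.

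In fact the unconditional first sentence of the corollary is false. Take $(b,c)=(1,35)$. Then $a_q=\frac{35}{3}$, and
\[
\mm(12,1,35)=110,\qquad \mm(13,1,35)=30,\qquad \mm(14,1,35)=-48 .
\]
So both $12$ and $13$ lie in $(a_q,a_p)$. The same example refutes your planned lemma that $a_p\le b$ whenever $b\le c$, since here $a_p>13$.

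The paper's proof carries the same implicit hypothesis. It simply invokes the proposition, which is legitimate only when an ordered non-minimal triple with last entries $b,c$ is already given. That is the situation in Lemma~\ref{lemma:nonminimalFib}, where the corollary is actually used.

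\textbf{The repair.} Finish the case split you began.

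If $c\ge 3b^2$, then $a_q\ge b$. Every integer in the interval exceeds $b$, and no ordered non-minimal triple with these last entries exists. The correct claim in this case is ``at most one integer $\le b$'', which holds vacuously.

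If $c<3b^2$, the computation in the proof of Proposition~\ref{prop:smallerinterval} goes through without assuming any triple exists. It only needs $\frac{c}{3b}<b$ and $b\le c$; for $b=1$ this forces $c\in\{1,2\}$, which is checked by hand. Hence $a_p-a_q<\frac bc\le 1$. Moreover, $\mm(b,b,c)=c^2-3b^2c+2b^2$ is convex in $c$. It equals $3b^2(1-b)\le 0$ at $c=b$ and $1-b^2\le 0$ at $c=3b^2-1$. Therefore $\mm(b,b,c)\le 0$, so $a_p\le b$, and any integer in the interval gives an ordered triple.

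With this, your localisation and monotonicity arguments prove the corrected statement. Your conditional fallback reading also needs more than localisation alone. To exclude a second integer $\le b$, you must apply the proposition to the given ordered triple, as in the first sentence of your proposal.
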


\begin{proof}
By Lemma~\ref{lemma:mlessthanp}, the interval \( (a_q, a_p) \) is non-empty. Since we are assuming that $b \le c$, Proposition \ref{prop:smallerinterval} yields that $a_p-a_q < 1$ and thus the interval $(a_q, a_p)$ contains at most one integer.

Moreover, since \( \mm(x, b, c) \) is strictly decreasing on this interval, any integer \( a^* \in (a_q, a_p) \) satisfies
\[
\mm(a^*, b, c) > \mm(a_p, b, c) = 0,
\]
and hence defines a non-minimal Markoff \( m \)-triple. 
\end{proof}
\medskip

\section{\texorpdfstring{$k$-Fibonacci and $k$-Lucas numbers}{Preliminaries on k-Fibonacci numbers and k-Lucas numbers}}\label{section:1}

In this section, we include several useful results related to the $k$-Fibonacci and $k$-Lucas numbers. Many of these have been previously proved in the literature, and we will refer the reader to the relevant sources when needed.

Throughout this paper, we denote
$$
\phi_k = \frac{k + \sqrt{k^2 + 4}}{2} \quad \text{and} \quad \overline{\phi}_k = \frac{k - \sqrt{k^2 + 4}}{2},
$$
so that for each $n\ge 0$, the $n$-th $k$-Fibonacci number can be written through Binet's formula as
\begin{equation}\label{eqn:binet}
F_k(n)=\frac{\phi_k^{\,n}-\overline{\phi}_k^{\,n}}{\sqrt{k^2+4}}\, .
\end{equation}
Since $|\overline{\phi}_k|<1$, it follows immediately from \eqref{eqn:binet} that
\begin{equation}\label{eqn:usefulasympt}
\lim_{n\to\infty}\frac{\phi_k^n}{D_kF_k(n)}=1,
\end{equation}
where, for convenience, we write
\[
D_k=\sqrt{k^2+4}.
\]

\noindent The following lemma and corollary are  proved  in  \cite[Lemma 2.1 and Corollary 2.2]{ACMRS25b}.

\begin{lemma}[Generalization of Vajda's Identity for $k$-Fibonacci numbers] \label{lemma:Vajda} For any integers $a, b$, $k$ and $n$, the following identity holds: $$F_k(n+a)F_k(n+b)-F_k(n)F_k(n+a+b)=(-1)^nF_k(a)F_k(b).$$
\end{lemma}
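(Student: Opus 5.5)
We prove Vajda's identity through Binet's formula \eqref{eqn:binet}, which turns both sides into exponential sums in $\phi_k$ and $\overline{\phi}_k$. The key fact is that $\phi_k\overline{\phi}_k=-1$, since $\phi_k,\overline{\phi}_k$ are the roots of $x^2-kx-1$.

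First we fix the meaning of $F_k$ at negative indices, since the statement allows arbitrary integers $a,b,n$. We extend the recurrence backwards via $F_k(n-2)=F_k(n)-kF_k(n-1)$. Then we check that Binet's formula remains valid for all $n\in\ZZ$, because both sides satisfy the same second-order recurrence and agree at $n=0,1$. This also gives $F_k(-n)=(-1)^{n+1}F_k(n)$, although we will not need it.

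Write $\phi=\phi_k$, $\psi=\overline{\phi}_k$ and $D=D_k$. Multiply the left-hand side by $D^2$:
\[
(\phi^{n+a}-\psi^{n+a})(\phi^{n+b}-\psi^{n+b})-(\phi^{n}-\psi^{n})(\phi^{n+a+b}-\psi^{n+a+b}).
\]
The terms $\phi^{2n+a+b}$ and $\psi^{2n+a+b}$ cancel. What remains is
\[
-\phi^{n+a}\psi^{n+b}-\psi^{n+a}\phi^{n+b}+\phi^n\psi^{n+a+b}+\psi^n\phi^{n+a+b}.
\]
Factor out $(\phi\psi)^n=(-1)^n$. This gives
\[
(-1)^n\bigl(\phi^{a+b}+\psi^{a+b}-\phi^a\psi^b-\psi^a\phi^b\bigr)=(-1)^n(\phi^a-\psi^a)(\phi^b-\psi^b).
\]
The right-hand side equals $(-1)^nD^2F_k(a)F_k(b)$. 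Dividing by $D^2$ yields the identity.

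There is no real obstacle here. The only point needing care is justifying Binet's formula and $(\phi\psi)^n=(-1)^n$ for negative exponents. This is immediate because $\phi,\psi$ are nonzero and the extended sequence obeys the same recurrence.
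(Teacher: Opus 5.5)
Your proof is correct. The Binet expansion, the cancellation of the $\phi_k^{2n+a+b}$ and $\overline{\phi}_k^{\,2n+a+b}$ terms, and the factorisation after pulling out $(\phi_k\overline{\phi}_k)^n=(-1)^n$ all check out.

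The paper gives no proof of its own for this lemma; it cites Lemma 2.1 of the authors' earlier work (ACMRS25b). So there is nothing internal to compare against, and your argument is the standard self-contained Binet proof.

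Your extension of $F_k$ to negative indices is a worthwhile addition, since the paper's statement allows arbitrary integers. That extension gives $F_k(-1)=1$, which is exactly what the paper silently uses in the proof of Lemma~\ref{lemma:klsmall}.

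The argument also covers any integer $k$, as the statement literally asserts. The reason is that $k^2+4>0$, so $x^2-kx-1$ always has two distinct real roots with product $-1$, and Binet's formula \eqref{eqn:binet} holds for every integer $k$.
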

\medskip

\begin{corol}\label{cor:identities}
The following identities hold for any positive integers $a,b,n:$
\begin{gather}
    F_k(a+b)=F_k(a+1)F_k(b)+F_k(a)F_k(b-1), \label{eq:sum}\, \\
    F_k(a)\leq \frac{1}{k} F_k(a+1), \label{eq:basic_bound}\, \\
    F_k(a) F_k(b)\leq F_k(a+b-1),  \label{eq:bound_product}\, \\
    F_k(a+b-1)\le F_k(a)F_k(b)\left(1+\frac{1}{k^2}\right),   \label{eq:bound_product2}\, \\
    \text{(D’Ocagne; $b\ge a$)}\ \,\,(-1)^aF_k(b-a) = F_k(b)F_k(a+1)-F_k(b+1)F_k(a),\label{eq:DOcagne}\,\\
    (\text{Catalan}) \,\,F_k(n)^2=F_k(n+r)F_k(n-r)+(-1)^{n-r}F_k(r)^2 ,\label{eq:Catalan}\,\\
    (\text{Simson}) \,\,F_k(n)^2=F_k(n+1)F_k(n-1)-(-1)^{n} \label{eq:Simson}\,.
\end{gather}
Moreover, equality holds in the following cases:
\begin{enumerate}
    \item The equality in \eqref{eq:basic_bound} is only attained if $a=1$.
    \item The equality in \eqref{eq:bound_product} is only attained if either $a=1$ or $b=1$.
    \item The equality in \eqref{eq:bound_product2} is only attained if $a=b=2$.
\end{enumerate}
\end{corol}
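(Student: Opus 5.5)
All identities in Corollary~\ref{cor:identities} will be derived from Lemma~\ref{lemma:Vajda} and the recurrence \eqref{df:k_fibonacci}. The inequalities will then follow from \eqref{eq:sum} and the positivity of $F_k(n)$ for $n\ge1$, with $F_k(0)=0$.

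\textbf{The three classical identities.} These are direct specialisations of Lemma~\ref{lemma:Vajda}.
\begin{itemize}
\item For D'Ocagne \eqref{eq:DOcagne}, take Vajda with $n\mapsto a$, first shift $b-a$ and second shift $1$. This gives
$F_k(b)F_k(a+1)-F_k(a)F_k(b+1)=(-1)^aF_k(b-a)F_k(1)$, and $F_k(1)=1$.
\item For Catalan \eqref{eq:Catalan}, replace $n$ by $n-r$ and take both shifts equal to $r$. This gives $F_k(n)^2-F_k(n-r)F_k(n+r)=(-1)^{n-r}F_k(r)^2$.
\item Simson \eqref{eq:Simson} is the case $r=1$ of Catalan, using $(-1)^{n-1}=-(-1)^n$.
\end{itemize}

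\textbf{The addition formula \eqref{eq:sum}.} I will prove it by induction on $b$ with $a$ fixed.
\begin{itemize}
\item For $b=1$ the right-hand side is $F_k(a+1)\cdot1+F_k(a)\cdot0$, which is correct.
\item For $b=2$ the right-hand side is $kF_k(a+1)+F_k(a)=F_k(a+2)$, which is correct.
\item For the inductive step, the right-hand side satisfies the same recurrence in $b$ as the left-hand side, because both $F_k(b)$ and $F_k(b-1)$ do. So agreement for two consecutive values of $b$ propagates to all $b$.
\end{itemize}

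\textbf{The inequalities.}
\begin{itemize}
\item For \eqref{eq:basic_bound}, write $F_k(a+1)=kF_k(a)+F_k(a-1)\ge kF_k(a)$. Equality holds exactly when $F_k(a-1)=0$, that is, when $a=1$.
\item For \eqref{eq:bound_product}, apply \eqref{eq:sum} with $a$ replaced by $a-1$ (when $a\ge2$) to get
\[
F_k(a+b-1)=F_k(a)F_k(b)+F_k(a-1)F_k(b-1)\ge F_k(a)F_k(b).
\]
Equality holds iff $F_k(a-1)F_k(b-1)=0$, i.e. iff $a=1$ or $b=1$. The case $a=1$ is checked directly, since both sides equal $F_k(b)$.
\item For \eqref{eq:bound_product2}, start from the same decomposition. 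Apply \eqref{eq:basic_bound} twice, which gives $F_k(a-1)\le F_k(a)/k$ and $F_k(b-1)\le F_k(b)/k$. Hence $F_k(a-1)F_k(b-1)\le F_k(a)F_k(b)/k^2$.
\end{itemize}

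\textbf{Main obstacle: the equality cases of \eqref{eq:bound_product2}.}
\begin{itemize}
\item If $a=1$ or $b=1$, the extra term $F_k(a-1)F_k(b-1)$ vanishes while $F_k(a)F_k(b)/k^2>0$, so the inequality is strict.
\item If $a,b\ge2$, equality forces equality in both applications of \eqref{eq:basic_bound}. By the first equality case this means $a-1=1$ and $b-1=1$, i.e. $a=b=2$.
\item Conversely, for $a=b=2$ we have $F_k(3)=k^2+1$ and $F_k(2)^2(1+1/k^2)=k^2+1$, so equality does hold.
\end{itemize}
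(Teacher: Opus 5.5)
Your proof is correct and complete. That includes the equality analysis: for $a,b\ge 2$ the positivity of $F_k(a-1)$ and $F_k(b-1)$ is exactly what forces equality in both uses of \eqref{eq:basic_bound} when \eqref{eq:bound_product2} is an equality. The paper gives no argument of its own here; it defers to \cite[Corollary 2.2]{ACMRS25b}, and, as the labelling indicates, these facts are obtained there as consequences of the generalised Vajda identity, which is the route you take.

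The one step you could shorten is \eqref{eq:sum}. It is itself a specialisation of Lemma~\ref{lemma:Vajda} with $n=1$ and shifts $a$ and $b-1$, which gives $F_k(a+1)F_k(b)-F_k(a+b)=-F_k(a)F_k(b-1)$ directly. Your induction on $b$ is valid but unnecessary.
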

\medskip
\noindent The following lemma provides a lower bound for the product of two $k$-Fibonacci numbers, depending on the value of $k \ge 1$, and was proved in \cite[Lemma 2.2]{ACMRS25a} and \cite[Lemma 2.5]{ACMRS25b}.
\begin{lemma}
\label{lemma:F(c)<=3F(n)F(m)}
Let $1\leq a \leq b\leq c$ be integers. Then
\begin{enumerate}
\item[\textup{(i)}] \textbf{Case $k=1$.} 
\begin{enumerate}
\item[\textup{(a)}] If $a,b\ge 2$, then $F_{1}(c)\le 3\,F_{1}(a)F_{1}(b) \ \text{if and only if}\ c\le a+b, \\
F_{1}(c)> 3\,F_{1}(a)F_{1}(b) \ \text{if and only if}\ c\ge a+b+1,$
and equality holds if and only if $(a,b,c)=(2,2,4)$.
\item[\textup{(b)}]  $
F_{1}(c)\le 3\,F_{1}(1)F_{1}(1) \ \text{if and only if}\ c\le 4, \\
F_{1}(c)> 3\,F_{1}(1)F_{1}(1) \ \text{if and only if}\ c\ge 5,
$
and equality holds if and only if $c=4$.
\item[\textup{(c)}] If $a=1$ with $b\ge 2$, then
$F_{1}(c)\le 3\,F_{1}(1)F_{1}(b) \ \text{if and only if}\ c\le a+b+1; \,\,
F_{1}(c)> 3\,F_{1}(1)F_{1}(b) \ \text{if and only if}\ c\ge a+b+2,
$
and equality holds if and only if $(a,b,c)=(1,2,4)$.
\end{enumerate}

\item[\textup{(ii)}] \textbf{Case $k=2$.} One has
\[
F_{2}(c)\le 3\,F_{2}(a)F_{2}(b) \ \text{if and only if}\ c\le a+b,
\]
and equality holds if and only if $(a,b,c)=(2,2,4)$.

\item[\textup{(iii)}] \textbf{Case $k\ge 3$.} One has
\[
F_{k}(c)\le 3\,F_{k}(a)F_{k}(b) \ \text{if and only if}\ c<a+b,
\]
and equality occurs only at $(a,b,c)=(1,1,2)$.
\end{enumerate}
\end{lemma}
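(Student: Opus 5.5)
We prove Lemma~\ref{lemma:F(c)<=3F(n)F(m)} by comparing $F_k(c)$ with $F_k(a+b)$ and $F_k(a+b-1)$. Since $F_k$ is strictly increasing for $n\ge 1$ (and $F_k(1)=F_k(2)$ when $k=1$), it is enough in each case to find the threshold index $c$ at which $F_k(c)$ overtakes $3F_k(a)F_k(b)$. The two tools are \eqref{eq:sum} and the product bounds \eqref{eq:bound_product}, \eqref{eq:bound_product2}.

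\textbf{Upper estimate.} Applying \eqref{eq:sum} twice and using \eqref{eq:basic_bound}, we write
\[
F_k(a+b)=F_k(a+1)F_k(b)+F_k(a)F_k(b-1)=kF_k(a)F_k(b)+F_k(a-1)F_k(b)+F_k(a)F_k(b-1).
\]
The last two terms are each at most $\frac1k F_k(a)F_k(b)$. Hence
\[
F_k(a+b)\le \left(k+\tfrac2k\right)F_k(a)F_k(b),
\]
with strict inequality as soon as $a,b\ge2$. A sharper form uses $F_k(a-1)\le F_k(a)/k$ together with the exact ratio estimates.

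\textbf{Lower estimate.} From $F_k(a+b)=F_k(a+1)F_k(b)+F_k(a)F_k(b-1)\ge kF_k(a)F_k(b)+\dots$ we get $F_k(a+b+1)\ge (k^2+1)F_k(a)F_k(b)$, using \eqref{eq:bound_product}-type manipulations.

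\textbf{Case $k\ge3$.} Since $k^2+1\ge 10>3$, the lower estimate gives $F_k(c)>3F_k(a)F_k(b)$ for $c\ge a+b+1$. For $c=a+b$ we have $F_k(a+b)\ge kF_k(a)F_k(b)+F_k(a)F_k(b-1)\ge 3F_k(a)F_k(b)$, with equality only when $k=3$ and $b=1$, forcing $a=1$. In that case $F_3(2)=3=3F_3(1)^2$, so equality occurs at $(1,1,2)$. For $c\le a+b-1$, \eqref{eq:bound_product2} gives $F_k(a+b-1)\le (1+1/k^2)F_k(a)F_k(b)<3F_k(a)F_k(b)$. The statement "iff $c<a+b$" has to be read with this boundary case $(1,1,2)$ at $k=3$ counted as equality.

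\textbf{Case $k=2$.} Here $k^2+1=5>3$, so $c\ge a+b+1$ gives strict excess. For $c=a+b$ we need $F_2(a+b)\le 3F_2(a)F_2(b)$, i.e.
\[
2F_2(a)F_2(b)+F_2(a-1)F_2(b)+F_2(a)F_2(b-1)\le 3F_2(a)F_2(b).
\]
This reduces to $\frac{F_2(a-1)}{F_2(a)}+\frac{F_2(b-1)}{F_2(b)}\le 1$. Each ratio is $\le 1/2$, with equality only at index $2$, giving equality exactly at $(2,2,4)$. The value $a=1$ makes its ratio $0$.

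\textbf{Case $k=1$.} This is the delicate case: $k^2+1=2<3$, so the lower estimate is too weak. We instead use $F_1(a+b+1)=F_1(a+1)F_1(b+1)+F_1(a)F_1(b)$, which follows from \eqref{eq:sum}. For $a,b\ge2$ we have $F_1(a+1)\ge \tfrac32F_1(a)$ when $a\ge 2$, but this ratio can be $2$ at $a=2$. So we verify $F_1(a+1)F_1(b+1)>2F_1(a)F_1(b)$ via ratios $F_1(n+1)/F_1(n)\in[3/2,2]$ for $n\ge2$, with product strictly exceeding $2$. At $c=a+b$ the same ratio reasoning gives $\frac{F(a-1)}{F(a)}+\frac{F(b-1)}{F(b)}\le 2$, which holds with room: it is equivalent to $F(a+b)\le 3F(a)F(b)$ with equality at $(2,2,4)$. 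Parts (b) and (c), with $a=1$, are handled directly, since $3F_1(b)$ lies between $F_1(b+2)$ and $F_1(b+3)$. This follows from $F(b+2)=2F(b)+F(b-1)\le3F(b)$ and $F(b+3)=3F(b)+2F(b-1)>3F(b)$ for $b\ge2$, together with the small checks for $b=1$.

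I expect the main obstacle to be the $k=1$ bookkeeping with the repeated value $F_1(1)=F_1(2)$ and the exact equality cases. The approach there is to reduce everything to ratio bounds on consecutive Fibonacci numbers and to check the finitely many small indices by hand.
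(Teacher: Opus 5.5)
The paper does not prove this lemma; it imports it from its two predecessor papers (one for $k=1$, one for $k\ge 2$). Your argument is a correct self-contained replacement. It is uniform in $k$: one addition-formula expansion and one lower bound. It also exposes a defect in part (iii).

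The following steps all check out, including the equality cases:
\begin{itemize}
\item the expansion $F_k(a+b)=kF_k(a)F_k(b)+F_k(a-1)F_k(b)+F_k(a)F_k(b-1)$;
\item the lower bound $F_k(a+b+1)=kF_k(a+b)+F_k(a+b-1)\ge (k^2+1)F_k(a)F_k(b)$;
\item the use of \eqref{eq:bound_product2} for $c\le a+b-1$ when $k\ge 3$;
\item the ratio bound $F_1(n+1)/F_1(n)\ge 3/2$ for $n\ge 2$;
\item the direct treatment of $a=1$ when $k=1$.
\end{itemize}
For the lower bound, write it out explicitly: it follows from $F_k(a+b)\ge kF_k(a)F_k(b)$ together with \eqref{eq:bound_product}. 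Your ``$\ge kF_k(a)F_k(b)+\dots$'' leaves this implicit.

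One claim is false as written. The estimate $F_k(a+b)\le (k+2/k)F_k(a)F_k(b)$ is \emph{not} strict for $a,b\ge 2$. By the equality case of \eqref{eq:basic_bound}, namely $F_k(1)=F_k(2)/k$, it is an equality exactly when $a=b=2$; indeed $F_k(4)=k^3+2k=(k+2/k)F_k(2)^2$. This does no damage, since you redo the equality analysis correctly via ratios.

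In fact the corrected version shortens your proof. For $k\in\{1,2\}$ one has $k+2/k=3$. So this single estimate already gives $F_k(a+b)\le 3F_k(a)F_k(b)$ with equality iff $a=b=2$. Your separate ratio computations at $c=a+b$ in those two cases are then unnecessary.

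Your caveat on (iii) is correct, and should be stated as a correction rather than a reading convention. For $k=3$, the triple $(a,b,c)=(1,1,2)$ has $c=a+b$ and $F_3(2)=3=3F_3(1)^2$, so ``$F_k(c)\le 3F_k(a)F_k(b)$ iff $c<a+b$'' fails there. The right statement is: $F_k(c)<3F_k(a)F_k(b)$ iff $c<a+b$, with equality only for $k=3$ at $(1,1,2)$.
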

\medskip

\noindent
As remarked after \eqref{df:k_fibonacci}, the $k$-Fibonacci numbers form a Lucas-type sequence.
We will also make use of its companion, the \emph{$k$-Lucas sequence} (see, e.g., \cite[Section~5.1]{book}), defined by

\begin{equation}\label{df:k_lucas}
\left\{
\begin{aligned}
L_k(0) &= 2, \\
L_k(1) &= k, \\
L_k(n) &= k L_k(n-1) + L_k(n-2), \quad \text{for } n \ge 2.
\end{aligned}
\right.
\end{equation}
\medskip

For us, it will be relevant to study when $F_k(\ell)$ and $L_k(\ell)$ are divisible by $3$. This is the content of the following lemma. 

    \begin{lemma}\label{Lucascongmod4}
    Let $F_k(\ell)$ and $L_k(\ell)$ be the $\ell$-th $k$-Fibonacci and Lucas numbers, respectively. The following are true:
    \begin{itemize}
        \item 
        \text{If }$3 \nmid k$, $\begin{cases}
            3 \mid F_k(\ell) \text{ if and only if } 4 \mid \ell, \\
            3 \mid L_k(\ell) \text{ if and only if } \ell \equiv 2 \Mod{4}.
        \end{cases}$
        \item If $3 \mid k$, $\begin{cases}
            3 \mid F_k(\ell) \text{ if and only if } 2 \mid \ell, \\
            3 \mid L_k(\ell) \text{ if and only if } 2 \nmid \ell.
        \end{cases}$
    \end{itemize}
\end{lemma}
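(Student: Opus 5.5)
The plan is to reduce both sequences modulo $3$ and use periodicity. Since $F_k(\ell)$ and $L_k(\ell)$ satisfy the recurrence $X_n = kX_{n-1}+X_{n-2}$, their residues modulo $3$ depend only on $k \bmod 3$ and on the two initial values. So it suffices to treat the three cases $k\equiv 0,1,2 \Mod{3}$ and compute the resulting purely periodic sequences explicitly. Pure periodicity holds because the recurrence is invertible modulo $3$: the coefficient of $X_{n-2}$ is $1$, so we can run it backwards as $X_{n-2}=X_n-kX_{n-1}$.

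\emph{Case $3\mid k$.} The recurrence becomes $X_n\equiv X_{n-2}\Mod{3}$.
\begin{itemize}
\item For $F_k$, we get $F_k(\ell)\equiv F_k(\ell \bmod 2)$, so the residues run $0,1,0,1,\dots$. Hence $3\mid F_k(\ell)$ exactly when $\ell$ is even.
\item For $L_k$, we get $L_k(0)=2$ and $L_k(1)=k\equiv 0$, so the residues run $2,0,2,0,\dots$. Hence $3\mid L_k(\ell)$ exactly when $\ell$ is odd.
\end{itemize}

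\emph{Case $k\equiv 1\Mod{3}$.} Iterating $X_n\equiv X_{n-1}+X_{n-2}$ from the initial values gives:
\begin{itemize}
\item $F_k$: residues $0,1,1,2,0,2,2,1$, then repeating with period $8$. The zeros occur exactly at $\ell\equiv 0\Mod{4}$.
\item $L_k$: residues $2,1,0,1,1,2,0,2$, then repeating with period $8$. The zeros occur exactly at $\ell\equiv 2\Mod{4}$.
\end{itemize}

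\emph{Case $k\equiv 2\Mod{3}$.} The recurrence is $X_n\equiv -X_{n-1}+X_{n-2}$. To avoid a third computation, I would observe that replacing $k$ by $-k$ changes $F_k(\ell)$ by the sign $(-1)^{\ell+1}$ and $L_k(\ell)$ by $(-1)^{\ell}$. This follows by a quick induction on $\ell$, or from Binet's formula \eqref{eqn:binet}, since the roots are replaced by their negatives. Sign changes do not affect divisibility by $3$, so this case reduces to $k\equiv 1\Mod{3}$. Alternatively, one can just list the residues: for $F_k$ they are $0,1,2,2,0,2,1,1$, and for $L_k$ they are $2,2,0,2,1,1,0,1$.

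Together the three cases give both items of the lemma. There is no real obstacle here. The only step requiring care is justifying that the residues are purely periodic from index $0$, so that the finite tables determine every $\ell$. This follows because a pair of consecutive residues determines the whole sequence both forwards and backwards, so once the initial pair recurs (at index $8$, or at index $2$ when $3\mid k$), the pattern repeats.
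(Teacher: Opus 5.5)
Your proof is correct, and it takes a more self-contained route than the paper. The paper invokes Renault's general theorem on the rank of apparition of second-order recurrences modulo a prime, which gives $\alpha(3)=4$ if $3\nmid k$ and $\alpha(3)=2$ if $3\mid k$. This yields the $F_k$ statements through the standard fact that $3\mid F_k(\ell)$ exactly when $\alpha(3)\mid\ell$. The paper then asserts that the $L_k$ statements ``follow analogously.''

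You instead compute the residue sequences modulo $3$ explicitly for each class of $k \bmod 3$. You justify pure periodicity by the recurrence of the initial pair. You shortcut the class $k\equiv 2$ with the symmetry $F_{-k}(\ell)=(-1)^{\ell+1}F_k(\ell)$, $L_{-k}(\ell)=(-1)^{\ell}L_k(\ell)$, which is valid since the recurrence makes sense for any integer parameter. Your period-$8$ tables and the period-$2$ patterns all check out.

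What each route buys:
\begin{itemize}
\item The paper's citation is shorter and places the result in a theory valid for every prime. However, it leaves the Lucas half unargued, and that half needs an extra step. For instance, $F_k(2\ell)=F_k(\ell)L_k(\ell)$ together with $L_k(\ell)^2-(k^2+4)F_k(\ell)^2=4(-1)^\ell$ shows that $3$ never divides both $F_k(\ell)$ and $L_k(\ell)$. This gives $3\mid L_k(\ell)$ if and only if $\alpha(3)\mid 2\ell$ and $\alpha(3)\nmid\ell$.
\item Your finite computation treats both sequences uniformly with no external input. The cost is that it is specific to the modulus $3$.
\end{itemize}
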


\begin{proof}
The statement for the $k$-Fibonacci numbers is an immediate consequence of general results on linear recurrence sequences modulo primes; see, for instance, \cite[Theorem~3]{Renault}. 
Indeed, applying that result with $(a,b)=(k,1)$ and $p=3$ yields $\alpha(3)=4$ when $3\nmid k$ and $\alpha(3)=2$ when $3\mid k$. 
The corresponding assertions for the $k$-Lucas numbers follow analogously.

\end{proof}

The following identity is classical in nature and relates $k$-Fibonacci numbers with $k$-Lucas numbers. A proof may be found for example, in \cite{book1} (as a special case of identity (2.14)), or, as a consequence of Theorem~2.4 in \cite{F}, together with equations~\eqref{eq:sum} and ~\eqref{eq:DOcagne}, to state two instances.

\begin{lemma}\label{lem:lucas}
For positive integers $a\geq b,$ the following identity holds:
\[
F_k(a+b) = F_k(a)\,L_k(b) - (-1)^b F_k(a-b).
\]
\end{lemma}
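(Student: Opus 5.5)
The statement to prove is Lemma~\ref{lem:lucas}: $F_k(a+b)=F_k(a)L_k(b)-(-1)^bF_k(a-b)$ for $a\ge b\ge 1$. I would prove it with Binet's formula \eqref{eqn:binet} together with the companion formula $L_k(n)=\phi_k^n+\overline{\phi}_k^n$. The Lucas formula is checked at once from \eqref{df:k_lucas}. At $n=0$ it gives $2$, and at $n=1$ it gives $\phi_k+\overline{\phi}_k=k$. Since $\phi_k,\overline{\phi}_k$ are the roots of $x^2=kx+1$, both $\phi_k^n$ and $\overline{\phi}_k^n$ satisfy the recurrence $u_n=ku_{n-1}+u_{n-2}$, so their sum does as well.

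The main computation is the expansion of
\[
D_k\,F_k(a)L_k(b)=(\phi_k^a-\overline{\phi}_k^a)(\phi_k^b+\overline{\phi}_k^b)
=\phi_k^{a+b}-\overline{\phi}_k^{a+b}+\phi_k^a\overline{\phi}_k^b-\overline{\phi}_k^a\phi_k^b.
\]
The first two terms equal $D_kF_k(a+b)$. For the cross terms I use $\phi_k\overline{\phi}_k=-1$, which gives $\phi_k^a\overline{\phi}_k^b=(-1)^b\phi_k^{a-b}$ and $\overline{\phi}_k^a\phi_k^b=(-1)^b\overline{\phi}_k^{a-b}$. This step needs $a\ge b$, and that is exactly why the hypothesis appears. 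The cross terms therefore sum to $(-1)^b(\phi_k^{a-b}-\overline{\phi}_k^{a-b})=(-1)^bD_kF_k(a-b)$. Dividing by $D_k$ and rearranging yields the identity.

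No step is a real obstacle. The only care needed is in tracking the sign $(-1)^b$ coming from $\phi_k\overline{\phi}_k=-1$, and in the boundary case $a=b$, where $F_k(0)=0$ is consistent with Binet's formula. As a sanity check, take $b=1$: the identity becomes $F_k(a+1)=kF_k(a)+F_k(a-1)$, which is the defining recurrence. If a purely combinatorial proof were preferred instead, I would use induction on $b$ via \eqref{eq:sum}, but the Binet route is shorter.
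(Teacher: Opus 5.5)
Your proof is correct but takes a different route from the paper. Your Binet argument works: with $L_k(n)=\phi_k^n+\overline{\phi}_k^n$, which you verify properly, and $\phi_k\overline{\phi}_k=-1$, the expansion gives exactly $D_kF_k(a+b)+(-1)^bD_kF_k(a-b)$, and your sign bookkeeping is right.

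The paper does not prove the lemma itself; it refers to the literature. One route it indicates stays at the level of integer identities. Add the addition formula \eqref{eq:sum}, $F_k(a+b)=F_k(a+1)F_k(b)+F_k(a)F_k(b-1)$, to D'Ocagne's identity \eqref{eq:DOcagne} with the roles of $a,b$ swapped, $(-1)^bF_k(a-b)=F_k(a)F_k(b+1)-F_k(a+1)F_k(b)$. The $F_k(a+1)F_k(b)$ terms cancel, leaving $F_k(a+b)+(-1)^bF_k(a-b)=F_k(a)\bigl(F_k(b+1)+F_k(b-1)\bigr)$. It then only remains to identify the bracket with $L_k(b)$. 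This is the same computation that produces \eqref{eq:lineal} in the proof of Proposition~\ref{thm:familia_par_impar}.

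What each buys: your approach is self-contained and needs only the closed form for $L_k$. The paper's route avoids irrational quantities. It also makes explicit the expression $L_k(b)=F_k(b+1)+F_k(b-1)$, which is the form in which Lucas numbers actually enter later, in the definition of $\alpha_{k,r}$ and in Lemma~\ref{lem:ordered_nonminimal_boundary}.

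One small correction: $\phi_k^a\overline{\phi}_k^b=(-1)^b\phi_k^{a-b}$ holds for all integers $a,b$, since $\phi_k,\overline{\phi}_k\neq0$. So that step does not itself need $a\ge b$. The hypothesis is needed only so that $F_k(a-b)$ has a nonnegative index, where it is defined by \eqref{df:k_fibonacci} and where Binet's formula \eqref{eqn:binet} is stated.
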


\section{Principal \texorpdfstring{$(2,k)$}{(2,k)}-Fibonacci branches}\label{section:2}

\subsection{Existence and positivity of $m$}

In this subsection, we study principal $(2,k)$-Fibonacci branches arising from Markoff $m$-triples of the form
$$
\left(a, F_k(n-r), F_k(n+r)\right).
$$
Our aim is to determine when such triples exist and when the corresponding Markoff parameter \(m\) is positive. For clarity, we distinguish between the cases \(3\mid k\) and \(3\nmid k\).

\subsubsection{\texorpdfstring{The case where $3 \mid k$}{The case where 3 divides k}}

In this case, we show that no such configurations can occur.

\begin{proposition}
Let $k$ and $m$ be positive integers with \(3 \mid k \) and $m> 0$. Then there are no branches of $m$-Markoff triples of the form \( (a, F_k(n-r), F_k(n+r)) \).
\end{proposition}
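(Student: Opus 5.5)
The plan is to show that closing a single $\nu_2$-step under the $k$-Fibonacci pattern forces $3a=L_k(2r)$, and then to use Lemma~\ref{Lucascongmod4} to see that $3\nmid L_k(2r)$ when $3\mid k$. The hypothesis $m>0$ will play no role; the obstruction is purely arithmetic.

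\textbf{Step 1: one $\nu_2$-step fixes the index shift.} Suppose a branch of Markoff $m$-triples has every member of the form $(a,F_k(n-r),F_k(n+r))$, with $a$ fixed (since $\nu_2$ never changes the first entry) and $r\ge 1$. Take one member $(a,F_k(n-r),F_k(n+r))$. Its image is
\[
\nu_2\bigl(a,F_k(n-r),F_k(n+r)\bigr)=\bigl(a,\,F_k(n+r),\,3aF_k(n+r)-F_k(n-r)\bigr).
\]
This image must again equal $(a,F_k(n'-r),F_k(n'+r))$ for some $n'$. Because $3\mid k$ forces $k\ge 3$, the sequence $F_k$ is strictly increasing on $n\ge 1$, hence injective there. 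So $F_k(n'-r)=F_k(n+r)$ gives $n'=n+2r$. The third components then yield
\[
3aF_k(n+r)-F_k(n-r)=F_k(n+3r).
\]

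\textbf{Step 2: rewrite with Lucas numbers.} Apply Lemma~\ref{lem:lucas} with the pair $(n+r,2r)$, which is legitimate since $n+r\ge 2r$ whenever $n\ge r$; this holds because $F_k(n-r)$ must be a positive component. Since $2r$ is even, the lemma gives
\[
F_k(n+3r)=F_k(n+r)L_k(2r)-F_k(n-r).
\]
Substituting into the relation from Step 1 and cancelling the $F_k(n-r)$ terms gives $3aF_k(n+r)=L_k(2r)F_k(n+r)$. As $F_k(n+r)>0$, this forces
\[
3a=L_k(2r).
\]

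\textbf{Step 3: contradiction via divisibility by $3$.} Since $3\mid k$ and $2r$ is even, Lemma~\ref{Lucascongmod4} says $3\nmid L_k(2r)$. Hence $a=L_k(2r)/3$ is not an integer, which is a contradiction, so no such branch exists.

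The only delicate point is Step 1: one must justify that the next triple in the branch has index shift exactly $2r$. This rests on the injectivity of $F_k$ for $k\ge 3$ and on the convention that the branch keeps the form $(a,F_k(\cdot-r),F_k(\cdot+r))$ with $a$ fixed. If the intended meaning is only that the initial triple has this form, I would instead apply the same computation to the first step of the branch. Alternatively, the argument goes through even if each triple is only assumed to have its last two entries $k$-Fibonacci with some possibly varying gap.
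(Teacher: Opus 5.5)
Your proof is correct, and it reaches the same key relation as the paper, $3a=L_k(2r)$, by a different mechanism.

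The paper never uses the fact that consecutive members are linked by $\nu_2$. It expands $\mm(a,F_k(n-r),F_k(n+r))$ via Binet's formula and argues that the coefficient of $\phi_k^{2n}$ must vanish, because $m$ stays fixed while $n$ runs through infinitely many values. This gives $3a=\phi_k^{2r}+\phi_k^{-2r}=(k^2+4)F_k(r)^2+2(-1)^r$, and the contradiction modulo $3$ then comes from the residues of $F_k(r)$ in Lemma~\ref{Lucascongmod4}.

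You instead take one $\nu_2$-step, fix the index shift at $2r$ by injectivity of $F_k$ (valid since $k\ge 3$), cancel with Lemma~\ref{lem:lucas}, and read off the residue of $L_k(2r)$ directly. This is the same number, since $\phi_k^{2r}+\phi_k^{-2r}=L_k(2r)$.

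Each version buys something different:
\begin{itemize}
\item Your version is finite and purely algebraic and needs only two consecutive members of the branch. It is essentially the converse reading of the identity $3\alpha_{k,r}F_k(\ell+2r)-F_k(\ell)=F_k(\ell+4r)$ that the paper proves later in Lemma~\ref{lemma:enganche}.
\item The paper's asymptotic version does not need the $\nu_2$-linkage at all. It therefore excludes any infinite family of $m$-triples $(a,F_k(n-r),F_k(n+r))$ with $a$, $r$, $m$ fixed. It also exhibits the value of $a$ that becomes $\alpha_{k,r}$ in the case $3\nmid k$.
\end{itemize}

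One small caveat concerns your closing claim that the argument survives a varying gap; your computation does not establish this. If the consecutive triples are $(a,F_k(p),F_k(q))$ and $(a,F_k(q),F_k(s))$ with $q-p\neq s-q$, Lemma~\ref{lem:lucas} no longer makes the $F_k(p)$ term cancel in Step~2, so an additional size argument would be needed. In the paper, that more general exclusion comes only through Lemmas~\ref{lemma_4.2} and~\ref{lemma:klsmall}. Since the statement does not require it, the remark is best omitted.
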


\begin{proof}
Suppose \( (a, F_k(n-r), F_k(n+r)) \) is a Markoff $m$-triple. Then it satisfies the identity
\[
m = a^2 + F_k(n-r)^2 + F_k(n+r)^2 - 3a F_k(n-r) F_k(n+r).
\]
By using Binet's formula \eqref{eqn:binet}, we compute
\begin{align*}
F_k(n \pm r)^2 &= \left( \frac{1}{\sqrt{k^2 + 4}} \right)^2 \left( \phi_k^{n \pm r} - \overline{\phi}_k^{\,\,n \pm r} \right)^2 \\
&= \frac{1}{k^2 + 4} \left( \phi_k^{2n \pm 2r} + \overline{\phi}_k^{\,\,2n \pm 2r} - 2(-1)^{n \pm r} \right), \\
F_k(n - r) F_k(n + r) &= \left( \frac{1}{\sqrt{k^2 + 4}} \right)^2 \left( \phi_k^{\,n-r} - \overline{\phi}_k^{\,\,n-r} \right) \left( \phi_k^{n+r} - \overline{\phi}_k^{\,\,n+r} \right) \\
&= \frac{1}{k^2 + 4} \left( \phi_k^{2n} + \overline{\phi}_k^{\,\,2n} - (-1)^{n-r}(\phi_k^{2r} + \overline{\phi}_k^{\,\,2r}) \right).
\end{align*}


Substituting these expressions in the Markoff identity, and multiplying through by \( (k^2 + 4) \), we obtain
\begin{align*}
(k^2 + 4) m &= (k^2 + 4) a^2 + \phi_k^{2n - 2r} + \overline{\phi}_k^{\,\,2n - 2r} + \phi_k^{2n + 2r} + \overline{\phi}_k^{\,\,2n + 2r} \\
&\quad - 2(-1)^{n - r} - 2(-1)^{n + r} - 3a \left( \phi_k^{2n} + \overline{\phi}_k^{\,\,2n} - (-1)^{n - r} (\phi_k^{2r} + \overline{\phi}_k^{\,\,2r}) \right).
\end{align*}
The right-hand side of this expression contains exponential terms in \( \phi_k^{2n} \) and \( \overline{\phi}_k^{\,\,2n} \), which grow unbounded as $n \to \infty$ unless their coefficients vanish. Since we are looking for solutions in a branch, there have to be infinitely many, and since $(k^2+4)m$ is bounded, it follows that the coefficients of \( \phi_k^{2n} \) and \( \overline{\phi}_k^{\,\,2n} \) must both vanish. In other words,
\[\phi_k^{2r} + \frac{1}{\phi_k^{2r}} - 3a = 0 \Longrightarrow \quad a = \frac{1}{3} \left( \phi_k^{2r} + \phi_k^{-2r} \right).
\]
Using Binet's formula again, we obtain:
\[
a = \frac{1}{3} \left( \phi_k^{2r} + \phi_k^{-2r} \right)
= \frac{1}{3} \left( (k^2 + 4) F_k(r)^2 + 2(-1)^r \right).
\]
Since $a$ is an integer, the numerator must be divisible by 3. Therefore, it follows that
\[
(k^2 + 4) F_k(r)^2 + 2(-1)^r \equiv 0 \pmod{3}.
\]
As we are assuming \( k \equiv 0 \pmod{3} \), it follows that \( k^2 + 4 \equiv 1 \pmod{3} \), and hence the previous identity reduces to
\begin{equation}
\label{eqn:congruencefibonacci}
F_k(r)^2  \equiv (-1)^r \pmod{3}.
\end{equation}
If $3 \mid k$, Lemma \ref{Lucascongmod4} implies that $F_k(r) \equiv 0 \pmod{3}$ if $2 \mid r$ and $F_k(r)^2 \equiv 1 \pmod{3}$ if $2 \nmid r$. This contradicts \eqref{eqn:congruencefibonacci}.

Therefore, in all cases, the condition for boundedness fails. Consequently, no such triple \( (a, F_k(n - r), F_k(n + r)) \) can form a branch of a Markoff $m$-triple when \( k \equiv 0 \pmod{3} \).
\end{proof}

\subsubsection{\texorpdfstring{The case where $3 \nmid k$}{The case where 3 does not divide k}}
We note that, as long as $3 \nmid k$, a similar argument to the one in Lemma \ref{Lucascongmod4} shows that, for each fixed $r$, the residue modulo $3$ of
\[
(F_{k}(r+1)+F_{k}(r-1))^2-2(-1)^r
\]
is independent of $k$. The resulting residues, which depend only on $r \bmod 4$, are listed in Table \ref{tab:mod3}.
\begin{table}[H]
\centering
\renewcommand{\arraystretch}{1.2}
\caption{Residues of $(F_{k}(r+1)+F_{k}(r-1))^{2}-2(-1)^{r} \pmod{3}$ when $3 \nmid k$}
\label{tab:mod3}
\begin{tabular}{|c|c|}
\hline
$r$ & $(F_{k}(r+1)+F_{k}(r-1))^{2}-2(-1)^{r}\pmod{3}$ \\
\hline
0 & 2 \\
1 & 0 \\
2 & 1 \\
3 & 0 \\
4 & 2 \\
5 & 0 \\
6 & 1 \\
7 & 0 \\
\hline
\end{tabular}
\end{table}

We define the rational number $\alpha_{k,r}$ by the expression
\begin{equation}\label{eq:deF_k(a)}
  \alpha_{k,r}:=\frac{(F_{k}(r+1)+F_{k}(r-1))^{2}-2(-1)^{r}}{3}.
\end{equation}

\begin{lemma}\label{lem:a_entero}
$\alpha_{k,r}$ is a positive integer if and only if $r$ is odd. 
\end{lemma}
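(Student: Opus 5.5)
The plan is to rewrite $\alpha_{k,r}$ in terms of $k$-Lucas numbers and then read off divisibility by $3$ from Lemma~\ref{Lucascongmod4}. Throughout, as in this subsection, $3\nmid k$ and $r\ge 1$.

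First, I will identify the numerator. Binet's formula gives $L_k(n)=\phi_k^n+\overline{\phi}_k^{\,n}$. Combined with $\phi_k\overline{\phi}_k=-1$, this yields
\[
F_k(r+1)+F_k(r-1)=\frac{\phi_k^{r}(\phi_k-\overline{\phi}_k)+\overline{\phi}_k^{\,r}(\phi_k-\overline{\phi}_k)}{D_k}=L_k(r),
\]
where I used $\phi_k^{r-1}=-\phi_k^{r}\overline{\phi}_k$ and $\overline{\phi}_k^{\,r-1}=-\overline{\phi}_k^{\,r}\phi_k$. Equivalently, both sides satisfy the same recurrence and agree for $r=1,2$. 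Squaring Binet's expression gives $L_k(r)^2=L_k(2r)+2(\phi_k\overline{\phi}_k)^r=L_k(2r)+2(-1)^r$. Hence
\[
\alpha_{k,r}=\frac{L_k(2r)}{3}.
\]
For use in the main theorems, I will also note that $F_k(2n)=F_k(n)L_k(n)$, which is immediate from Binet or from Lemma~\ref{lem:lucas} with $a=b$. This gives $\alpha_{k,r}=F_k(4r)/(3F_k(2r))$, matching the form used in Theorem~\ref{the:nonminimal_general}.

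Second, positivity is immediate. $L_k(n)$ is a positive integer for every $n\ge0$, since $L_k(0)=2$, $L_k(1)=k\ge1$ and the recurrence has positive coefficients. So $\alpha_{k,r}>0$ and $\alpha_{k,r}$ is rational. The only remaining question is integrality, that is, whether $3\mid L_k(2r)$.

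Third, by Lemma~\ref{Lucascongmod4} with $3\nmid k$, we have $3\mid L_k(2r)$ if and only if $2r\equiv 2\pmod 4$, which holds if and only if $r$ is odd. This proves both directions. As a consistency check, Table~\ref{tab:mod3} gives the same conclusion: the residue of the numerator modulo $3$ is $0$ exactly for odd $r$.

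I expect no real obstacle here. The only step needing care is the Lucas identity $L_k(r)^2=L_k(2r)+2(-1)^r$, including its sign. It is settled by the product $\phi_k\overline{\phi}_k=-1$.
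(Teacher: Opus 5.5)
Your proof is correct, but it is organised differently from the paper's. The paper argues by a direct residue computation. It asserts, justifying this only by analogy with Lemma~\ref{Lucascongmod4}, that for $3\nmid k$ the residue of $(F_k(r+1)+F_k(r-1))^2-2(-1)^r$ modulo $3$ is independent of $k$ and depends only on $r \bmod 4$. It records these residues in Table~\ref{tab:mod3} and reads off that the residue vanishes exactly for odd $r$. The identification $\alpha_{k,r}=L_k(2r)/3$ appears only afterwards, in Lemma~\ref{lemma:anitha}, via the addition formula and Simson's identity, and only for odd $r$.

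You instead first prove $F_k(r+1)+F_k(r-1)=L_k(r)$ and $L_k(r)^2=L_k(2r)+2(-1)^r$ from Binet's formula. This gives $\alpha_{k,r}=L_k(2r)/3$ for every $r$, so integrality becomes exactly the Lucas half of Lemma~\ref{Lucascongmod4} with $\ell=2r$. Your route reduces the claim to an already stated (cited) result rather than to a table whose entries are only sketched. It also treats positivity explicitly, which the paper leaves implicit, and it yields Lemma~\ref{lemma:anitha} without the oddness hypothesis. The paper's route is more hands-on, a finite periodicity check modulo $3$, and does not need the Lucas sequence at this stage.

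You were right to state the standing hypothesis $3\nmid k$. For $3\mid k$ the same reduction shows that $\alpha_{k,r}$ is never an integer, since then $3\mid L_k(\ell)$ only for odd $\ell$.
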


\begin{proof}
We observe that $\alpha_{k,r}$ is an integer if and only if the numerator is divisible by 3. That is,
\[
(F_{k}(r+1) + F_{k}(r-1))^2 - 2(-1)^r \equiv 0 \pmod{3}.
\]

\noindent According to Table~\ref{tab:mod3}, this congruence holds if and only if $r$ is odd.
\end{proof}

The following lemma shows that $\alpha_{k,r}$, defined in \eqref{eq:deF_k(a)}, can be expressed both as the quotient of two $k$-Fibonacci numbers and in terms of a $k$-Lucas number.

\begin{lemma}\label{lemma:anitha} Let $\alpha_{k,r}$ be as given in \eqref{eq:deF_k(a)}, with $r$ odd. Then $$\alpha_{k,r}=\frac{F_{k}(4r)}{3F_{k}(2r)} = \frac{L_{k}(2r)}{3}.$$
\end{lemma}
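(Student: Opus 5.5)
We prove the two equalities by rewriting everything in terms of $\phi_k$ and $\overline{\phi}_k$. Three elementary facts suffice:
\[
\phi_k\overline{\phi}_k=-1,\qquad \phi_k+\overline{\phi}_k=k,
\]
and the Binet-type formula
\[
L_k(n)=\phi_k^{\,n}+\overline{\phi}_k^{\,n}.
\]
The last one follows by induction from \eqref{df:k_lucas}, since both sides satisfy the same recurrence and agree at $n=0,1$.

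\textbf{Step 1: the Lucas form of the numerator.} We first show that $F_k(r+1)+F_k(r-1)=L_k(r)$. Writing both $F_k$ terms via \eqref{eqn:binet} gives
\[
\frac{\phi_k^{\,r-1}(\phi_k^2+1)-\overline{\phi}_k^{\,r-1}(\overline{\phi}_k^{\,2}+1)}{D_k}.
\]
Using $\phi_k^2+1=\phi_k(\phi_k-\overline{\phi}_k)=\phi_k D_k$ and $\overline{\phi}_k^{\,2}+1=\overline{\phi}_k(\overline{\phi}_k-\phi_k)=-\overline{\phi}_k D_k$, this collapses to $\phi_k^{\,r}+\overline{\phi}_k^{\,r}=L_k(r)$. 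Alternatively, the identity follows from the recurrences by induction on $r$.

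\textbf{Step 2: $\alpha_{k,r}=L_k(2r)/3$.} Squaring gives
\[
L_k(r)^2=\phi_k^{\,2r}+\overline{\phi}_k^{\,2r}+2(\phi_k\overline{\phi}_k)^r=L_k(2r)+2(-1)^r.
\]
Hence the numerator of \eqref{eq:deF_k(a)} is
\[
L_k(r)^2-2(-1)^r=L_k(2r),
\]
and so $\alpha_{k,r}=L_k(2r)/3$. This step holds for every $r$; the hypothesis that $r$ is odd only matters for integrality, via Lemma~\ref{lem:a_entero}, and is not needed for the identity itself.

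\textbf{Step 3: $\alpha_{k,r}=F_k(4r)/(3F_k(2r))$.} It remains to verify the doubling formula $F_k(4r)=F_k(2r)L_k(2r)$. In Binet form this reads
\[
\frac{\phi_k^{\,4r}-\overline{\phi}_k^{\,4r}}{D_k}=\frac{\phi_k^{\,2r}-\overline{\phi}_k^{\,2r}}{D_k}\,\bigl(\phi_k^{\,2r}+\overline{\phi}_k^{\,2r}\bigr),
\]
which is a difference of squares. It can also be obtained from Lemma~\ref{lem:lucas} with $a=b=2r$: since $F_k(0)=0$, that lemma gives $F_k(4r)=F_k(2r)L_k(2r)$ directly. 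Because $F_k(2r)>0$, we may divide, and the two expressions for $\alpha_{k,r}$ coincide.

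The only step that needs any care is Step 1, namely identifying $F_k(r+1)+F_k(r-1)$ with $L_k(r)$. Once that is in place, the remaining steps are routine algebra.
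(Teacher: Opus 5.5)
Your proof is correct, but it identifies the numerator of \eqref{eq:deF_k(a)} by a different mechanism than the paper.

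The paper avoids Binet's formula altogether. It applies the addition formula \eqref{eq:sum} with $a=b=2r$ to get $F_k(4r)/F_k(2r)=F_k(2r+1)+F_k(2r-1)$, and expands $F_k(2r\pm 1)$ as sums of two squares via \eqref{eq:sum} again. It then uses Simson's identity \eqref{eq:Simson} to replace $2F_k(r)^2$ by $2F_k(r+1)F_k(r-1)-2(-1)^r$. This turns $F_k(2r+1)+F_k(2r-1)$ into $(F_k(r+1)+F_k(r-1))^2-2(-1)^r$. Only at the end is Lemma~\ref{lem:lucas} invoked to pass to $L_k(2r)/3$.

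You instead work in closed form. You recognise $F_k(r+1)+F_k(r-1)=L_k(r)$, use $L_k(r)^2=L_k(2r)+2(-1)^r$ to see that the numerator is exactly $L_k(2r)$, and then recover the quotient $F_k(4r)/(3F_k(2r))$ from the doubling formula.

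Your route is shorter and more conceptual. Each step is a one-line computation using $\phi_k\overline{\phi}_k=-1$, it explains why a Lucas number appears, and it makes explicit that the parity of $r$ matters only for integrality, not for the identity. The paper's route stays within the integer identities of Corollary~\ref{cor:identities}, so it needs no irrational quantities and no separate Binet formula for $L_k$. It also produces the factorisation \eqref{eqn:fibonacci1}, $F_k(4r)=F_k(2r)\bigl(F_k(2r+1)+F_k(2r-1)\bigr)$, which is quoted again in the proof of Theorem~\ref{prop:branches}.
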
\begin{proof}
We begin by applying identity~\eqref{eq:sum} with \( a = b = 2r \), which gives
\begin{equation}
\label{eqn:fibonacci1}
  F_{k}(4r) = F_{k}(2r+1) F_{k}(2r) + F_{k}(2r) F_k(2r-1)
         = F_{k}(2r)(F_{k}(2r+1) + F_{k}(2r-1)).
\end{equation}
Dividing both sides by \( F_{k}(2r) \), we obtain
\[
  \frac{F_{k}(4r)}{F_{k}(2r)} = F_{k}(2r+1) + F_k(2r-1).
\]
\noindent Using identity~\eqref{eq:sum} again
\begin{align*}
  F_{k}(2r+1) &= F_k(r+1)^2 + F_k(r)^2 \quad \text{(with } a = r,\, b = r+1), \\
  F_k(2r-1) &= F_k(r)^2 + F_k(r-1)^2 \quad \text{(with } a = r-1,\, b = r).
\end{align*}
Thus,
\begin{equation}
\label{eqn:fibonacci2}
  F_{k}(2r+1) + F_{k}(2r-1) = F_{k}(r+1)^2 + 2F_{k}(r)^2 + F_{k}(r-1)^2.
\end{equation}
\noindent We now use the Simson identity~\eqref{eq:Simson} to express \( F_{k}(r)^2 \) as
\[
  F_{k}(r)^2 = F_{k}(r+1)F_{k}(r-1) - (-1)^r.
\]
Substituting into \eqref{eqn:fibonacci2}, we get
\begin{align*}
  F_{k}(2r+1) + F_{k}(2r-1)
  &= F_{k}(r+1)^2 + 2(F_{k}(r+1)F_{k}(r-1) - (-1)^r) + F_{k}(r-1)^2 \\
  &= F_{k}(r+1)^2 + 2F_{k}(r+1)F_{k}(r-1) + F_{k}(r-1)^2 - 2(-1)^r \\
  &= (F_{k}(r+1) + F_{k}(r-1))^2 - 2(-1)^r.
\end{align*}
\noindent Using  definition \eqref{eq:deF_k(a)}, we obtain
\[
  \frac{F_{k}(4r)}{F_{k}(2r)} = (F_{k}(r+1) + F_{k}(r-1))^2 - 2(-1)^r = 3\alpha_{k,r}.
\]
Therefore 
$\alpha_{k,r}=\frac{F_{k}(4r)}{3F_{k}(2r)}$, and this quotient is equal to  $\frac{L_{k}(2r)}{3}$ by Lemma \ref{lem:lucas}.
\end{proof}

We have already established that $r$ must be odd. 
We now prove that, for triples of the form $(\alpha_{k,r},F_k(n-r),F_k(n+r))$, 
the quantity $\mm(\alpha_{k,r},F_k(n-r),F_k(n+r))$ does not depend on $n$, except for a sign.

\begin{proposition}\label{thm:familia_par_impar}
Let $r$ be odd and $\alpha_{k,r}$ as in \eqref{eq:deF_k(a)}. Then, for all $n>r$,
\[
  \mm(\alpha_{k,r},F_{k}(n-r),F_{k}(n+r))=
  \alpha_{k,r}^{2}+(-1)^{n-r}F_{k}(r)^{2}(F_{k}(r+1)+F_{k}(r-1))^{2}.
\]
\end{proposition}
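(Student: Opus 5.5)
We will compute $\mm(\alpha_{k,r},F_k(n-r),F_k(n+r))$ directly with Binet's formula \eqref{eqn:binet}, reusing the expansions already obtained in the proof for the case $3\mid k$. Set $A=\phi_k^{2r}+\overline{\phi}_k^{\,2r}$. Since $\phi_k\overline{\phi}_k=-1$ and $r$ is odd, Binet's formula gives $A=L_k(2r)$, so Lemma~\ref{lemma:anitha} yields $3\alpha_{k,r}=A$.

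First, substitute the expressions for $F_k(n\pm r)^2$ and for $F_k(n-r)F_k(n+r)$ into $\mm$, and multiply by $D_k^2=k^2+4$. The terms involving $\phi_k^{2n}$ and $\overline{\phi}_k^{\,2n}$ coming from the squares are
\[
\phi_k^{2n}(\phi_k^{2r}+\phi_k^{-2r})+\overline{\phi}_k^{\,2n}(\overline{\phi}_k^{\,2r}+\overline{\phi}_k^{\,-2r}).
\]
Because $r$ is odd, $\phi_k^{-2r}=\overline{\phi}_k^{\,2r}$, so both coefficients equal $A$. The cross term contributes $-3\alpha_{k,r}(\phi_k^{2n}+\overline{\phi}_k^{\,2n})=-A(\phi_k^{2n}+\overline{\phi}_k^{\,2n})$, and these exponential terms cancel exactly.

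What remains is
\[
D_k^2\,\mm=D_k^2\alpha_{k,r}^2-2(-1)^{n-r}-2(-1)^{n+r}+3\alpha_{k,r}(-1)^{n-r}A .
\]
Using $(-1)^{n+r}=(-1)^{n-r}$, this becomes
\[
D_k^2\alpha_{k,r}^2+(-1)^{n-r}(A^2-4).
\]
Now $A^2-4=(\phi_k^{2r}-\overline{\phi}_k^{\,2r})^2$, because $(\phi_k\overline{\phi}_k)^{2r}=1$. Hence $A^2-4=D_k^2F_k(2r)^2$. Dividing by $D_k^2$ gives
\[
\mm=\alpha_{k,r}^2+(-1)^{n-r}F_k(2r)^2 .
\]

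It remains to identify $F_k(2r)$ with $F_k(r)\bigl(F_k(r+1)+F_k(r-1)\bigr)$. This follows from \eqref{eq:sum} with $a=b=r$: $F_k(2r)=F_k(r+1)F_k(r)+F_k(r)F_k(r-1)$. That gives the stated formula.

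The only delicate step is the sign bookkeeping. Oddness of $r$ is used twice: once to get $\phi_k^{-2r}=\overline{\phi}_k^{\,2r}$, and once, through Lemma~\ref{lemma:anitha}, to get $3\alpha_{k,r}=A$ exactly. It is also used to check that the constants $-2(-1)^{n\pm r}$ combine with $3\alpha_{k,r}(-1)^{n-r}A$ into $(-1)^{n-r}(A^2-4)$. The hypothesis $n>r$ only ensures that the indices are positive.
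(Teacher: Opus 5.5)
Your proof is correct, but it takes a different route from the paper's. The paper does not use Binet at this point. It multiplies the Catalan identity $F_k(n)^2-F_k(n-r)F_k(n+r)=(-1)^{n-r}F_k(r)^2$ by $(F_k(r+1)+F_k(r-1))^2$. It then eliminates $F_k(n)$ using the linear relation $(F_k(r+1)+F_k(r-1))F_k(n)=F_k(n+r)+(-1)^rF_k(n-r)$, which comes from \eqref{eq:sum} and \eqref{eq:DOcagne}. The cross-term coefficient $(F_k(r+1)+F_k(r-1))^2-2(-1)^r$ is then read off as $3\alpha_{k,r}$, and the constant appears directly in the factored form $F_k(r)^2(F_k(r+1)+F_k(r-1))^2$.

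You instead reuse the Binet expansions from the case $3\mid k$. You see that $3\alpha_{k,r}=\phi_k^{2r}+\overline{\phi}_k^{\,2r}$ is exactly what cancels the $\phi_k^{2n}$ and $\overline{\phi}_k^{\,2n}$ terms, and you land on the compact constant $(-1)^{n-r}F_k(2r)^2$, which you then factor with \eqref{eq:sum}. Your version shows why this particular $\alpha_{k,r}$ gives an $n$-independent value: it is the same cancellation that forced $a=\frac13(\phi_k^{2r}+\phi_k^{-2r})$ earlier. The paper's version stays entirely within integer identities.

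One correction to your closing remarks: oddness of $r$ is never actually used. The identity $\phi_k^{-2r}=\overline{\phi}_k^{\,2r}$ holds for every integer $r$, since $\phi_k^{-1}=-\overline{\phi}_k$ and $2r$ is even. Likewise $(-1)^{n+r}=(-1)^{n-r}$ always holds. Finally, $3\alpha_{k,r}=(\phi_k^r+\overline{\phi}_k^{\,r})^2-2(\phi_k\overline{\phi}_k)^r=\phi_k^{2r}+\overline{\phi}_k^{\,2r}$ for all $r$, because $F_k(r+1)+F_k(r-1)=\phi_k^r+\overline{\phi}_k^{\,r}$; you do not need Lemma~\ref{lemma:anitha} for this. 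So both your argument and the paper's prove the formula for every $r\ge1$, with $\alpha_{k,r}$ rational in general. Oddness matters only for integrality of $\alpha_{k,r}$.
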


\begin{proof}
Let $r\ge 1$.  First, we recall the Catalan identity (see~\eqref{eq:Catalan}):
\begin{equation} \label{eq:Catalan2}
  F_{k}(n)^{2}-F_{k}(n-r)\,F_{k}(n+r)=(-1)^{\,n-r}\,F_{k}(r)^{2}.
\end{equation}
We add the identities \eqref{eq:sum} and \eqref{eq:DOcagne} (with $a =n$ and $b=r$ and $a=r$ and $b=n$, respectively) in order to eliminate the term $F_{k}(n+1)F_k(r)$. We get
\begin{equation}\label{eq:lineal}
  (F_{k}(r+1)+F_{k}(r-1))\,F_{k}(n)=F_{k}(n+r)+(-1)^{r}F_{k}(n-r).
\end{equation}

\noindent
Next, multiplying \eqref{eq:Catalan2} by 
\(\bigl(F_{k}(r+1)+F_{k}(r-1)\bigr)^{2}\) and substituting \(F_{k}(n)\) using~\eqref{eq:lineal}, we obtain
\begin{multline}\label{eq:Markoff_grande}
  F_{k}(n+r)^{2}+F_{k}(n-r)^{2}
  -\Bigl((F_{k}(r+1)+F_{k}(r-1))^{2}-2(-1)^{r}\Bigr)\,
   F_{k}(n+r)\,F_{k}(n-r)
  =\\(-1)^{\,n-r}\,F_{k}(r)^{2}\,\bigl(F_{k}(r+1)+F_{k}(r-1)\bigr)^{2}.
\end{multline}
Finally, we note that, by substituting \eqref{eq:deF_k(a)} into \eqref{eq:Markoff_grande}, it can be shown that the left-hand side of \eqref{eq:Markoff_grande} is $\mm(\alpha_{k,r}, F_{k}(n-r), F_{k}(n+r))-\alpha_{k,r}^2$, and hence the proposition follows.
\end{proof}

\medskip 

A priori, it is not clear whether the value of $\mm(\alpha_{k,r}, F_{k}(n-r), F_{k}(n+r))$ given in the previous proposition is positive and thus constitutes a  valid solution to the $m-$Markoff equation with $m>0$. This is the content of the following proposition.

\begin{proposition}\label{prop2.5}
Let $r$ be odd and $n$ even, with $n > r>0$. Then
\[
\mm\left(\alpha_{k,r}, F_k(n-r), F_k(n+r)\right) > 0
\quad \text{if and only if} \quad
k \geq 3,
\]
and
\[
\mm\left(\alpha_{k,r}, F_k(n-r), F_k(n+r)\right) =0 
\quad \text{if and only if} \quad
1 \le k \le 2 \text{ and } r = 1.
\]
\end{proposition}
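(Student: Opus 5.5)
The plan is to reduce the sign of $\mm$ to a comparison between $L_k(2r)$ and $3F_k(2r)$. Since $r$ is odd and $n$ is even, $n-r$ is odd, so $(-1)^{n-r}=-1$. Proposition~\ref{thm:familia_par_impar} then gives
\[
\mm\bigl(\alpha_{k,r},F_k(n-r),F_k(n+r)\bigr)=\alpha_{k,r}^2-F_k(r)^2\bigl(F_k(r+1)+F_k(r-1)\bigr)^2 .
\]
This value does not depend on $n$.

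Next I simplify the subtracted term. Taking $n=r$ in \eqref{eq:lineal}, or using \eqref{eqn:fibonacci1} with $2r$ replaced by $r$, gives $F_k(2r)=F_k(r)\bigl(F_k(r+1)+F_k(r-1)\bigr)$. So the subtracted term is exactly $F_k(2r)^2$. By Lemma~\ref{lemma:anitha}, $\alpha_{k,r}=L_k(2r)/3$, and therefore
\[
\mm=\Bigl(\tfrac{L_k(2r)}{3}-F_k(2r)\Bigr)\Bigl(\tfrac{L_k(2r)}{3}+F_k(2r)\Bigr).
\]
The second factor is positive. Hence the sign of $\mm$ is the sign of $L_k(2r)-3F_k(2r)$, and $\mm=0$ exactly when $L_k(2r)=3F_k(2r)$.

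To compare the two, I write $L_k(2r)=F_k(2r+1)+F_k(2r-1)$ (shown in the proof of Lemma~\ref{lemma:anitha}). Expanding $F_k(2r+1)=kF_k(2r)+F_k(2r-1)$ gives
\[
L_k(2r)-3F_k(2r)=2F_k(2r-1)-(3-k)F_k(2r).
\]
I then split on $k$:
\begin{enumerate}
\item If $k\ge3$, the expression is at least $2F_k(2r-1)>0$, so $\mm>0$.
\item If $k=2$, it equals $2F_2(2r-1)-F_2(2r)=-F_2(2r-2)$. This is $\le0$, with equality if and only if $2r-2=0$, i.e.\ $r=1$.
\item If $k=1$, it equals $2\bigl(F_1(2r-1)-F_1(2r)\bigr)=-2F_1(2r-2)$. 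Again this is $\le0$, with equality if and only if $r=1$.
\end{enumerate}

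Combining the cases gives both equivalences in the statement. For $k\in\{1,2\}$ one has $\mm\le0$, so $\mm>0$ forces $k\ge3$. The value $\mm=0$ occurs exactly when $k\le2$ and $r=1$.

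The only step needing care is identifying $F_k(r)(F_k(r+1)+F_k(r-1))$ with $F_k(2r)$, which turns the expression into a clean difference of squares. I expect this to follow directly from identity \eqref{eq:sum}, so the proof should be short.
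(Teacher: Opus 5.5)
Your proof is correct. For the equality case it coincides with the paper's argument: the paper also uses $F_k(2r)=F_k(r)\bigl(F_k(r+1)+F_k(r-1)\bigr)$ and reduces $\mm=0$ to $F_k(2r+1)+F_k(2r-1)=3F_k(2r)$, i.e.\ $(3-k)F_k(2r)=2F_k(2r-1)$.

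Where you differ is the positivity claim. The paper proves it separately, at level $r$. Writing $S=F_k(r+1)+F_k(r-1)$ and $\alpha_{k,r}=(S^2+2)/3$, it reduces $\mm>0$ to $S+2/S>3F_k(r)$. It proves this for $k\ge3$ from $F_k(r+1)\ge kF_k(r)$. It refutes it for $k\in\{1,2\}$, $r\ge3$ via the auxiliary bounds $4\le S\le 3F_k(r)-1$, and it leaves the case $r=1$, $k\le2$ to the equality part.

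Your difference-of-squares factorization
\[
\mm=\tfrac19\bigl(L_k(2r)-3F_k(2r)\bigr)\bigl(L_k(2r)+3F_k(2r)\bigr)
\]
makes the sign of $\mm$ equal to the sign of the single quantity $2F_k(2r-1)-(3-k)F_k(2r)$. So one computation settles both equivalences at once, with no auxiliary inequalities and no separate treatment of $r=1$. It also yields the exact values $-F_2(2r-2)$ and $-2F_1(2r-2)$ of that quantity when $k\le2$.

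The paper's route stays closer to the defining formula for $\alpha_{k,r}$, but pays for it with more case analysis.
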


\begin{proof}
By Proposition~\ref{thm:familia_par_impar},
\[
\mm\left(\alpha_{k,r},F_k(n-r),F_k(n+r)\right)
=\alpha_{k,r}^{2}-F_k(r)^{2}\bigl(F_k(r+1)+F_k(r-1)\bigr)^{2}.
\]
Since $\alpha_{k,r}>0$ (Lemma~\ref{lem:a_entero}), the inequality
$\mm(\alpha_{k,r},F_k(n-r),F_k(n+r))>0$ is equivalent to
\[
\alpha_{k,r}>F_k(r)\bigl(F_k(r+1)+F_k(r-1)\bigr).
\]
As $r$ is odd, by Lemma \ref{lem:a_entero}, equation \eqref{eq:deF_k(a)} gives
\[
\alpha_{k,r}=\frac{\bigl(F_k(r+1)+F_k(r-1)\bigr)^2+2}{3},
\]
so the previous inequality is equivalent to
\[
F_k(r+1)+F_k(r-1)+\frac{2}{F_k(r+1)+F_k(r-1)}>3F_k(r).
\]
This holds for $k\ge3$ because $F_k(r+1)\ge kF_k(r)\ge 3F_k(r)$ by
\eqref{eq:basic_bound}, and the extra fraction is positive.
For $k\in\{1,2\}$ and $r\ge3$, one has
$F_k(r+1)+F_k(r-1)\le 3F_k(r)-1$ and $F_k(r+1)+F_k(r-1)\ge 4$, hence
\[
F_k(r+1)+F_k(r-1)+\frac{2}{F_k(r+1)+F_k(r-1)}
\le 3F_k(r)-1+\frac12<3F_k(r),
\]
so the inequality fails. Therefore
$\mm(\alpha_{k,r},F_k(n-r),F_k(n+r))>0$ holds exactly when $k\ge3$.

Finally, by Proposition~\ref{thm:familia_par_impar} again,
\[
\mm\left(\alpha_{k,r},F_k(n-r),F_k(n+r)\right)=0
\]
is equivalent to
\[
\alpha_{k,r}=F_k(r)\bigl(F_k(r+1)+F_k(r-1)\bigr).
\]
By Lemma~\ref{lemma:anitha} we also have $\alpha_{k,r}=F_k(4r)/(3F_k(2r))$, hence
the previous equality is equivalent to
\[
F_k(4r)=3F_k(2r)\,F_k(r)\bigl(F_k(r+1)+F_k(r-1)\bigr).
\]
Using \eqref{eq:sum} with $(a,b)=(2r,2r)$ gives
$F_k(4r)=F_k(2r)\bigl(F_k(2r+1)+F_k(2r-1)\bigr)$, and using \eqref{eq:sum} with
$(a,b)=(r,r)$ and $(a,b)=(r-1,r)$ gives
$F_k(2r)=F_k(r)\bigl(F_k(r+1)+F_k(r-1)\bigr)$. Cancelling the common factor
$F_k(2r)>0$, we get the equivalent condition
\[
F_k(2r+1)+F_k(2r-1)=3F_k(2r).
\]
With the recurrence $F_k(2r+1)=kF_k(2r)+F_k(2r-1)$ this is equivalent to
\[
(3-k)F_k(2r)=2F_k(2r-1),
\]
which forces $k\le2$, and for $k\in\{1,2\}$ it forces $r=1$ because
$F_k(2r)=kF_k(2r-1)+F_k(2r-2)>2F_k(2r-1)$ for $r\ge2$.
Conversely, if $r=1$ and $k\in\{1,2\}$, then $(3-k)F_k(2)=2F_k(1)$ holds, so
$\mm(\alpha_{k,1},F_k(n-1),F_k(n+1))=0$.
\end{proof}

\subsection{\texorpdfstring{Classification of non-minimal triples with two $k$-Fibonacci components}{Non-minimal triples with two k-Fibonacci components}}

In this subsection, we prove Theorem \ref{the:nonminimal_general}, showing that every non-minimal Markoff \( m \)-triple with two $k$-Fibonacci components of the form \( (a, F_k(b), F_k(c)) \) corresponds to parameters \( a = \alpha_{k,r} \) as defined in~\eqref{eq:deF_k(a)}, \( b = n - r \), and \( c = n + r \), for some odd integer \( r \).
\medskip 

\begin{lemma}[Auxiliary bound]
\label{lem:n>=l}
Let $(a,F_k(n),F_k(n+\ell))$ be an ordered, non-minimal Markoff $m$-triple with $m>0$
. Then $n \ge \ell$.
\end{lemma}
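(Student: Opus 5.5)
The plan is to combine the non-minimality condition with the ordering to bound $F_k(n+\ell)$ by a multiple of $F_k(n)^2$, and then read off an index inequality from Lemma~\ref{lemma:F(c)<=3F(n)F(m)}. Write $b=F_k(n)$ and $c=F_k(n+\ell)$. Since the triple is ordered, $a\le b$. Since it is non-minimal, $c<3ab$. Together these give
\[
F_k(n+\ell)<3a\,F_k(n)\le 3\,F_k(n)F_k(n).
\]
If $\ell=0$ there is nothing to prove, so assume $\ell\ge1$.

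Next I apply Lemma~\ref{lemma:F(c)<=3F(n)F(m)} with the two smaller indices both equal to $n$ and the largest index equal to $n+\ell$. These indices are non-decreasing, as the lemma requires, because $\ell\ge 0$.
\begin{itemize}
\item For $k\ge3$, part (iii) turns the strict inequality $F_k(n+\ell)<3F_k(n)^2$ into $n+\ell<2n$, i.e.\ $\ell<n$.
\item For $k=2$, part (ii) gives $n+\ell\le 2n$, i.e.\ $\ell\le n$.
\item For $k=1$ and $n\ge2$, part (i)(a) gives $n+\ell\le 2n$, so again $n\ge\ell$.
\end{itemize}
I use only the ``if and only if'' direction of the lemma, so its equality cases play no role.

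The one case not covered by the lemma is $k=1$, $n=1$, where part (i)(b) would only give $1+\ell\le 4$. I expect this small boundary case to be the main (though minor) obstacle, and I plan to treat it directly. Here $b=F_1(1)=1$, so $a\le b$ forces $a=1$, and non-minimality forces $c<3$, so $c\in\{1,2\}$. The two resulting triples $(1,1,1)$ and $(1,1,2)$ both have $\mm=0$. This contradicts $m>0$, so this case cannot occur.

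Combining the cases gives $n\ge\ell$ in all situations. The hypothesis $m>0$ is used only to exclude this degenerate case.
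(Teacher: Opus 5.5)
Your proof is correct and follows the same route as the paper: use $a\le F_k(n)$ together with non-minimality to get $F_k(n+\ell)<3F_k(n)^2$, then read off the index inequality from Lemma~\ref{lemma:F(c)<=3F(n)F(m)}. You also treat the case $k=1$, $n=1$ separately, which the paper's one-line appeal to that lemma skips; there part (i)(b) only gives $\ell\le 2$, and the ordered non-minimal triple $(1,F_1(1),F_1(3))=(1,1,2)$ must be excluded by $m>0$, as you do.
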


\begin{proof}
Since the triple is ordered and non-minimal, Remark \ref{rmk:usefulnotes} gives that $a_q < a \le F_k(n)$, where
\[
a_q = \frac{F_k(n+\ell)}{3F_k(n)}.
\]
In particular, $a_q < F_k(n)$, so that $F_k(n+\ell) < 3F_k(n)^2$. By Lemma~\ref{lemma:F(c)<=3F(n)F(m)}, we know that
$
F_k(n+\ell) < 3F_k(n)^2$ if and only if $n + n \ge n+\ell,$
which simplifies to $n \ge \ell$.
\end{proof}
\medskip 
\begin{lemma}
\label{lemma:nonminimalFib}
   Suppose that $(a, F_k(n), F_k(n+\ell))$ is an ordered, non-minimal Markoff triple with two $k$-Fibonacci components and assume that $3 \nmid k$. Then, either $k^\ell < 4$ or 
    $\ell \equiv 2 \pmod{4}$ and $a = L_{k}(\ell)/3$.
\end{lemma}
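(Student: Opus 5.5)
The plan is to pin down $3a$ to within an interval of length less than $1$ around $F_k(n+\ell)/F_k(n)$, and then to show that $F_k(n+\ell)/F_k(n)$ lies just below the integer $L_k(\ell)$. Write $b=F_k(n)$ and $c=F_k(n+\ell)$. The triple is ordered and non-minimal, so Remark~\ref{rmk:usefulnotes} and Corollary~\ref{propr:unicity} give $a_q<a<a_p$, where $a_q=c/(3b)$. Proposition~\ref{prop:smallerinterval} gives $a_p-a_q<b/c$. Multiplying by $3$,
\[
\frac{c}{b}<3a<\frac{c}{b}+\frac{3b}{c}.
\]
By \eqref{eq:basic_bound}, applied $\ell$ times, $b/c\le k^{-\ell}$. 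Assume from now on that $k^\ell\ge 4$, since otherwise we are done. Then $3b/c\le 3/4<1$, so the integer $3a$ lies in an open interval of length less than $1$ whose left endpoint is $c/b$.

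Next I express $c/b$ through $L_k(\ell)$. By Lemma~\ref{lem:n>=l} we have $n\ge\ell$, so Lemma~\ref{lem:lucas} with $(a,b)=(n,\ell)$ applies:
\[
\frac{c}{b}=L_k(\ell)-(-1)^\ell\,\frac{F_k(n-\ell)}{F_k(n)}.
\]
Set $\delta=F_k(n-\ell)/F_k(n)$. Again by \eqref{eq:basic_bound}, $0\le\delta\le k^{-\ell}\le 1/4$. I now split on the parity of $\ell$.

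\emph{Case $\ell$ odd.} Here $c/b=L_k(\ell)+\delta$, so $3a$ lies in $(L_k(\ell)+\delta,\;L_k(\ell)+\delta+3b/c)$. The right endpoint is at most $L_k(\ell)+1/4+3/4=L_k(\ell)+1$, so the interval lies inside $(L_k(\ell),L_k(\ell)+1]$. Since the interval is open and $\delta+3b/c<1$ strictly (shown below), it contains no integer. This contradicts the integrality of $3a$.

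\emph{Case $\ell$ even.} Here $c/b=L_k(\ell)-\delta$, so $3a$ lies in $(L_k(\ell)-\delta,\;L_k(\ell)-\delta+3b/c)$. The left endpoint exceeds $L_k(\ell)-1$, so the only integer this interval can contain is $L_k(\ell)$. Hence $3a=L_k(\ell)$. The sub-case $n=\ell$ is excluded automatically: then $\delta=0$, and the interval $(L_k(\ell),L_k(\ell)+3b/c)$ contains no integer. Finally, $3\mid L_k(\ell)$, and Lemma~\ref{Lucascongmod4} (valid since $3\nmid k$) says this happens iff $\ell\equiv 2\pmod 4$. This gives $a=L_k(\ell)/3$, as required.

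The main obstacle is the boundary bookkeeping. The crude bounds $\delta\le 1/4$ and $3b/c\le 3/4$ sum to exactly $1$, so in the odd case the interval could a priori touch $L_k(\ell)+1$. To rule this out I would use that equality in \eqref{eq:basic_bound} holds only when its argument is $1$, which gives $b/c<k^{-\ell}$ strictly once $n\ge 2$. The degenerate small cases, namely $n$ or $\ell$ equal to $1$ or $n=\ell$, I would check directly. If this turns out to be tight, a sharper estimate is available: since $\mm(a_p,b,c)=0$, we have $3a_p=(b^2+c^2+a_p^2)/(bc)$, which bounds the upper endpoint by $c/b+b/c$ plus a tiny correction and leaves ample room.
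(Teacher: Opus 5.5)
Your argument is correct and follows the paper's proof essentially step for step: the window $(a_q,a_p)$ of length $<b/c\le k^{-\ell}$, the Lucas identity of Lemma~\ref{lem:lucas} for $a_q$, the parity split on $\ell$, and Lemma~\ref{Lucascongmod4}; the only cosmetic difference is that you rescale by $3$ and place the integer $3a$ relative to $L_k(\ell)$, rather than placing $a$ within $1/3$ of $L_k(\ell)/3$. The ``boundary obstacle'' you flag is not real, because $3a<c/b+3b/c$ is strict (from the strict inequality in Proposition~\ref{prop:smallerinterval}), so $\delta+3b/c\le 1$ already gives $L_k(\ell)<3a<L_k(\ell)+1$ in the odd case, and no equality-case analysis or small-case checks are needed.
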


\begin{proof}
Recall that by applying the definition in Section \ref{sec:nonminimals} to the tuple $(a,F_k(n),F_k(n+\ell))$, we have that 
\[a_q = \frac{F_k(n+\ell)}{3F_k(n)}. \]
We shall make use of the identity
\[
F_k(i+j) = F_k(i) L_k(j) - (-1)^j F_k(i-j),
\]
where $L_k(j)$ denotes the associated $k$-Lucas sequence (see Lemma~\ref{lem:lucas}). In particular, taking $i = n$ and $j = \ell$, we obtain
    \begin{equation}
        \label{eqn:lucas}
    a_q = \frac{L_{k}(\ell)}{3} - (-1)^\ell\frac{F_{k}(n-\ell)}{3F_k(n)}.
    \end{equation}
    Note, that by induction on \eqref{eq:basic_bound}, it is elementary to show that 
    \begin{equation}
        \label{eqn:elementarybound}
    \frac{F_{k}(n-\ell)}{F_k(n)} \le \frac{1}{k^\ell} \quad \text{and} \quad \frac{b}{c} = \frac{F_k(n)}{F_k(n+\ell)} \le \frac{1}{k^\ell}.
    \end{equation}
    We distinguish two cases. Firstly, if $\ell$ is even, then $a_q <\frac{L_{k}(\ell)}{3}$. If $\frac{L_{k}(\ell)}{3} < a_p$, Proposition \ref{prop:smallerinterval} implies that 
    \begin{equation}
        \label{eqn:inequalitieskl}
    a_p - a_q < \frac{1}{k^\ell} \quad \text{ and so } \quad \left|a_p-\frac{L_{k}(\ell)}{3}\right| < \frac{1}{k^\ell}\quad \text{and} \quad \left|\frac{L_{k}(\ell)}{3}-a_q\right| < \frac{1}{k^\ell}.
    \end{equation}
    Suppose now that $a_q < a_p \le \frac{L_{k}(\ell)}{3}$. Then, we see that 
    \[\frac{L_{k}(\ell)}{3}-a_p\le \frac{L_{k}(\ell)}{3}-a_q < \frac{1}{k^\ell}\]
    by \eqref{eqn:elementarybound} and so \eqref{eqn:inequalitieskl} also holds.

    Finally, if $\ell$ is odd, we have that $\frac{L_{k}(\ell)}{3} < a_q < a_p$ and Proposition \ref{prop:smallerinterval} together with \eqref{eqn:lucas} and \eqref{eqn:elementarybound}, it follows that 
    \[a_p - \frac{L_{k}(\ell)}{3} < \frac{4}{3k^\ell}.\]
    If we suppose that $k^\ell \ge 4$, it readily follows that, in either of the aforementioned cases, we have that 
    \begin{equation}
        \label{eqn:mindistance}
    \max\left\{\left|a_p-\frac{L_{k}(\ell)}{3}\right|, \left|\frac{L_{k}(\ell)}{3}-a_q\right|\right\} < \frac{1}{3}.
    \end{equation}
    Now, we note that, since $3 \nmid k$, $L_{k}(\ell)$ is only divisible by $3$ if $\ell \equiv 2\pmod{4}$ (see Lemma \ref{Lucascongmod4}). Suppose first that $\ell \not \equiv 2 \pmod{4}$. Then, the distance from $L_{k}(\ell)$ to its closest integer is precisely $1/3$. From \eqref{eqn:mindistance}, we deduce that the interval $[a_q,a_p]$ contains no integer.

    Consequently, it follows that $\ell \equiv 2 \pmod{4}$, so that $\ell$ is even. In this case, $\frac{L_{k}(\ell)}{3}$ is the unique integer in the interval $(a_q, a_p)$ and then $a = \frac{L_{k}(\ell)}{3}$ by Corollary \ref{propr:unicity}.

\end{proof}

\begin{lemma}\label{lemma_4.2}
    If $k^\ell \ge 4$ and $3 \mid k$, there are no non-minimal Markoff $m$-triples of the form $(a, F_k(n), F_{k}(n+\ell) )$
\end{lemma}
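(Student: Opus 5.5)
We mirror the proof of Lemma~\ref{lemma:nonminimalFib}; the only change is which $k$-Lucas numbers are divisible by $3$. Suppose, for contradiction, that $(a,F_k(n),F_k(n+\ell))$ is a non-minimal Markoff $m$-triple with $k^\ell\ge 4$ and $3\mid k$; we may take it ordered, so $n\ge 1$. The thresholds are $a_q=F_k(n+\ell)/(3F_k(n))$ and $a_p$, the smaller root of $\mm(x,F_k(n),F_k(n+\ell))=0$. Corollary~\ref{propr:unicity} says $a$ must be an integer in $(a_q,a_p)$, so it suffices to show that this interval contains no integer.

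The first step is to reuse the estimates from the proof of Lemma~\ref{lemma:nonminimalFib}, none of which used $3\nmid k$. By Lemma~\ref{lem:lucas},
\[
a_q=\frac{L_k(\ell)}{3}-(-1)^\ell\frac{F_k(n-\ell)}{3F_k(n)}.
\]
This requires $n\ge\ell$, which holds by Lemma~\ref{lem:n>=l}. Combining this with the bounds \eqref{eqn:elementarybound} and Proposition~\ref{prop:smallerinterval}, we get
\[
\left|a_q-\tfrac{L_k(\ell)}{3}\right|<\tfrac{1}{k^\ell}
\qquad\text{and}\qquad
\left|a_p-\tfrac{L_k(\ell)}{3}\right|<\tfrac{4}{3k^\ell}.
\]
With $k^\ell\ge4$, inequality \eqref{eqn:mindistance} follows: both endpoints lie within distance $1/3$ of $L_k(\ell)/3$. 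In fact, since $3\mid k$ forces $k\ge3$, the condition $k^\ell\ge 4$ holds for every $\ell\ge1$, so this applies throughout.

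The second step uses the divisibility pattern for $3\mid k$. By Lemma~\ref{Lucascongmod4}, $3\mid L_k(\ell)$ exactly when $\ell$ is odd. We split on the parity of $\ell$.

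\emph{Case $\ell$ even.} Then $3\nmid L_k(\ell)$, so $L_k(\ell)/3$ is at distance exactly $1/3$ from the nearest integer. Since $[a_q,a_p]$ lies strictly within distance $1/3$ of $L_k(\ell)/3$, it contains no integer, and we have a contradiction.

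\emph{Case $\ell$ odd.} This is the main obstacle, because now $L_k(\ell)/3$ is itself an integer and the distance argument alone does not exclude it. Here the sign in the formula for $a_q$ helps. For $\ell$ odd, $a_q=L_k(\ell)/3+F_k(n-\ell)/(3F_k(n))\ge L_k(\ell)/3$, so the integer $L_k(\ell)/3$ lies at or below $a_q$ and not in the open interval $(a_q,a_p)$. The next integer up is $L_k(\ell)/3+1$, but $a_p<L_k(\ell)/3+\tfrac{4}{3k^\ell}\le L_k(\ell)/3+\tfrac13$, so that integer is excluded too. Hence $(a_q,a_p)$ again contains no integer, which is a contradiction.

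The edge case $n=\ell$ needs a check: there $F_k(0)=0$, so $a_q=L_k(\ell)/3$ exactly, and the open interval still excludes it. Both cases therefore contradict the existence of $a$, which proves the lemma.
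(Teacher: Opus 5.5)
Your proof is correct and is essentially the paper's argument: reuse \eqref{eqn:mindistance}, exclude even $\ell$ because $L_k(\ell)/3$ then sits at distance exactly $\tfrac13$ from every integer, and for odd $\ell$ use $L_k(\ell)/3\le a_q<a_p<L_k(\ell)/3+\tfrac13$. One harmless slip: your aside that $3\mid k$ makes $k^\ell\ge4$ automatic for every $\ell\ge1$ is false ($k=3$, $\ell=1$ gives $k^\ell=3$ and $\tfrac{4}{3k^\ell}=\tfrac49>\tfrac13$, a case the paper treats separately in Lemma~\ref{lemma:klsmall}), but since $k^\ell\ge4$ is a hypothesis of the lemma, your proof is unaffected.
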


\begin{proof}
    By an identical argument to the previous lemma, we show that \eqref{eqn:mindistance} still holds if $3 \mid k$. However, now $L_{k}(\ell)$ is divisible by $3$ if and only if $\ell$ is odd and so (by mimicking the case $l \not \equiv 2 \pmod{4}$ in the proof of Lemma \ref{lemma:nonminimalFib}) it follows that the only possible ordered non-minimal solutions are of the form $(a, F_k(n), F_{k}(n+\ell))$ with $\ell$ odd.
    
    However, then \eqref{eqn:lucas} implies that $L_{k}(\ell)/3 < a_q < a_p$ and, since $L_{k}(\ell)/3$ is an integer and $a_p-L_{k}(\ell)/3 < 1/3$, it follows that the interval $(a_q, a_p)$ contains no integer. Consequently, there is no possible value of $a$ and thus there is no non-minimal Markoff $m$-triple.
\end{proof}
\medskip 

We now address the first case in Lemma \ref{lemma:nonminimalFib}, namely when $k^l < 4$. The next lemma characterises all non-minimal Markoff triples in this range.

\begin{lemma}
\label{lemma:klsmall}
    Let $2 \le k \le 3$. Then, the only possible ordered non-minimal Markoff triples $(a, F_k(n), F_{k}(n+1))$ with $\mm(a, F_k(n), F_{k}(n+1))\ge0$ correspond to $m=0$, $k=2$ and the triples $(1,F_{k}(1),F_{k}(1)), (1,F_{k}(1),F_{k}(2))$ and $(1,F_{k}(2),F_{k}(3))$.
\end{lemma}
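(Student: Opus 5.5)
We handle $k=3$ and $k=2$ by trapping $a$ in a short interval, then finish $k=2$ with an explicit formula for $\mm$.

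\textbf{Localising $a$.} Write $b=F_k(n)$ and $c=F_k(n+1)$. Since $x\mapsto\mm(x,b,c)$ is strictly decreasing for $x\le c$ (see the proof of Lemma~\ref{lemma:mlessthanp}), the condition $\mm(a,b,c)\ge 0$ with $a\le b\le c$ is equivalent to $a\le a_p$. Non-minimality gives $a>a_q=c/(3b)$. So $a$ is an integer in the half-open interval $(a_q,a_p]$.

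We bound this interval using the ratio $c/b=F_k(n+1)/F_k(n)$. By the recurrence and \eqref{eq:basic_bound}, this ratio satisfies $k\le c/b\le k+\tfrac1k$, with $c/b=k$ exactly when $n=1$. Proposition~\ref{prop:smallerinterval} only uses non-minimality and the ordering $b\le c$, so it applies here and gives $a_p<a_q+\tfrac{b}{c}\le a_q+\tfrac1k$.

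\textbf{Case $k=3$.} Here $a_q\in[1,\tfrac{10}{9}]$, with $a_q=1$ only when $n=1$. Hence $a_p<\tfrac{10}{9}+\tfrac13<2$. In every case the interval $(a_q,a_p]$ lies strictly between $1$ and $2$, so it contains no integer. Thus there are no such triples for $k=3$.

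\textbf{Case $k=2$, reduction to $a=1$.} Here $a_q\in[\tfrac23,\tfrac56]$, so $a_p<\tfrac56+\tfrac12<2$. The only possible value is therefore $a=1$, and it remains to decide when $\mm(1,F_2(n),F_2(n+1))\ge 0$.

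\textbf{Case $k=2$, computing $m$.} The Pell-type identity
\[
F_2(n+1)^2-2F_2(n+1)F_2(n)-F_2(n)^2=(-1)^n
\]
follows from Simson \eqref{eq:Simson} together with $F_2(n+1)=2F_2(n)+F_2(n-1)$. Using it and the recurrence once more, we get
\[
\mm(1,F_2(n),F_2(n+1))=1+(-1)^n-F_2(n)F_2(n-1).
\]
For $n\ge 3$ we have $F_2(n)F_2(n-1)\ge 10$, so $m<0$. For $n=1$ and $n=2$ the formula gives $m=0$. These are the triples $(1,1,2)=(1,F_2(1),F_2(2))$ and $(1,2,5)=(1,F_2(2),F_2(3))$. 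Both are non-minimal, since $2<3$ and $5<6$.

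\textbf{The triple $(1,F_2(1),F_2(1))$.} The statement also lists $(1,1,1)$, which corresponds to the degenerate case where the last two entries coincide. We verify it directly: $\mm(1,1,1)=0$, and it is non-minimal because $1<3$.

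\textbf{Main obstacle.} The step needing the most care is the closed form for $m$ when $a=1$. The sign $(-1)^n$ has to be tracked correctly, because the two small cases $n=1,2$ give $m=0$ exactly, and an error there would change which triples survive. The interval estimates for $k=3$ and for reducing to $a=1$ are routine once the bounds on $F_k(n+1)/F_k(n)$ are in place.
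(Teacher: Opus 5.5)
Your proof is correct, but it takes a different route from the paper's.

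\textbf{The paper's route.} The paper rules out $a\ge 2$ with an ad hoc chain of inequalities. It uses the bound \eqref{eq:bound_product2}, the identity $F_k(n)^2+F_k(n+1)^2=F_k(2n+1)$ and $F_k(2n+1)\le 4F_k(2n)$ to reach $F_k(2n)\le a+3$, which gives a contradiction. It then settles $a=1$ by citation. For $m>0$ it invokes \cite[Theorem 1.1]{ACMRS25b}, which says triples of three $k$-Fibonacci numbers are minimal. For $m=0$ it notes that non-minimality forces $k<3$ and appeals to the Pell--Markoff classification in \cite{KST}.

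\textbf{Your route.} You reuse the interval machinery of Section~\ref{sec:nonminimals}. Combining Proposition~\ref{prop:smallerinterval} with $k\le F_k(n+1)/F_k(n)\le k+\frac1k$ traps $a$ in $(a_q,a_p]\subset(a_q,a_q+\frac1k)$. This interval contains no integer for $k=3$ and only the integer $1$ for $k=2$. The closed form $\mm(1,F_2(n),F_2(n+1))=1+(-1)^n-F_2(n)F_2(n-1)$, obtained from Simson, then replaces both external citations.

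\textbf{Comparison.} Your version is self-contained. It treats the case $k^\ell<4$ in the same spirit as Lemma~\ref{lemma:nonminimalFib}, and it gives the exact value of $m$ along the family. The paper's argument for $a\ge 2$ does not need Section~\ref{sec:nonminimals}, but for $a=1$ it depends on outside classification theorems.

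\textbf{Two small remarks.} First, Proposition~\ref{prop:smallerinterval} also uses $a\le b$ (to get $a^2+\frac53-2b^2<0$), not only $b\le c$. This is harmless, since your triple is assumed ordered. Second, $(1,1,1)$ is not actually of the form $(a,F_2(n),F_2(n+1))$; it appears in the statement only because \cite{KST} lists it. Since the lemma only lists the triples that are \emph{possible}, your direct check of $(1,1,1)$ is unnecessary, though not wrong.
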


\begin{proof}

Let $(a, F_k(n), F_{k}(n+1))$ be an ordered non-minimal Markoff triple with $a \neq 1$. Since $\mm(a, F_k(n), F_{k}(n+1))\ge0$, it follows that 
\begin{equation}
\label{eqn:auxineq}
    a^2 + F_k(n)^2 + F_{k}(n+1)^2 \ge 3aF_k(n)F_{k}(n+1) \ge \frac{3aF_{k}(2n)}{1+\frac{1}{k^2}},
\end{equation}
where the last inequality follows by \eqref{eq:bound_product2}. Since $2 \le k \le 3$, it follows that $1+1/k^2 \le 5/4$ and so \eqref{eqn:auxineq} implies that 
\begin{equation}
    \label{eqn:newuglybound}
a^2+F_k(n)^2+F_{k}(n+1)^2 \ge \frac{12}{5}aF_{k}(2n).
\end{equation}

We note that, by applying \eqref{eq:Catalan} with $r = n+1$, we get that
\[F_k(n)^2 + F_{k}(n+1)^2 = F_{k}(2n+1)F_{k}(-1) = F_{k}(2n+1).\]
Combining this with \eqref{eqn:newuglybound} and the fact that $F_{k}(2n+1) \le (k+1) F_{k}(2n) \le 4F_{k}(2n)$, we see that
\[{a^2+4F_{k}(2n) \ge}a^2 + F_{k}(2n+1) \ge \frac{12}{5}aF_{k}(2n),\]
or equivalently,
\[F_{k}(2n)\left(\frac{12}{5}a-4\right) \le a^2.\]
Since $a >1$ by assumption, this inequality can be written as
\begin{equation}
    \label{eqn:ainequality}
F_{k}(2n) \le \frac{a^2}{\frac{12}{5}a-4} \le a+3.
\end{equation}
If $a \ge 2$, the triple $(a, F_k(n), F_{k}(n+1))$ can only be ordered if $n \ge 2$. In this case, we have $F_{k}(2n) = kF_{k}(2n-1)+F_{k}(2n-2) > kF_{k}(2n-1) \ge 2 F_{k}(2n-1)$. Combining this inequality with \eqref{eqn:ainequality}, we get that 
\begin{equation}
    \label{eqn:contradictioninequality}
2F_{k}(2n-1)\le a+3 \le F_k(n)+3,
\end{equation}
where the last inequality follows from the fact that the triple $(a, F_k(n), F_{k}(n+1))$ is ordered by assumption. However, \eqref{eqn:contradictioninequality} is never satisfied, which proves the lemma for $a > 1$.

Finally, let us suppose that $a = 1$. Since $F_{k}(1) = 1$, the triple $(1, F_k(n), F_{k}(n+1))$ consists of three $k$-Fibonacci numbers. If $m>0$, \cite[Theorem 1.1]{ACMRS25b} implies that this triple is minimal, contradicting our original assumption.

    The only remaining case to consider is when $a=1$ and $m=0$. Since the triple $(1, F_k(n), F_k(n+1))$ is not minimal, then 
    $F_k(n+1)<3 F_k(n)\,.$
    By the recurrence relation for the $k$-Fibonacci sequence $kF_k(n)\leq F_k(n+1)$, then $k\,F_k(n)\leq 3F_k(n)$ $k$ which forces $k < 3$. 
Since by hypothesis $2 \le k \le 3$, it follows that $k = 2$. Therefore, the triple $(1, F_2(n), F_2(n+1))$ consists of three consecutive Pell numbers (recall that $P_1 = 1$). 
According to \cite[Theorem~1.1]{KST}, the only solutions to the corresponding Pell--Markoff equation are
\[
(1, 1, 1), \quad (1, 1, 2), \quad \text{and} \quad (1, 2, 5).
\]
All of these are ordered and non-minimal, and they coincide precisely with the triples listed in the statement of the lemma.
\end{proof}

\subsubsection{The Fibonacci case}
After Lemma \ref{lemma:klsmall}, in order to fully characterise triples\\ $(a, F_k(n), F_{k}(n+\ell))$ with $k^\ell < 4$, it only remains to consider the case $k=1$, which corresponds to classical Fibonacci numbers.

\begin{lemma}
\label{lem:k=1_l_odd}
Assume that $(a,F_1(n),F_1(n+\ell))$ is an ordered, non-minimal Markoff $m$-triple with $m>0$. If $\ell\ge 3$ is odd, then no such triple exists.
\end{lemma}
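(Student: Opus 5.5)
The plan is to run the argument of Lemma~\ref{lemma:nonminimalFib} with $k=1$, where the crude bound $k^\ell\ge4$ is unavailable, and to replace it by sharper ratio estimates that exploit the parity of $\ell$. By Lemma~\ref{lem:n>=l} we may assume $n\ge \ell\ge 3$. Lemma~\ref{lem:lucas} with $(a,b)=(n,\ell)$ and $\ell$ odd gives
\[
a_q=\frac{F_1(n+\ell)}{3F_1(n)}=\frac{L_1(\ell)}{3}+\frac{F_1(n-\ell)}{3F_1(n)},
\]
so $L_1(\ell)/3<a_q<a_p$. By Corollary~\ref{propr:unicity}, $a$ is the unique integer in $(a_q,a_p)$. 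Since $\ell$ is odd, $\ell\not\equiv 2\pmod 4$, so Lemma~\ref{Lucascongmod4} gives $3\nmid L_1(\ell)$. Hence the only candidate is $a^*=\lceil L_1(\ell)/3\rceil$, and $a^*-L_1(\ell)/3\in\{1/3,2/3\}$. It therefore suffices to show
\[
a_p<\frac{L_1(\ell)}{3}+\delta,\qquad \delta:=a^*-\frac{L_1(\ell)}{3}.
\]
By Proposition~\ref{prop:smallerinterval}, $a_p<a_q+b/c$, so it is enough to prove
\begin{equation*}
\frac{F_1(n-\ell)}{3F_1(n)}+\frac{F_1(n)}{F_1(n+\ell)}\le \delta .
\end{equation*}

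First I would establish elementary Fibonacci ratio bounds valid for $n\ge \ell$. For $\ell\ge3$, the identity $F_1(n)=3F_1(n-3)+2F_1(n-4)$ gives $F_1(n-\ell)/F_1(n)\le 1/3$. Similarly $F_1(n+3)=3F_1(n)+2F_1(n-1)\ge 4F_1(n)$ for $n\ge2$, so $F_1(n)/F_1(n+\ell)\le 1/4$. For $\ell\ge5$ the same device, iterating \eqref{eq:sum}, gives $F_1(n-\ell)/F_1(n)\le 1/8$ and $F_1(n)/F_1(n+\ell)\le 1/8$ (because $F_1(n+5)=5F_1(n+1)+3F_1(n)\ge 8F_1(n)$). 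The boundary cases with small $n$ (such as $n=\ell$, where $F_1(0)=0$) only help and would be checked directly.

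Then I would split according to $L_1(\ell)\bmod 3$.

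\emph{Case $L_1(\ell)\equiv1\pmod 3$, so $\delta=2/3$.} This covers $\ell=3$, since $L_1(3)=4$. The crude bounds give a left-hand side of at most $1/9+1/4<2/3$, so $(a_q,a_p)$ contains no integer and no triple exists.

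\emph{Case $L_1(\ell)\equiv2\pmod 3$, so $\delta=1/3$.} Here $\ell\ge5$ necessarily, since $\ell=3$ falls in the previous case. The sharper bounds give a left-hand side of at most $1/24+1/8=1/6<1/3$, so again there is no integer in the interval.

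The main obstacle is this second case. The naive estimate $a_p<L_1(\ell)/3+4/9$, which uses only the $\ell\ge3$ bounds, does not exclude the integer $L_1(\ell)/3+1/3$. The argument therefore hinges on noticing that $\ell=3$ never lands in this residue class, so the stronger $\ell\ge5$ ratio bounds are available exactly where they are needed. I would also double-check the strictness at the endpoint: Corollary~\ref{propr:unicity} uses the open interval $(a_q,a_p)$ and Proposition~\ref{prop:smallerinterval} is strict, so a strict inequality on the left-hand side suffices.
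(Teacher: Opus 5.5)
Your proof is correct and shares the paper's skeleton: the decomposition $a_q=\frac{L_1(\ell)}{3}+\frac{F_1(n-\ell)}{3F_1(n)}$ from Lemma~\ref{lem:lucas}, the bound $a_p-a_q<F_1(n)/F_1(n+\ell)$ from Proposition~\ref{prop:smallerinterval}, and $3\nmid L_1(\ell)$. The difference is in how you close the final inequality.

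The paper aims for the uniform target $\frac13$, the distance from $L_1(\ell)/3$ to the nearest integer, using ratio bounds of $\frac14$ quoted from \cite[Table~2]{ACMRS25a}. That target is genuinely out of reach in the smallest case: for $(n,\ell)=(4,3)$ the left-hand side is $\frac19+\frac{3}{13}>\frac13$. So the paper splits on whether $n-\ell\le 2$ and checks the triples $(a,2,8)$ and $(a,3,13)$ by hand.

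You instead measure the gap $\delta$ to the next integer above $L_1(\ell)/3$. You note that $L_1(3)=4$ gives $\delta=\frac23$ for $\ell=3$, and you use your self-proved bound $\frac18$ for $\ell\ge 5$. This removes both the exceptional cases and the external citation. In effect your split is just $\ell=3$ versus $\ell\ge5$, since $\frac1{24}+\frac18<\frac13\le\delta$ for every $\ell\ge5$ whatever the residue.

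Your residue bookkeeping is also more careful than the paper's text. The paper asserts $L_1(\ell)\equiv 1\pmod{3}$ for every odd $\ell$, which is false for $\ell=5,7$, where $L_1(\ell)=11,29$. The paper's conclusion still holds only because it needs just distance $\frac13$ to the nearest integer.

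Finally, your caveat about boundary cases is unnecessary. The bounds $F_1(n)\ge 3F_1(n-3)$ for $n\ge3$ and $F_1(n)\ge 8F_1(n-5)$ for $n\ge5$ hold outright, including at $n=\ell$.
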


\begin{proof}
By Lemma~\ref{lem:n>=l}, we have $n\ge \ell$. Since $\ell$ is odd, identity~\eqref{eqn:lucas} gives
\[
a_q=\frac{L_1(\ell)}{3}+\frac{F_1(n-\ell)}{3F_1(n)}.
\]
Moreover, Proposition~\ref{prop:smallerinterval} yields
\[
a_p-a_q<\frac{F_1(n)}{F_1(n+\ell)}.
\]
Since the triple is non-minimal, we have $a_q<a<a_p$, and therefore
\[
0<a-\frac{L_1(\ell)}{3}
<
\frac{F_1(n-\ell)}{3F_1(n)}+\frac{F_1(n)}{F_1(n+\ell)}.
\]
We now show that the right-hand side is always strictly smaller than $1/3$.

Since $\ell\ge 3$ is odd and $n\ge \ell$, we have $n\ge 3$. Also, by \cite[Table~2]{ACMRS25a},
\[
\frac{F_1(n)}{F_1(n+\ell)}\le \frac14.
\]

First suppose that $n-\ell\le 2$. If $(n,\ell)\neq (3,3),(4,3)$, then necessarily $n\ge 5$, and hence
\[
\frac{F_1(n-\ell)}{3F_1(n)}\le \frac{1}{3F_1(n)}\le \frac1{15}.
\]
Therefore,
\[
\frac{F_1(n-\ell)}{3F_1(n)}+\frac{F_1(n)}{F_1(n+\ell)}
\le \frac1{15}+\frac14<\frac13.
\]

Now suppose that $n-\ell>2$. Then again by \cite[Table~2]{ACMRS25a},
\[
\frac{F_1(n-\ell)}{F_1(n)}\le \frac14.
\]
Hence
\[
\frac{F_1(n-\ell)}{3F_1(n)}\le \frac1{12}.
\]
Since $\ell\ge 3$, we also have
\[
\frac{F_1(n)}{F_1(n+\ell)}<\frac14,
\]
and thus
\[
\frac{F_1(n-\ell)}{3F_1(n)}+\frac{F_1(n)}{F_1(n+\ell)}
<
\frac1{12}+\frac14=\frac13.
\]

We conclude that
\[
0<a-\frac{L_1(\ell)}{3}<\frac13.
\]
On the other hand, when $\ell$ is odd, Lemma~\ref{Lucascongmod4} implies that
\[
L_1(\ell)\equiv 1 \pmod 3,
\]
so $L_1(\ell)/3$ is at distance exactly $1/3$ from the nearest integer. Since $a$ is an integer, this is impossible.

It remains to check the exceptional cases $(n,\ell)=(3,3)$ and $(4,3)$. These correspond to the triples
\[
(a,F_1(3),F_1(6))=(a,2,8)
\qquad\text{and}\qquad
(a,F_1(4),F_1(7))=(a,3,13).
\]
A direct computation shows that neither of them yields a non-minimal Markoff $m$-triple with $m>0$. This proves the lemma.
\end{proof}
\medskip

\begin{lemma}\label{lemma:k=l=1}
There is no ordered non-minimal Markoff $m$-triple ($m>0$) of the form
$$
\bigl(a,\,F_{1}(n),\,F_{1}(n+1)\bigr).
$$ 
\end{lemma}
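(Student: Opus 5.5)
The plan is to reduce to $a=1$ with the interval machinery of Section~\ref{sec:nonminimals}, and then compute $\mm(1,F_1(n),F_1(n+1))$ directly.

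\textbf{Step 1: the only candidate is $a=1$.} Let $(a,F_1(n),F_1(n+1))$ be ordered and non-minimal, and write $b=F_1(n)$, $c=F_1(n+1)$.

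By Remark~\ref{rmk:usefulnotes}, $a_q=\frac{F_1(n+1)}{3F_1(n)}<a$. Since $F_1(n+1)\le 2F_1(n)$ for all $n\ge1$, we get $a_q\le \frac23$.

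By Proposition~\ref{prop:smallerinterval} and Remark~\ref{rmk:usefulnotes}, $a<a_p<a_q+\frac{b}{c}\le a_q+1\le \frac53$.

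Hence the integer $a$ lies in the interval $(a_q,\frac53)$ with $a_q\le\frac23$. This forces $a=1$.

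\textbf{Step 2: $m\le 0$ when $a=1$.} Put $x=F_1(n)$ and $y=F_1(n+1)$. From the Simson identity~\eqref{eq:Simson} and the recurrence, one gets the Cassini form
\[
y^2-xy-x^2=(-1)^n.
\]
The sign should be checked on $n=1,2$. Then
\[
\mm(1,x,y)=1+x^2+y^2-3xy=1+(y^2-xy-x^2)+2x^2-2xy,
\]
so
\[
\mm(1,x,y)=1+(-1)^n-2F_1(n)\bigl(F_1(n+1)-F_1(n)\bigr)=1+(-1)^n-2F_1(n)F_1(n-1).
\]
Split by $n$:
\begin{itemize}
\item For $n\ge2$ we have $F_1(n)F_1(n-1)\ge1$, so $\mm\le 2-2=0$.
\item For $n=1$ the triple is $(1,1,1)$, and $\mm(1,1,1)=0$.
\end{itemize}
In every case $m\le 0$, which contradicts $m>0$. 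So no such triple exists.

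As an alternative to Step~2, one could invoke the result of \cite{ACMRS25a}: triples of three Fibonacci numbers with $m>0$ are minimal, and this would give the contradiction in the same way Lemma~\ref{lemma:klsmall} used \cite{ACMRS25b}. The direct computation is self-contained, though.

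\textbf{Main obstacle.} The only delicate point is getting the sign and indexing in the Cassini identity right. A quick check on small $n$ handles it:
\begin{itemize}
\item $n=2$: the triple $(1,1,2)$ has $\mm=0$.
\item $n=3$: the triple $(1,2,3)$ has $\mm=1+4+9-18=-4$.
\end{itemize}
Both agree with the formula $1+(-1)^n-2F_1(n)F_1(n-1)$.
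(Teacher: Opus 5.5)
Your proof is correct. It rests on the same core fact as the paper, namely $\mm(1,F_1(n),F_1(n+1))\le 0$, but reaches it by a different route.

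The paper never pins down $a$. It writes $a$ as the smaller root of the quadratic in $a$ and notes that increasing $m$ from $0$ decreases that root. Hence $a<A(x,y)$, where $A(x,y)$ is the $m=0$ root (your $a_p$). It then shows $A(x,y)\le 1$, i.e.\ $x^2+y^2-3xy+1\le 0$, via the estimate $(x-y)^2-xy+1\le x^2-xy+1\le 0$, which uses only $x\le y\le 2x$. You instead use Proposition~\ref{prop:smallerinterval} and $F_1(n+1)\le 2F_1(n)$ to force $a=1$. You then evaluate $\mm(1,x,y)=1+(-1)^n-2F_1(n)F_1(n-1)$ exactly via Cassini.

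What each buys:
\begin{itemize}
\item The paper's estimate is more general: it rules out every pair $1\le x<y\le 2x$, not just consecutive Fibonacci numbers.
\item However, its final inequality $x^2-xy+1\le 0$ is false at $n=1$ ($x=y=1$). There one has to observe separately that $x^2+y^2-3xy+1=0$, so still $A(1,1)=1$.
\item Your exact formula handles $n=1$ with no special case.
\end{itemize}

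Two small remarks on your write-up:
\begin{itemize}
\item The inequality $a<a_p$ comes from the monotonicity argument in the proof of Lemma~\ref{lemma:mlessthanp}, not from Remark~\ref{rmk:usefulnotes} itself.
\item Your Step~1 is dispensable. The vertex of $\mm(\cdot,x,y)$ lies at $3xy/2>x$, so $\mm(\cdot,x,y)$ is decreasing on $[1,x]$. Hence Step~2 alone gives $m=\mm(a,x,y)\le\mm(1,x,y)\le 0$ for every ordered choice of $a$.
\end{itemize}
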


\begin{proof}
Assume that such a triple exists and write $x:=F_{1}(n)$ and $y:=F_{1}(n+1)$, so $y\le 2x$ and $y\ge x\ge 1$.  
From the Markoff equation \(a^2+x^2+y^2=3axy+m\), taking the smaller root in $a$ (as the triple is ordered), we obtain
\[
a = \frac{3xy - \sqrt{9x^2y^2 - 4x^2 - 4y^2 + 4m}}{2}.
\]

Since we assume that \(m > 0\), the discriminant of the previous quadratic increases with \(m\), so the expression for \(a\) decreases. Therefore, we define
\[
A(x,y) := \frac{3xy - \sqrt{9x^2y^2 - 4x^2 - 4y^2}}{2},
\]
and conclude that
\[
a < A(x,y).
\]

We claim that \(A(x,y)\le 1\), for all consecutive Fibonacci numbers.  
Indeed, $A(x,y)\le 1$ if and only if 
$$3xy-2\le\sqrt{9x^2y^2-4x^2-4y^2}
$$
and squaring (both sides are positive since $x,y\ge 1$) and manipulating the equations gives that
\[
x^2+y^2-3xy+1\le 0.
\]
Since \(x\ge 1\) and \(x\le y\le 2x\), then
\[x^2+y^2-3xy+1 = (x-y)^2-xy+1 \le x^2-xy+1\le0.\]
The left-hand side is non-positive, thereby proving the claim. 

Thus \(a< A(x,y) \le 1\), a contradiction.

\end{proof}
\medskip 

\begin{lemma}
\label{lem:k=1_l_congruent with 0 modulus 4}
There are no ordered non-minimal Markoff $m$-triples $(a,F_1(n),F_1(n+\ell))$ with $m>0$, for any $\ell \equiv 0 \pmod{4}$.
\end{lemma}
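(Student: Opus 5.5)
The plan mirrors the proof of Lemma~\ref{lem:k=1_l_odd}, but now $\ell$ is even, so $a_q$ lies just below $L_1(\ell)/3$ rather than just above it.

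Let $\ell\equiv 0\pmod 4$, so in particular $\ell\ge 4$. Suppose $(a,F_1(n),F_1(n+\ell))$ is an ordered non-minimal Markoff $m$-triple with $m>0$.

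\textbf{Step 1: locate the interval.} Lemma~\ref{lem:n>=l} gives $n\ge \ell\ge 4$. Since $\ell$ is even, identity~\eqref{eqn:lucas} gives
\[
a_q=\frac{L_1(\ell)}{3}-\frac{F_1(n-\ell)}{3F_1(n)}.
\]
Proposition~\ref{prop:smallerinterval} gives $a_p-a_q<F_1(n)/F_1(n+\ell)$. Non-minimality means $a_q<a<a_p$, hence
\[
-\frac{F_1(n-\ell)}{3F_1(n)}<a-\frac{L_1(\ell)}{3}<\frac{F_1(n)}{F_1(n+\ell)}.
\]

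\textbf{Step 2: bound both sides by $1/3$.} Since $n\ge 4$ and $\ell\ge4$, the ratio $F_1(n)/F_1(n+\ell)$ is at most roughly $\phi_1^{-4}<1/6$. One can confirm this through \cite[Table~2]{ACMRS25a}, or directly from $F_1(n+4)=3F_1(n+1)+2F_1(n)\ge 5F_1(n)$, which gives the bound $1/5$. For the left side, $F_1(n-\ell)\le F_1(n)$ trivially, so $F_1(n-\ell)/(3F_1(n))\le 1/3$, with equality impossible because $n-\ell<n$ and $n\ge 4$. In fact $F_1(n-\ell)\le F_1(n-4)\le F_1(n)/4$, so this term is at most $1/12$. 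Therefore $|a-L_1(\ell)/3|<1/3$.

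\textbf{Step 3: reach a contradiction mod 3.} By Lemma~\ref{Lucascongmod4} with $k=1$, we have $3\mid L_1(\ell)$ only when $\ell\equiv2\pmod 4$. Hence for $\ell\equiv 0\pmod4$, the number $L_1(\ell)/3$ is at distance exactly $1/3$ from the nearest integer. This contradicts $|a-L_1(\ell)/3|<1/3$ with $a\in\mathbb Z$.

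\textbf{Expected obstacle.} The main care point is making the numerical bounds in Step~2 uniform in $n$ and $\ell$, including small boundary cases such as $n=\ell=4$. In that case $F_1(0)=0$, the left bound is trivially $0$, and the argument goes through. If a bound is ever too weak for the smallest $(n,\ell)$, I would check those finitely many triples $(a,F_1(n),F_1(n+\ell))$ by direct computation, as done for $(3,3),(4,3)$ in Lemma~\ref{lem:k=1_l_odd}.
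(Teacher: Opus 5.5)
Your proof is correct and follows essentially the same route as the paper: you confine $a$ to the window $(a_q,a_p)$, use \eqref{eqn:lucas} and Proposition~\ref{prop:smallerinterval} to show this window lies within distance less than $1/3$ of $L_1(\ell)/3$, and conclude from $3\nmid L_1(\ell)$ for $\ell\equiv 0\pmod 4$. The differences are only in the details: you use $F_1(n+4)\ge 5F_1(n)$ where the paper iterates $F_1(n)\ge 2F_1(n-2)$, and you explicitly bound the term $F_1(n)/F_1(n+\ell)$ coming from Proposition~\ref{prop:smallerinterval}, which the paper's proof leaves implicit.
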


\begin{proof}
Assume that such a  triple exists. For $n>2$ we have 
$$
F_1(n) \;=\; F_1(n-1) + F_1(n-2) \;\ge\; 2F_1(n-2).
$$
Iterating this inequality yields
$$
F_1(n) \;\ge\; 2^{\ell/2}\,F_1(n-\ell), \qquad \text{for } n>\ell.
$$
Consequently,
$$
\frac{F_1(n-\ell)}{F_1(n)} \;\le\; \frac{1}{2^{\ell/2}}.
$$

This implies, applying the argument of the proof of Lemma \ref{lemma:nonminimalFib} and taking into account that $\ell$ is an even number, $\ell \geq 4$, that 
\begin{equation}
        \label{eqn:mindistance adapted}
    \max\left\{\left|a_p-\frac{L_{1}(\ell)}{3}\right|, \left|\frac{L_{1}(\ell)}{3}-a_q\right|\right\} \leq \frac{1}{2^{\frac{\ell}{2}}}< \frac{1}{3}.
    \end{equation}

    According to the reasoning of the proof of Lemma \ref{lemma:nonminimalFib}, this yields $\ell\equiv 2 \pmod{4}$, a contradiction with the assumption.    
\end{proof}

\subsubsection{The general case} With the preparatory results established, we are now in a position to state the main theorem of this section, which provides a complete characterisation of ordered non-minimal triples whose last two components are $k$-Fibonacci numbers.

\begin{lemma}\label{lem:ordered_nonminimal_boundary}
Let $r$ be odd and $k \ge 1$. Consider the triple
\[
\left(\frac{F_{k}(4r)}{3F_{k}(2r)},\,F_{k}(N-r),\,F_{k}(N+r)\right).
\]
Then the following hold:
\begin{enumerate}
    \item The triple is ordered if and only if
    \[
    \begin{cases}
    N>3r, & \text{if } k>2,\\[3pt]
    N\ge 3r, & \text{if } k\in\{1,2\}.
    \end{cases}
    \]
    \item If the triple is ordered, then it is minimal if and only if $N=3r$, and non-minimal if and only if $N>3r$.
\end{enumerate}
\end{lemma}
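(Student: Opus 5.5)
Write $\alpha=\alpha_{k,r}=F_k(4r)/(3F_k(2r))$. By Lemma~\ref{lemma:anitha} and \eqref{eqn:fibonacci1}, $\alpha=(F_k(2r+1)+F_k(2r-1))/3=L_k(2r)/3$. The two parts reduce to two comparisons: $\alpha$ against $F_k(N-r)$ for ordering, and $F_k(N+r)$ against $3\alpha F_k(N-r)$ for minimality. Since $F_k(N-r)\le F_k(N+r)$ always holds, the triple is ordered exactly when $\alpha\le F_k(N-r)$.

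For part (1), I first place $\alpha$ relative to $F_k(2r)$, which corresponds to $N=3r$. I will show that $3F_k(2r)$ and $F_k(2r+1)+F_k(2r-1)=kF_k(2r)+2F_k(2r-1)$ compare as follows:
\begin{itemize}
\item if $k\ge3$, then $\alpha>F_k(2r)$, since $kF_k(2r)\ge 3F_k(2r)$ and $2F_k(2r-1)>0$;
\item if $k\in\{1,2\}$, then $\alpha\le F_k(2r)$, since this is equivalent to $2F_k(2r-1)\le(3-k)F_k(2r)$, which follows from $F_k(2r)\ge kF_k(2r-1)$, with equality exactly when $r=1$, as in the proof of Proposition~\ref{prop2.5}.
\end{itemize}
Next I compare $\alpha$ with $F_k(2r+1)$. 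Here $3F_k(2r+1)=3kF_k(2r)+3F_k(2r-1)>kF_k(2r)+2F_k(2r-1)$, so $\alpha<F_k(2r+1)$ for every $k$. Because $F_k$ is strictly increasing on indices $\ge1$ (for $k=1$ apart from $F_1(1)=F_1(2)$, which only matters for tiny $r$), this yields the following:
\begin{itemize}
\item for $k>2$, ordering holds iff $N-r\ge 2r+1$, i.e. $N>3r$;
\item for $k\le2$, ordering holds iff $N-r\ge 2r$, i.e. $N\ge 3r$.
\end{itemize}
The case $k=1$, $r=1$ needs a quick direct check: $\alpha=1$, and $F_1(N-1)\ge 1$ holds for all $N\ge 2$. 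Since the statement only claims the threshold $N\ge3$, I would confirm that $N=2$, giving the triple $(1,1,2)$, is outside the range being characterised. I expect this to be the main nuisance: the small boundary cases where $F_1(1)=F_1(2)$. I would handle them by hand rather than through the general inequality.

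For part (2), minimality means $F_k(N+r)\ge 3\alpha F_k(N-r)=L_k(2r)F_k(N-r)$. Lemma~\ref{lem:lucas} with $a=N-r$ and $b=2r$ (valid when $N-r\ge 2r$, which holds whenever the triple is ordered) gives
\[
F_k(N+r)=L_k(2r)F_k(N-r)-F_k(N-3r),
\]
since $2r$ is even. Hence
\[
F_k(N+r)-3\alpha F_k(N-r)=-F_k(N-3r).
\]
This is $0$ when $N=3r$, so the triple is minimal. It is negative when $N>3r$, so the triple is non-minimal. This proves (2) cleanly, and it is also consistent with the ordering thresholds found in (1).
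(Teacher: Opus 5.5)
Your proposal is correct except at one boundary case: for $(k,r)=(1,1)$ and $N=2$ the statement itself fails, and your argument does not prove the ``only if'' direction of (1) for $k\in\{1,2\}$.

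\textbf{Comparison with the paper.} Your two reductions are the paper's: ordering is equivalent to $L_k(2r)\le 3F_k(N-r)$, and minimality to $F_k(N+r)\ge L_k(2r)F_k(N-r)$. Your verifications are better. The formula $3\alpha=kF_k(2r)+2F_k(2r-1)$ proves the comparisons the paper only asserts (``preserved by the recurrence'', and ``$L_k(2r)\le 3F_k(2r)$ for all $r$''). Lemma~\ref{lem:lucas} gives the exact value $F_k(N+r)-3\alpha F_k(N-r)=-F_k(N-3r)$ for $N\ge 3r$. By \eqref{eq:DOcagne} this is the same quantity the paper controls through its monotone-ratio argument with $1/\phi_k$, so your route is cleaner. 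Part (2) is fine once (1) is settled.

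\textbf{The gap for $k\in\{1,2\}$.} Knowing $\alpha\le F_k(2r)$ only shows that $N\ge 3r$ is sufficient for ordering. To show it is necessary you need $\alpha>F_k(2r-1)$. Your comparison with $F_k(2r+1)$ does not help here; it only serves the case $k>2$. From $3\alpha=kF_k(2r)+2F_k(2r-1)$, the needed inequality is equivalent to $kF_k(2r)>F_k(2r-1)$. This holds for $k=2$, and for $k=1$ with $r\ge 3$.

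\textbf{The counterexample.} For $(k,r)=(1,1)$ that inequality is false: $\alpha=1=F_1(1)$. Taking $N=2$ gives $(1,F_1(1),F_1(3))=(1,1,2)$. This triple is ordered even though $N<3r$, and it is non-minimal since $2<3\cdot 1\cdot 1$. So both (1) and (2) fail there.

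Nothing in the statement excludes $N=2$, so you cannot ``confirm it is outside the range being characterised.'' You should state the exception explicitly and prove the lemma for $(k,r)\ne(1,1)$, or under a hypothesis such as $N\ge 3r$. Your part (2) inherits the same issue through ``$N-r\ge 2r$ holds whenever the triple is ordered.''

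The paper's proof has the same blind spot: for $k\le 2$ it checks only $N=3r$, never $N=3r-1$. The exception is harmless for the paper's applications. Those only use the lemma with $N\ge 3r$, either via Lemma~\ref{lem:n>=l} or through the hypotheses of Lemma~\ref{lemma:enganche}. Also, $(1,1,2)$ has $m=0$ anyway.
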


\begin{proof} We first determine when the triple is ordered. By monotonicity of the $k$-Fibonacci sequence, 
\[ F_k(N-r)\le F_k(N+r), \qquad\text{for all } N>r. \] 
Thus, it suffices to compare the first two entries. Since 
$\frac{F_k(4r)}{3F_k(2r)}=\frac{L_k(2r)}{3},$ orderedness is equivalent to 
\[ L_k(2r)\le 3F_k(N-r). \] 
If $k>2$, this inequality holds for all $N>3r$, while it fails at $N=3r$, since then it becomes \[ L_k(2r)\le 3F_k(2r), \] which is false for all $r\ge 1$. Indeed, for $r=1$,
\[ L_k(2)=k^2+2>3k=3F_k(2), \qquad (k>2), \] 
and the inequality $L_k(2r)>3F_k(2r)$
is preserved by the recurrence relations \eqref{df:k_fibonacci} and \eqref{df:k_lucas}. If $k\in\{1,2\}$, the inequality already holds at $N=3r$, since 
\[ L_k(2r)\le 3F_k(2r) \qquad\text{for all } r\ge 1. \] 
Therefore, the triple is ordered if and only if 
\[ \begin{cases} N>3r, & \text{if } k>2,\\[3pt] N\ge 3r, & \text{if } k\in\{1,2\}. \end{cases} \] 
Assume now that 
\[ (x,y,z)=\left(\frac{F_k(4r)}{3F_k(2r)},\,F_k(N-r),\,F_k(N+r)\right) \] 
is ordered. Since minimality is equivalent to \(z\ge 3xy\), and 
$3xy=\frac{F_k(4r)}{F_k(2r)}\,F_k(N-r),$ the triple is minimal if and only if 
\[ F_k(N+r)\ge \frac{F_k(4r)}{F_k(2r)}\,F_k(N-r). \] 
Using Lemma~\ref{lem:lucas} with \(a=b=2r\), namely
\[ F_k(4r)=F_k(2r)L_k(2r), \]
this is equivalent to 
\[ F_k(N+r)\ge L_k(2r)\,F_k(N-r). \] 
Using $L_k(2r)=F_k(2r+1)+F_k(2r-1)$ and the addition formula \eqref{eq:sum}, we get 
\begin{align*} L_k(2r)F_k(N-r)-F_k(N+r) &=F_k(2r-1)F_k(N-r)-F_k(2r)F_k(N-r-1). \end{align*}
Therefore, the sign is determined by \[ \frac{F_k(2r-1)}{F_k(2r)}-\frac{F_k(N-r-1)}{F_k(N-r)}. \] 

Since the sequence \(\frac{F_k(2n-1)}{F_k(2n)}\) is strictly decreasing in \(n\) and converges to \(\frac{1}{\phi_k}\), whereas the sequence \(\frac{F_k(2n)}{F_k(2n+1)}\) is strictly increasing in \(n\) and also converges to \(\frac{1}{\phi_k}\), it follows that, if \(N\) is odd and \(N-r>2r\), then
\[
\frac{F_k(N-r-1)}{F_k(N-r)}<\frac{F_k(2r-1)}{F_k(2r)},
\]
so the difference is positive. If \(N\) is even, then
\[
\frac{F_k(2r-1)}{F_k(2r)}-\frac{F_k(N-r-1)}{F_k(N-r)}
>
\frac{1}{\phi_k}-\frac{1}{\phi_k}=0.
\]
Therefore,
\[
\frac{F_k(2r-1)}{F_k(2r)}-\frac{F_k(N-r-1)}{F_k(N-r)}
\begin{cases}
=0, & \text{if } N-r=2r,\\[3pt]
>0, & \text{if } N-r>2r.
\end{cases}
\]
Hence,
\[
L_k(2r)F_k(N-r)-F_k(N+r)
\begin{cases}
=0, & \text{if } N=3r,\\[3pt]
>0, & \text{if } N>3r.
\end{cases}
\]
Equivalently,
\[
F_k(N+r)
\begin{cases}
=3xy, & \text{if } N=3r,\\[3pt]
<3xy, & \text{if } N>3r.
\end{cases}
\]
Hence, the ordered triple is minimal if and only if \(N=3r\), and non-minimal if and only if \(N>3r\).
\end{proof}

\begin{theorem}[\textbf{Theorem \ref{the:nonminimal_general} of the Introduction}]
Let $(a,b,c)$ be a non-minimal Markoff $m$-triple with $m>0$. Then \(b\) and \(c\) are $k$-Fibonacci numbers if and only if
\[
(a,b,c)=\left(\alpha_{k,r},\,F_k(N-r),\,F_k(N+r)\right),
\]
where $\alpha_{k,r}=\frac{F_k(4r)}{3F_k(2r)},$ \(r\) is an odd integer, \(3\nmid k\), and \(N>3r\); moreover, if \(k\in\{1,2\}\), then \(N\) must be odd.
\end{theorem}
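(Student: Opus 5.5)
We prove the two directions separately. Throughout, a non-minimal triple is taken to be ordered, $a\le b\le c$, as in Section~\ref{sec:nonminimals}.

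\emph{Necessity.} Write $b=F_k(n)$ and $c=F_k(n+\ell)$ with $\ell\ge 0$.
\begin{itemize}
\item If $3\mid k$ then $k\ge 3$, so $k^\ell\ge 4$ whenever $\ell\ge 2$. For $\ell\ge 2$, Lemma~\ref{lemma_4.2} excludes the triple; $\ell=1$ is excluded by Lemma~\ref{lemma:klsmall} when $k=3$, and by Lemma~\ref{lemma_4.2} when $k\ge 6$. Hence $3\nmid k$.
\item With $3\nmid k$, suppose first $k^\ell\ge 4$. Lemma~\ref{lemma:nonminimalFib} gives $\ell\equiv 2\pmod 4$ and $a=L_k(\ell)/3$.
\item Suppose instead $k^\ell<4$. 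Then $\ell=0$, or $k\in\{2,3\}$ with $\ell=1$, or $k=1$ with $\ell\le 3$.
  \begin{itemize}
  \item $\ell=0$ means $b=c$. Non-minimality then forces $a_q=1/3<a<a_p$, and a direct check shows $a_p\le 1$, so no integer $a$ fits.
  \item $k\in\{2,3\}$, $\ell=1$ is killed by Lemma~\ref{lemma:klsmall}, since those triples have $m=0$.
  \item $k=1$, $\ell=1$ is Lemma~\ref{lemma:k=l=1}; $\ell=3$ is Lemma~\ref{lem:k=1_l_odd}.
  \end{itemize}
  For $k=1$, any larger $\ell$ with $\ell\not\equiv 2\pmod 4$ is excluded by Lemmas~\ref{lem:k=1_l_odd} and \ref{lem:k=1_l_congruent with 0 modulus 4}. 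When $\ell\equiv 2\pmod 4$, the argument of Lemma~\ref{lemma:nonminimalFib} still gives $a=L_1(\ell)/3$, using the bound $F_1(n-\ell)/F_1(n)\le 2^{-\ell/2}$ as in Lemma~\ref{lem:k=1_l_congruent with 0 modulus 4}. The case $\ell=2$ needs separate care, because there the bound only equals $1/2$. I expect to handle it by direct computation, getting $a=L_1(2)/3=1$, which is $\alpha_{1,1}$.
\end{itemize}
In every surviving case set $\ell=2r$ with $r$ odd and $N=n+r$. Lemma~\ref{lemma:anitha} then gives $a=L_k(2r)/3=\alpha_{k,r}$.

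\emph{Conditions on $N$.} Lemma~\ref{lem:ordered_nonminimal_boundary} shows that ordered and non-minimal forces $N>3r$. For $k\in\{1,2\}$ and $N$ even, $n=N-r$ is odd, so $(-1)^{N-r}=-1$ in Proposition~\ref{thm:familia_par_impar}. Proposition~\ref{prop2.5} then gives $m\le 0$, contradicting $m>0$. Hence $N$ must be odd.

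\emph{Sufficiency.} Take $r$ odd, $3\nmid k$, $N>3r$, and $N$ odd if $k\le 2$.
\begin{itemize}
\item Lemma~\ref{lem:a_entero} shows $\alpha_{k,r}$ is a positive integer.
\item Lemma~\ref{lem:ordered_nonminimal_boundary} shows the triple is ordered and non-minimal.
\item By Proposition~\ref{thm:familia_par_impar}, $m=\alpha_{k,r}^2\pm F_k(r)^2(F_k(r+1)+F_k(r-1))^2$, with sign $(-1)^{N-r}$. If $N$ is odd, $N-r$ is even, the sign is $+$, and $m>0$ trivially. If $N$ is even, which is allowed only for $k\ge 4$, Proposition~\ref{prop2.5} gives $m>0$.
\end{itemize}

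The main obstacle is the bookkeeping of the small cases in the necessity direction: $\ell=0$, $\ell=1$ with $k\in\{2,3\}$, and $k=1$ with $\ell=2$. There the uniform $1/3$-distance argument of Lemma~\ref{lemma:nonminimalFib} is not directly available, so each must be closed by hand.
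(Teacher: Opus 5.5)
Your proposal is correct and follows the paper's proof: it uses the same chain of results, namely Lemma~\ref{lemma_4.2} (with Lemma~\ref{lemma:klsmall}) to exclude $3\mid k$, then Lemma~\ref{lemma:nonminimalFib} together with the small-case and $k=1$ lemmas to force $\ell=2r$ with $r$ odd and $a=L_k(2r)/3$, and finally Lemma~\ref{lem:ordered_nonminimal_boundary} and Propositions~\ref{thm:familia_par_impar} and~\ref{prop2.5} for the conditions on $N$ and for the converse. You are more careful than the written proof, which never treats $\ell=0$ and applies Lemma~\ref{lemma:nonminimalFib} to $k=1$ even though $k^\ell<4$ there (and your list ``$k=1$ with $\ell\le 3$'' should read ``$k=1$ with any $\ell$'', which your later case analysis does cover); your deferred case $k=1$, $\ell=2$ is immediate, since $a_q=1-F_1(n-2)/(3F_1(n))\ge 5/6$ and $a_p<a_q+F_1(n)/F_1(n+2)\le 3/2$, which forces $a=1$.
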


\begin{proof}
Assume first that we are given an ordered, non-minimal Markoff $m$-triple with $m>0$, whose last two entries are $k$-Fibonacci numbers, that is, $
(a, F_k(n), F_k(n+\ell)).$
When $k=1$, Lemmas~\ref{lem:k=1_l_odd}, \ref{lemma:k=l=1}, and \ref{lem:k=1_l_congruent with 0 modulus 4} together imply that $\ell \equiv 2 \pmod{4}$.  
If instead $k\ge 2$ but $k^\ell<4$, Lemma~\ref{lemma:klsmall} again excludes the existence of non-minimal triples with $m>0$.  
Consequently, any such triple must satisfy $k^\ell \ge 4$ and $3\nmid k$, by Lemma~\ref{lemma_4.2}.

Lemma~\ref{lemma:nonminimalFib} then yields $\ell\equiv 2\pmod{4}$ and $a=L_{k}(\ell)/3$. Writing $\ell=2r$, Lemma~\ref{Lucascongmod4} implies $r$ is odd (so $L_k(2r)/3\in\mathbb{Z}$). Lemma~\ref{lem:n>=l} ensures $n\ge \ell$, so set $N:=n+r$, giving $(F_k(n),F_k(n+\ell))=(F_{k}(N-r),F_{k}(N+r))$. Finally, $L_k(2r)=F_{k}(4r)/F_{k}(2r)$ (Lemma~\ref{lem:lucas}), hence $a=F_{k}(4r)/(3F_{k}(2r))$. Now, Lemma~\ref{lem:ordered_nonminimal_boundary} provides the conditions on $N$ and $r$, and the assumption $m>0$ implies, by Proposition~\ref{prop2.5}, that $N$ must be odd whenever $k<3$.

Assume now that we are given a triple
$\left(\frac{F_{k}(4r)}{3F_{k}(2r)},\,F_{k}(N-r),\,F_{k}(N+r)\right),$
satisfying the conditions of the statement. Since $r$ is odd and $k$ is not
divisible by $3$, the triple has integer entries. The conditions on $N$ and
$r$ imply that the triple is ordered and non-minimal by
Lemma~\ref{lem:ordered_nonminimal_boundary}. Finally, the positivity
condition $m>0$ follows from Proposition~\ref{prop2.5}: it holds for all
$k>2$, and for $k\in\{1,2\}$ precisely when $N$ is odd. This completes the
classification.

\end{proof}

\section{Distribution of the principal $(2,k)$-Fibonacci branches in trees}
\label{section:5}

In this section, we study how the principal $(2,k)$-Fibonacci branches are distributed among Markoff trees. As a first step, we show that consecutive triples in these branches are related by the Vieta transformations defined in \eqref{def:vieta}.

\begin{lemma}
\label{lemma:enganche}
Let \(r\ge1\) be odd and assume \(3\nmid k\).
\begin{enumerate}[label=\textup{(\alph*)}]
  \item If \(k>2\) and \(\ell>2r\), or if \(k\in\{1,2\}\), \(\ell> 2r\) and \(\ell\) is even,
  then the triple
  \[
  \bigl(\alpha_{k,r},F_k(\ell),F_k(\ell+2r)\bigr)
  \]
  is a Markoff \(m\)-triple with \(m>0\). Moreover,
  \[
    \nu_2\!\bigl(\alpha_{k,r},F_k(\ell),F_k(\ell+2r)\bigr)
    =\bigl(\alpha_{k,r},F_k(\ell+2r),F_k(\ell+4r)\bigr).
  \]

  \item If \(k\in\{1,2\}\) and \(\ell<2r\), or if \(k>2\) and \(\ell\le 2r\), then
  \[
    \nu_1\!\bigl(F_k(\ell),\alpha_{k,r},F_k(\ell+2r)\bigr)
    =\bigl(\alpha_{k,r},F_k(\ell+2r),F_k(\ell+4r)\bigr).
  \]
\end{enumerate}
\end{lemma}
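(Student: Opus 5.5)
The plan is to reduce both parts to two facts already established: the closed formula for $\mm$ along the family (Proposition~\ref{thm:familia_par_impar}) and the Lucas-type addition identity (Lemma~\ref{lem:lucas}). Throughout, $r$ odd and $3\nmid k$ guarantee, by Lemmas~\ref{lem:a_entero} and \ref{lemma:anitha}, that $\alpha_{k,r}=L_k(2r)/3$ is a positive integer. Hence all triples involved have positive integer entries, and $3\alpha_{k,r}=L_k(2r)$.

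\emph{Key identity.} Both Vieta statements reduce to the single identity
\[
3\alpha_{k,r}F_k(\ell+2r)-F_k(\ell)=F_k(\ell+4r).
\]
I will get it by applying Lemma~\ref{lem:lucas} with $a=\ell+2r$ and $b=2r$; this is admissible since $a\ge b$ for every $\ell\ge 1$. Because $2r$ is even, the lemma gives $F_k(\ell+4r)=F_k(\ell+2r)L_k(2r)-F_k(\ell)$, and substituting $L_k(2r)=3\alpha_{k,r}$ yields the identity.

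\emph{Part (b).} By \eqref{def:vieta}, $\nu_1(F_k(\ell),\alpha_{k,r},F_k(\ell+2r))=(\alpha_{k,r},F_k(\ell+2r),3\alpha_{k,r}F_k(\ell+2r)-F_k(\ell))$, so (b) follows immediately from the key identity. The hypotheses on $\ell$ in (b) are not needed for the identity itself. They only record the range in which $F_k(\ell)\le\alpha_{k,r}$, i.e.\ the range complementary to the orderedness condition of Lemma~\ref{lem:ordered_nonminimal_boundary} with $N=\ell+r$. I will mention this but not use it.

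\emph{Part (a), Vieta step.} By definition, $\nu_2(\alpha_{k,r},F_k(\ell),F_k(\ell+2r))=(\alpha_{k,r},F_k(\ell+2r),3\alpha_{k,r}F_k(\ell+2r)-F_k(\ell))$, which is again the key identity.

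\emph{Part (a), positivity of $m$.} This is the only step with genuine content. I apply Proposition~\ref{thm:familia_par_impar} with $n=\ell+r>r$, which gives
\[
m=\alpha_{k,r}^2+(-1)^{\ell}F_k(r)^2\bigl(F_k(r+1)+F_k(r-1)\bigr)^2 .
\]
I then split on the parity of $\ell$:
\begin{itemize}
\item If $\ell$ is even, both terms are positive, so $m>0$ for every $k$. This covers the case $k\in\{1,2\}$, where $\ell$ is even by hypothesis, and the even-$\ell$ case for $k>2$.
\item If $\ell$ is odd, then necessarily $k>2$, and $n=\ell+r$ is even because $r$ is odd. Proposition~\ref{prop2.5} then applies with $n$ even and $n>r>0$ and gives $m>0$ exactly because $k\ge3$.
\end{itemize}

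I expect the main obstacle to be this parity bookkeeping: matching the sign $(-1)^{n-r}=(-1)^{\ell}$ with the hypotheses of Proposition~\ref{prop2.5}, and noting that the restriction to even $\ell$ for $k\in\{1,2\}$ is exactly what avoids $m\le 0$ there. The condition $\ell>2r$ is not needed for $m>0$. It only ensures the triple is ordered (Lemma~\ref{lem:ordered_nonminimal_boundary}), consistent with its role in Theorem~\ref{prop:branches}.
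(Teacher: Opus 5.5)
Your proof is correct and follows essentially the paper's route. The paper obtains the same key identity $3\alpha_{k,r}F_k(\ell+2r)-F_k(\ell)=F_k(\ell+4r)$ from Vajda's identity (Lemma~\ref{lemma:Vajda} with $n=a=2r$, $b=\ell$) divided by $F_k(2r)$, whereas you read it off directly from Lemma~\ref{lem:lucas}; both then apply it verbatim to $\nu_2$ and $\nu_1$. For $m>0$ the paper simply cites Proposition~\ref{prop2.5}, which only covers $n=\ell+r$ even, so your explicit parity split via Proposition~\ref{thm:familia_par_impar} (even $\ell$ gives $m\ge\alpha_{k,r}^2>0$ trivially) is the more complete version of the same argument.
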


\begin{proof}
Set \(N=\ell+r\). Then
$
(\alpha_{k,r},F_k(\ell),F_k(\ell+2r))
=
(\alpha_{k,r},F_k(N-r),F_k(N+r)).
$ Assume first that \(k>2\) and \(\ell>2r\), or that \(k\in\{1,2\}\), \(\ell\ge 2r\) and \(\ell\) is even.
Since \(r\) is odd, the condition \(\ell>2r\) is equivalent to \(N>3r\), and
\(\ell\ge 2r\) is equivalent to \(N\ge 3r\). Hence, by
Lemma~\ref{lem:ordered_nonminimal_boundary}, the triple
$(\alpha_{k,r},F_k(\ell),F_k(\ell+2r))$ is ordered and non-minimal. Moreover, by Proposition~\ref{prop2.5}, its
Markoff parameter satisfies \(m>0\), when
\(k\in\{1,2\}\), and for all \(\ell>2r\) when \(k>2\).

We now prove the Vieta identities. Using the generalisation of Vajda’s identity
(Lemma~\ref{lemma:Vajda}) with \(n=2r\), \(a=2r\) and \(b=\ell\), we get
\[
F_k(4r)F_k(\ell+2r)-F_k(2r)F_k(\ell+4r)
=
(-1)^{2r}F_k(2r)F_k(\ell)
=
F_k(2r)F_k(\ell).
\]
Dividing by \(F_k(2r)\), and using Lemma~\ref{lemma:anitha},
\[
\frac{F_k(4r)}{F_k(2r)}F_k(\ell+2r)-F_k(\ell+4r)=F_k(\ell),
\]
that is,
\[
3\alpha_{k,r}F_k(\ell+2r)-F_k(\ell)=F_k(\ell+4r).
\]
Therefore,
\[
\nu_2(\alpha_{k,r},F_k(\ell),F_k(\ell+2r))
=
(\alpha_{k,r},F_k(\ell+2r),3\alpha_{k,r}F_k(\ell+2r)-F_k(\ell))
=
(\alpha_{k,r},F_k(\ell+2r),F_k(\ell+4r)),
\]
which proves part \textup{(a)}.

For part \textup{(b)}, we apply the same identity:
\[
\nu_1(F_k(\ell),\alpha_{k,r},F_k(\ell+2r))
=
(\alpha_{k,r},F_k(\ell+2r),3\alpha_{k,r}F_k(\ell+2r)-F_k(\ell))
=
(\alpha_{k,r},F_k(\ell+2r),F_k(\ell+4r)).
\]
This proves \textup{(b)}.
\end{proof}

\noindent We recall that given a Markoff $m$-triple $(a_0,b_0,c_0)$, we write
\[
\BB(a_0,b_0,c_0):=\{\nu_2^{\,n}(a_0,b_0,c_0)\mid n\in\ZZ_{\ge0}\}
\]
for the \emph{branch} rooted at $(a_0,b_0,c_0)$.
We also recall that a  Markoff $m$-triple $(a,b,c)$ is minimal if
$$\Phi(a,b,c):=c-3ab\geq0.$$

\begin{theorem}[\textbf{Theorem \ref{prop:branches} of the Introduction}]
Fix an odd integer \(r\ge1\) and assume \(3\nmid k\).
Then the family of triples
\[
\bigl(\alpha_{k,r},\,F_k(\ell),\,F_k(\ell+2r)\bigr), \qquad \ell\in\NN,
\]
with
\[
\begin{cases}
\ell>2r, & \text{if } k>2,\\[3pt]
\ell \geq 2r \text{ and }\ell\ \text{even}, & \text{if } k\in\{1,2\},
\end{cases}
\]
is distributed among exactly \(2r\) distinct branches of \(m\)-trees, described as follows:
\begin{enumerate}
\item For each even \(\ell_0\in\{2,4,\dots,2r\}\), the triple
\((F_k(\ell_0),\alpha_{k,r},F_k(\ell_0+2r))\) is minimal. The corresponding branch is
\[
\BB\big(\nu_1(F_k(\ell_0),\alpha_{k,r},F_k(\ell_0+2r))\big)
=
\BB\big(\alpha_{k,r},F_k(\ell_0+2r),F_k(\ell_0+4r)\big).
\]

\item For each odd \(\ell_0\in\{1,3,\dots,2r-1\}\) and \(k\ge4\), the triple
\(\nu_3(F_k(\ell_0),\alpha_{k,r},F_k(\ell_0+2r))\) is minimal. The corresponding branch is
\[
\BB\big(\nu_1(F_k(\ell_0),\alpha_{k,r},F_k(\ell_0+2r))\big)
=
\BB\big(\alpha_{k,r},F_k(\ell_0+2r),F_k(\ell_0+4r)\big).
\]
\end{enumerate}
\end{theorem}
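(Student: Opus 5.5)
The plan is to partition the admissible indices $\ell$ by residue modulo $2r$ and reduce to a finite check at the starting index. By Lemma~\ref{lemma:enganche}(a), for every admissible $\ell$ we have
\[
\nu_2\bigl(\alpha_{k,r},F_k(\ell),F_k(\ell+2r)\bigr)=\bigl(\alpha_{k,r},F_k(\ell+2r),F_k(\ell+4r)\bigr).
\]
So the family splits into chains $\ell_0+2r,\ \ell_0+4r,\dots$, one for each class of $\ell$ modulo $2r$. Each chain is a $\nu_2$-orbit, namely $\BB(\alpha_{k,r},F_k(\ell_0+2r),F_k(\ell_0+4r))$ with $\ell_0\in\{1,\dots,2r\}$ the representative of the class. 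For $k\in\{1,2\}$ only even $\ell$ are allowed. Since $r$ is odd, $2r$ is even, so $\ell\equiv\ell_0$ preserves parity and only even $\ell_0$ survive. The case $\ell=2r$ gives the minimal triple of Lemma~\ref{lem:ordered_nonminimal_boundary} with $N=3r$. That triple is the first step of the branch rooted at $\ell_0=2r$, so it fits the same description.

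Next, by Lemma~\ref{lemma:enganche}(b), each branch is the $\nu_1$-image of $(F_k(\ell_0),\alpha_{k,r},F_k(\ell_0+2r))$, so the root of each branch is governed by this triple.

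\textbf{Even $\ell_0$.} I would show the triple is minimal, i.e. $\Phi\ge0$. Equivalently,
\[
F_k(\ell_0+2r)\ \ge\ L_k(2r)\,F_k(\ell_0),
\]
using $3\alpha_{k,r}=L_k(2r)$ from Lemma~\ref{lemma:anitha}. Lemma~\ref{lem:lucas} with $a=2r\ge b=\ell_0$ gives $F_k(2r+\ell_0)=F_k(2r)L_k(\ell_0)-F_k(2r-\ell_0)$. I would rather use the form $L_k(2r)F_k(\ell_0)=F_k(2r+\ell_0)+F_k(\ell_0-2r)$, where the negative-index term is $F_k(-(2r-\ell_0))=(-1)^{2r-\ell_0+1}F_k(2r-\ell_0)=-F_k(2r-\ell_0)$ because $\ell_0$ is even. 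This yields
\[
\Phi=F_k(2r-\ell_0)\ \ge 0.
\]
Equality holds exactly when $\ell_0=2r$, which matches the minimal triple found earlier. I must also check that the triple $(F_k(\ell_0),\alpha_{k,r},F_k(\ell_0+2r))$ is ordered, i.e. $F_k(\ell_0)\le\alpha_{k,r}$. This holds since $L_k(2r)\ge 3F_k(2r)\ge 3F_k(\ell_0)$ for $k\ge3$ and can be checked directly for small $k$. I should also confirm $m>0$ via Proposition~\ref{thm:familia_par_impar}, which gives $m$ independent of the branch position.

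\textbf{Odd $\ell_0$.} The same computation gives $\Phi=-F_k(2r-\ell_0)<0$, so the triple is not minimal and one more $\nu_3$ step is needed. Write $\nu_3(F_k(\ell_0),\alpha_{k,r},F_k(\ell_0+2r))=\ord(F_k(2r-\ell_0),F_k(\ell_0),\alpha_{k,r})$. I expect this to be the main obstacle. Its two smallest entries are $F_k(\ell_0)$ and $F_k(2r-\ell_0)$, both at most $F_k(2r-1)$, while the largest entry is $\alpha_{k,r}=L_k(2r)/3$. So I need
\[
L_k(2r)\ \ge\ 9F_k(\ell_0)F_k(2r-\ell_0).
\]
By \eqref{eq:bound_product}, $F_k(\ell_0)F_k(2r-\ell_0)\le F_k(2r-1)$. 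Also $L_k(2r)=F_k(2r+1)+F_k(2r-1)\ge (k^2+2)F_k(2r-1)$. This is at least $9F_k(2r-1)$ precisely when $k^2\ge7$, i.e. $k\ge3$, and with $3\nmid k$ this means $k\ge4$, matching the hypothesis. (For $k\in\{1,2\}$ the odd classes are excluded anyway.)

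\textbf{Distinctness.} Finally, the $2r$ branches are distinct, because their middle entries $F_k(\ell_0+2r)$ are pairwise different for distinct $\ell_0$ and branch elements never coincide across classes. They lie in distinct trees because each is rooted at a distinct minimal triple, and \cite{SC} says each tree has a unique minimal root. Counting gives exactly $2r$ branches.
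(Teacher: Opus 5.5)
Your two minimality arguments are correct, and in part (1) you take a different route from the paper. The paper never computes $\Phi(\ell_0)=F_k(\ell_0+2r)-3\alpha_{k,r}F_k(\ell_0)$. Instead it writes $\mm(F_k(\ell_0),\alpha_{k,r},F_k(\ell_0+2r))=F_k(\ell_0)^2+\alpha_{k,r}^2+F_k(\ell_0+2r)\Phi(\ell_0)$, uses that $m$ is constant for even $\ell_0$ (Proposition~\ref{thm:familia_par_impar}) together with $\Phi(2r)=0$, and obtains positivity by a monotonicity argument. You evaluate $\Phi$ exactly. The cleanest justification is Lemma~\ref{lemma:Vajda} with $n=\ell_0$, $a=2r-\ell_0$, $b=2r$; dividing by $F_k(2r)$ gives
\[
\Phi(\ell_0)=(-1)^{\ell_0}F_k(2r-\ell_0),\qquad 1\le \ell_0\le 2r,
\]
with no negative indices. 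Lemma~\ref{lem:lucas} as stated needs $a\ge b$, so your version requires the extension $F_k(-n)=(-1)^{n+1}F_k(n)$. This single identity settles the even case and also identifies $\nu_3$ in the odd case, replacing the paper's expansion of $g(\ell_0)$ via D'Ocagne. From there, your inequality $L_k(2r)\ge 9F_k(\ell_0)F_k(2r-\ell_0)$ and its proof via \eqref{eq:bound_product} and $k^2\ge 7$ are exactly the paper's.

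Three peripheral claims are wrong, however.

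\textbf{Distinct trees.} The branches do not lie in distinct trees. Your own formula $\nu_3(F_k(\ell_0),\alpha_{k,r},F_k(\ell_0+2r))=\ord(F_k(2r-\ell_0),F_k(\ell_0),\alpha_{k,r})$ is invariant under $\ell_0\mapsto 2r-\ell_0$. So for $r\ge 3$ the odd classes $\ell_0$ and $2r-\ell_0$ descend from the two children of one and the same minimal triple. For $k=4$, $r=3$ one has $\alpha_{4,3}=1926$, and both $\ell_0=1$ and $\ell_0=5$ lead to the root $(1,305,1926)$. In general, for $k\ge 4$ the $2r$ branches occupy only $r+\frac{r+1}{2}$ trees. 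The statement only asserts $2r$ distinct branches, which your residue-class argument does establish, so drop the appeal to uniqueness of minimal roots.

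\textbf{The count for small $k$.} For $k\in\{1,2\}$ your parity observation leaves only the $r$ even classes, and $\ell=2r$ is the root rather than a branch member. So ``counting gives exactly $2r$'' contradicts your own argument; the count $2r$ is correct only for $k\ge 4$. Compare the paper's example $k=1$, $r=3$, which has three trees. The paper's proof leaves this point unaddressed as well.

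\textbf{Orderedness.} The claim $F_k(\ell_0)\le\alpha_{k,r}$ fails for $k\in\{1,2\}$, $\ell_0=2r$, $r\ge 3$. For example $F_1(6)=8>6=\alpha_{1,3}$, and the root is $(6,8,144)$. It is not needed, though: minimality $c\ge 3ab$ is symmetric in the two smaller entries.
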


\begin{proof}
(1) Assume that  $\ell_0\in\{2,4,\dots,2r\}$, and fix $r$ odd.
For the triples $(F_k(\ell_0),\alpha_{k,r},F_k(\ell_0+2r))$ we consider
\[
\Phi(\ell_0):=\Phi(F_k(\ell_0),\alpha_{k,r},F_k(\ell_0+2r))=F_k(\ell_0+2r)-3\alpha_{k,r}F_k(\ell_0).
\]

We claim that $\Phi(\ell_0)$ is strictly decreasing as $\ell_0$ runs over the even
integers. Indeed, 
\begin{align*}
\mm\bigl(F_k(\ell_0),\alpha_{k,r},F_k(\ell_0+2r)\bigr)
&=F_k(\ell_0)^2+\alpha_{k,r}^2+F_k(\ell_0+2r)^2
-3\alpha_{k,r}F_k(\ell_0)F_k(\ell_0+2r)\\
&=F_k(\ell_0)^2+\alpha_{k,r}^2
+F_k(\ell_0+2r)\bigl(F_k(\ell_0+2r)-3\alpha_{k,r}F_k(\ell_0)\bigr)\\
&=F_k(\ell_0)^2+\alpha_{k,r}^2+F_k(\ell_0+2r)\,\Phi(\ell_0).
\end{align*}
By Proposition~\ref{thm:familia_par_impar}, the quantity
$\mm\bigl(\alpha_{k,r},F_k(\ell_0),F_k(\ell_0+2r)\bigr)$ is constant when $\ell_0$
is even. Since $\alpha_{k,r}$ is fixed and both $F_k(\ell_0)$ and $F_k(\ell_0+2r)$
strictly increase with $\ell_0$, the identity above forces $\Phi(\ell_0)$ to be
strictly decreasing along even $\ell_0$.

Finally, when $\ell_0=2r$, Lemma~\ref{lemma:anitha} yields
\[
\Phi(2r)=F_k(4r)-3\alpha_{k,r}F_k(2r)
=F_k(4r)-\frac{F_k(4r)}{F_k(2r)}\,F_k(2r)=0.
\]
Since $\Phi(\ell_0)$ is strictly decreasing on even $\ell_0$, we have
$\Phi(\ell_0)>0$ for all $\ell_0\in\{2,4,\dots,2r\}$, hence the triples
$(F_k(\ell_0),\alpha_{k,r},F_k(\ell_0+2r))$ are minimal for these values.

Moreover, Lemma~\ref{lemma:enganche} shows that
\[
\nu_1\bigl(F_k(\ell_0),\alpha_{k,r},F_k(\ell_0+2r)\bigr)
=(\alpha_{k,r},F_k(\ell_0+2r),F_k(\ell_0+4r)),
\]
and repeated application of $\nu_2$ to this triple yields the branch
\[
\BB\bigl(\nu_1(F_k(\ell_0),\alpha_{k,r},F_k(\ell_0+2r))\bigr)
=\BB\bigl(\alpha_{k,r},F_k(\ell_0+2r),F_k(\ell_0+4r)\bigr).
\]

(2) Assume now that $\ell_0\in\{1,3,\dots,2r-1\}$, and that $k\ge4$.
We prove that the triple
\[
\nu_3\bigl(F_k(\ell_0),\alpha_{k,r},F_k(\ell_0+2r)\bigr)
=\bigl(3\alpha_{k,r}F_k(\ell_0)-F_k(\ell_0+2r),\;F_k(\ell_0),\;\alpha_{k,r}\bigr)
\]
is minimal. Since minimality is equivalent to the inequality
\[
\alpha_{k,r}\ \ge\ 3F_k(\ell_0)\bigl(3\alpha_{k,r}F_k(\ell_0)-F_k(\ell_0+2r)\bigr),
\]
and using $\alpha_{k,r}=F_k(4r)/(3F_k(2r))$, it suffices to show that
\[
\frac{F_k(4r)}{3F_k(2r)}
-\frac{3F_k(4r)}{F_k(2r)}\,F_k(\ell_0)^2
+3F_k(\ell_0)F_k(\ell_0+2r)\ge0.
\]
Equivalently, we must prove that
\begin{equation}\label{eqn:Gl}
g(\ell_0):=
F_k(4r)-9F_k(4r)F_k(\ell_0)^2+9F_k(2r)F_k(\ell_0)F_k(\ell_0+2r)\ge 0.
\end{equation}

Using the sum identity \eqref{eq:sum} with \(a=2r\) and \(b=\ell_0\), we have
\[
F_k(\ell_0+2r)=F_k(2r+1)F_k(\ell_0)+F_k(2r)F_k(\ell_0-1).
\]
Substituting this into \eqref{eqn:Gl} and expanding gives
\[
\begin{aligned}
g(\ell_0)
=\;&9F_k(2r)F_k(\ell_0)^2F_k(2r+1)
+9F_k(2r)^2F_k(\ell_0)F_k(\ell_0-1)\\
&\qquad+F_k(4r)-9F_k(4r)F_k(\ell_0)^2.
\end{aligned}
\]
By \eqref{eqn:fibonacci1}, we have $F_k(4r)=F_k(2r)\bigl(F_k(2r+1)+F_k(2r-1)\bigr)$, so the
terms containing $F_k(2r)F_k(\ell_0)^2F_k(2r+1)$ cancel, and we obtain
\[
g(\ell_0)=F_k(2r)\Bigl[
9F_k(\ell_0)\bigl(F_k(2r)F_k(\ell_0-1)-F_k(\ell_0)F_k(2r-1)\bigr)
+F_k(2r+1)+F_k(2r-1)
\Bigr].
\]

\noindent Apply D’Ocagne’s identity \eqref{eq:DOcagne} with $a=\ell_0-1$ and $b=2r-1$
(note that $b\ge a$ since $\ell_0\le 2r-1$). Then
\[
(-1)^{\ell_0-1}F_k(2r-\ell_0)
=F_k(2r-1)F_k(\ell_0)-F_k(2r)F_k(\ell_0-1).
\]
Since $\ell_0$ is odd, $(-1)^{\ell_0-1}=1$, hence
\[
F_k(2r)F_k(\ell_0-1)-F_k(\ell_0)F_k(2r-1)=-F_k(2r-\ell_0).
\]
Therefore,
\[
g(\ell_0)=F_k(2r)\Bigl[
-9F_k(\ell_0)F_k(2r-\ell_0)+F_k(2r+1)+F_k(2r-1)
\Bigr].
\]

\noindent Finally, by the product bound \eqref{eq:bound_product}, we have
\[
F_k(\ell_0)F_k(2r-\ell_0)\le F_k(2r-1)
\]
and using the recurrence $F_k(2r+1)=kF_k(2r)+F_k(2r-1)$ we obtain
\[
F_k(2r+1)+F_k(2r-1)=kF_k(2r)+2F_k(2r-1).
\]
Hence
\[
\begin{aligned}
g(\ell_0)
&\ge F_k(2r)\Bigl[-9F_k(2r-1)+kF_k(2r)+2F_k(2r-1)\Bigr]\\
&=F_k(2r)\Bigl[kF_k(2r)-7F_k(2r-1)\Bigr]\\
&=F_k(2r)\Bigl[(k^2-7)F_k(2r-1)+kF_k(2r-2)\Bigr],
\end{aligned}
\]
where in the last step we used $F_k(2r)=kF_k(2r-1)+F_k(2r-2)$.
For $k\ge4$ and $r\ge1$, both terms in brackets are non-negative (and not both zero),
so $g(\ell_0)>0$.
\end{proof}

\section{Non-existence of other infinite paths}
\label{section:6}

In this section, we prove Theorem~\ref{thm:only_principal}.
Recall that an infinite path is an infinite sequence of 
Markoff $m$-triples $\{(a_n,b_n,c_n)\}_{n\ge 0}$ such that each triple is obtained
from the previous one by applying one of the Vieta transformations (see \eqref{def:vieta}) $\nu_1$ or $\nu_2$.
Equivalently, for every $n\ge 1,$ we have
\begin{equation}
\nu_3(a_{n+1},b_{n+1},c_{n+1})=(a_n,b_n,c_n).
\end{equation}
We say that an infinite path is $(2,k)$-Fibonacci if each of its triples contains
at least two $k$-Fibonacci components. Finally, recall that a \emph{principal}
$(2,k)$-Fibonacci branch is a branch of the form
\[
\BB\bigl(\frac{F_k(4r)}{3F_k(2r)},\,F_k(N-r),\,F_k(N+r)\bigr),
\]
where $\bigl(\frac{F_k(4r)}{3F_k(2r)},F_k(N-r),F_k(N+r)\bigr)$ is an ordered
Markoff $m$-triple with $m>0$.

\begin{lemma}
\label{lemma:no_a}
Let $\{(a_n,b_n,c_n)\}$ be an infinite $(2,k)$-Fibonacci path which is not contained
in any principal $(2,k)$-Fibonacci branch. Then it contains only finitely many triples
for which $a_n$ is not a $k$-Fibonacci number, and only finitely many triples having
three $k$-Fibonacci components.
\end{lemma}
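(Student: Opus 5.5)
We treat the two assertions separately, using the structure of infinite paths together with the classification of Theorem~\ref{the:nonminimal_general} and the results of \cite{ACMRS25a,ACMRS25b} on triples with three $k$-Fibonacci components.

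\emph{Finitely many triples with three $k$-Fibonacci components.} In the case $m>0$, \cite[Theorem 1.1]{ACMRS25b} (for $k\ge2$) and the corresponding result of \cite{ACMRS25a} (for $k=1$) show that a Markoff $m$-triple with three $k$-Fibonacci components must be minimal, or at least must belong to an explicitly described finite family, possibly together with principal branches. Along an infinite path only the initial triple $(a_0,b_0,c_0)$ can be minimal: for $n\ge1$ we have $\nu_3(a_n,b_n,c_n)=(a_{n-1},b_{n-1},c_{n-1})$ with positive entries, so $(a_n,b_n,c_n)$ is non-minimal. Moreover the triples of a path are pairwise distinct, since $c_n$ is strictly increasing. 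Hence only finitely many triples of the path can have three $k$-Fibonacci components. If the cited results also allow infinite families of such triples, we will check that these families lie in principal branches, which the hypothesis excludes.

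\emph{Finitely many triples with $a_n$ not $k$-Fibonacci.} Suppose that $a_n$ is not a $k$-Fibonacci number. Then $b_n$ and $c_n$ are $k$-Fibonacci numbers, and for $n\ge1$ the triple is non-minimal. Theorem~\ref{the:nonminimal_general} then gives $(a_n,b_n,c_n)=(\alpha_{k,r},F_k(N-r),F_k(N+r))$ with $r$ odd, $3\nmid k$ and $N>3r$. By Lemma~\ref{lemma:enganche}(a), applying $\nu_2$ to this triple produces the next element of the same principal branch. It remains to analyse the successor $(a_{n+1},b_{n+1},c_{n+1})$ in the path.
\begin{itemize}
\item If it is the $\nu_2$-image, it again has this form, lies in a principal branch, and $a$ stays equal to $\alpha_{k,r}$.
\item If it is the $\nu_1$-image, it equals $(F_k(N-r),F_k(N+r),3F_k(N-r)F_k(N+r)-\alpha_{k,r})$. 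Its first two entries are $k$-Fibonacci numbers.
\end{itemize}

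The key step is to show that a path which is not contained in a principal branch has only finitely many such $n$. Once the path leaves $\alpha$-form through a $\nu_1$-step, the next triples have $a$-entry equal to a $k$-Fibonacci number, and to return to an $a_j$ that is not $k$-Fibonacci we need $b_j$ and $c_j$ to be $k$-Fibonacci again. At that point the triple is of $\alpha$-form, so its $\nu_3$-predecessor is a triple $\bigl(\alpha_{k,r},F_k(N-r),F_k(N+r)\bigr)$ (if the step was $\nu_2$), or the minimal triple of Theorem~\ref{prop:branches} (if the step was $\nu_1$). Each such re-entry forces the path to agree, from some index on, with a principal branch, unless the path leaves it again through a $\nu_1$-step.

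I expect the main obstacle to be controlling paths that leave and re-enter principal branches infinitely often. After the $\nu_1$-step above, the triple $(F_k(N-r),F_k(N+r),c')$ has $c'=3F_k(N-r)F_k(N+r)-\alpha_{k,r}$. By Lemma~\ref{lemma:F(c)<=3F(n)F(m)}, $c'$ is strictly between consecutive $k$-Fibonacci numbers for large $N$, so $c'$ is not a $k$-Fibonacci number. All later triples contain $c'$ or larger entries generated from it, and the aim is to show that, beyond a bounded number of steps, at most one component is a $k$-Fibonacci number. This would contradict the $(2,k)$-Fibonacci property unless only finitely many such excursions occur.

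To carry this out, I would first use the gap estimate between $3F_k(x)F_k(y)$ and the nearest $k$-Fibonacci numbers to show that $\nu_1$ or $\nu_2$ applied to a triple with a non-Fibonacci largest entry keeps two Fibonacci entries only in a rigid configuration. I would then show that this configuration is again of $\alpha$-form, and iterate. Combined with the monotonicity of $c_n$, this bounds the number of indices $n$ for which $a_n$ is not a $k$-Fibonacci number.
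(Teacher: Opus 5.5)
Your proof of the second assertion has a genuine gap. The step you flag as the main obstacle, excluding paths that leave and re-enter principal branches infinitely often, is never carried out. The forward programme you sketch, showing that after a $\nu_1$-exit the path cannot keep two $k$-Fibonacci entries, is essentially the whole of Theorem~\ref{thm:only_principal}: the configuration such a path is forced into is the alternating form \eqref{eq:generic_branch}, not an $\alpha$-form triple. Excluding it takes Lemmas~\ref{lemma:b_bound}--\ref{lemma:b_bound_strict}, and that argument uses the present lemma as input. So your proposed iteration does not close.

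The missing idea is elementary: $\nu_3$ is a map, so a path is determined backwards by any one of its triples. Let $n\ge 2$ with $a_n$ not $k$-Fibonacci. Then $(a_n,b_n,c_n)=(\alpha_{k,r},F_k(N-r),F_k(N+r))$ with $N>3r$. The identity $3\alpha_{k,r}F_k(\ell+2r)-F_k(\ell)=F_k(\ell+4r)$ from the proof of Lemma~\ref{lemma:enganche}, taken with $\ell=N-3r$, gives
\[
(a_{n-1},b_{n-1},c_{n-1})=\nu_3(a_n,b_n,c_n)=\ord\bigl(F_k(N-3r),\alpha_{k,r},F_k(N-r)\bigr).
\]
This is again $(\alpha_{k,r},F_k(N-3r),F_k(N-r))$ unless $F_k(N-3r)<\alpha_{k,r}$. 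In that case it is one of the triples $(F_k(\ell_0),\alpha_{k,r},F_k(\ell_0+2r))$ of Theorem~\ref{prop:branches}, which is minimal or has minimal $\nu_3$-image, so it can only sit at index at most $1$.

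Hence the indices with $a_n$ not $k$-Fibonacci form an initial segment of the path, starting at index at most $2$, and a re-entry is impossible. You wrote down exactly this predecessor computation but drew the wrong conclusion: the predecessor of a ``re-entry'' triple already has a non-$k$-Fibonacci first entry, so there was no exit. If the segment is infinite, every later step is a $\nu_2$-step, since a $\nu_1$-step would give $a_{n+1}=b_n=F_k(N-r)$. The path then eventually runs along a principal branch, contrary to the hypothesis.

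Your first assertion has the same hole. The cited classification does not confine the non-minimal triples with three $k$-Fibonacci components to a finite family. It contains the infinite Pell family $(2,F_2(2n),F_2(2n+2))$ with $m=8$, and there $a=2=F_2(2)$ is a $k$-Fibonacci number, so these triples are not covered by the second assertion either. Your fallback, that these triples lie in principal branches ``which the hypothesis excludes'', does not suffice. The hypothesis only says the path is not \emph{contained} in a principal branch, not that it meets one finitely often. You need to note that $\alpha_{2,1}=2$, so these triples have the form $(\alpha_{2,1},F_2(N-1),F_2(N+1))$. The same backward argument then shows that a path containing infinitely many of them eventually runs along $\BB(2,2,12)$.
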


\begin{proof}
If $a_n$ is not a $k$-Fibonacci number for some $n$, then by Theorem \ref{the:nonminimal_general} the triple must be of the form $(a_n,b_n,c_n)=\left ( \frac{F_k(4r)}{3F_k(2r)}, F_k(N-r),F_k(N+r)\right)$. 

From the discussion in Section \ref{section:5}, we have generically
$$(a_{n-1},b_{n-1},c_{n-1})=\nu_3(a_n,b_n,c_n) =\left ( \frac{F_k(4r)}{3F_k(2r)}, F_k(N-3r),F_k(N-r)\right)$$
except maybe for $n=2$, where the expressions for $a_1$ and $b_1$ might be switched in the minimal element $(a_1,b_1,c_1)$.

This implies that if $a_n$ is not $k$-Fibonacci for some $n$, then $a_j$ is also not $k$-Fibonacci for all $j$ with $1<j\le n$. As a consequence, if there is an infinite number of $n$ such that $a_n$ is not $k$-Fibonacci, then $a_n$ is not $k$-Fibonacci for every $n>1$, and the whole path
would eventually coincide with a principal $(2,k)$-Fibonacci branch, contrary to
our assumption.

Finally, by \cite[Theorem 1.1]{ACMRS25b}, if the path contains an infinite number of non-minimal triple
with three $k$-Fibonacci components, then it must be of the form $$(F_2(2)=2,F_2(2n),F_2(2n+2)),$$ which is also of the form given by Theorem \ref{the:nonminimal_general}, taking $r=1$ and $N=n+1$. Thus, repeating the previous argument, we can see that it only contains a finite number of these triples. As the path, by construction, can have at most one minimal triple, the result follows.
\end{proof}

Using the previous lemma, we can explicitly describe the structure of an infinite
$(2,k)$-Fibonacci path which is not contained in any principal branch.
Let $\{(a_n,b_n,c_n)\}$ be such a path. By Lemma~\ref{lemma:no_a}, after removing
finitely many initial triples if necessary, we may assume without loss of generality
that $a_n$ is always a $k$-Fibonacci number, that for every $n$ at least one of
$b_n$ or $c_n$ is not a $k$-Fibonacci number, and that all triples are non-minimal.

First observe that if $c_n$ is not a $k$-Fibonacci number for some $n$, then both
possible children of $(a_n,b_n,c_n)$ have $c_n$ as their second entry, so
$(a_{n+1},b_{n+1},c_{n+1})$ has a non-$k$-Fibonacci component in
$b_{n+1}=c_n$. Thus the path contains infinitely many triples for which $b_n$
is not a $k$-Fibonacci number.

Suppose now that $b_n$ is not a $k$-Fibonacci number for some $n$. Then, by
construction, $\nu_1(a_n,b_n,c_n)$ has $b_n$ as its smallest entry. Since
$a_{n+1}$ is a $k$-Fibonacci number, it follows that
\[
(a_{n+1},b_{n+1},c_{n+1})\neq \nu_1(a_n,b_n,c_n).
\]
Therefore,
\[
(a_{n+1},b_{n+1},c_{n+1})
=\nu_2(a_n,b_n,c_n)
=(a_n,c_n,3a_nc_n-b_n),
\]
and, since $a_n$ and $c_n$ are $k$-Fibonacci numbers, the entry $c_{n+1}$ is not
a $k$-Fibonacci number. In other words, the path must eventually alternate between
a triple $(a,b,c)$ where $b$ is not a $k$-Fibonacci number and a triple
$(a',b',c')=\nu_2(a,b,c)$ where $c'$ is not a $k$-Fibonacci number.

Therefore, after shifting the initial point of the path and reindexing if necessary, we may assume without loss of generality that such a path has the form
\begin{equation}
\label{eq:generic_branch}
\begin{aligned}
(a_{2n},b_{2n},c_{2n})&=(F_k(u_n),x_n,F_k(v_n)),\\
(a_{2n+1},b_{2n+1},c_{2n+1})&=(F_k(u_n),F_k(v_n),x_{n+1}),
\end{aligned}
\end{equation}
where
\[
x_{n+1}=3F_k(u_n)F_k(v_n)-x_n.
\]
Moreover, the next even-indexed triple is given by either
\[
(a_{2n+2},b_{2n+2},c_{2n+2})
=\nu_1(a_{2n+1},b_{2n+1},c_{2n+1})
=(F_k(u_{n+1}),x_{n+1},F_k(v_{n+1})),
\]
with \(u_{n+1}=v_n\), or
\[
(a_{2n+2},b_{2n+2},c_{2n+2})
=\nu_2(a_{2n+1},b_{2n+1},c_{2n+1})
=(F_k(u_{n+1}),x_{n+1},F_k(v_{n+1})),
\]
with \(u_{n+1}=u_n\).

Let us now find some lower and upper bounds for the non-$k$-Fibonacci terms $x_n$ in one such sequence.

\begin{lemma}
\label{lemma:b_bound}
Let $\{(a_n,b_n,c_n)\}_{n\ge0}$ be a path of the form \eqref{eq:generic_branch}
which is not contained in any principal $(2,k)$-Fibonacci branch. Then $\{u_n\}_{n\ge0}$ is non-decreasing, whereas
$\{x_n\}_{n\ge0}$ and $\{v_n\}_{n\ge0}$ are increasing. Moreover, for all sufficiently
large $n$, the following hold:
\begin{enumerate}
\item\label{lemma:b_bound_k12a}
If $k=1$ or $k=2$, then $x_n\in \bigl(F_k(v_n-u_n-1),\,F_k(v_n-u_n)\bigr).$

\item\label{lemma:b_bound_k12b}
If $k=1$ or $k=2$, then
$
x_{n+1}\in \bigl(F_k(u_n+v_n),\,F_k(u_n+v_n+1)\bigr).
$

\item\label{lemma:b_bound_kg3a}
If $k\ge 3$, then
$
x_n\in \bigl(F_k(v_n-u_n),\,F_k(v_n-u_n+1)\bigr).
$

\item\label{lemma:b_bound_kg3b}
If $k\ge 3$, then
$
x_{n+1}\in \bigl(F_k(u_n+v_n-1),\,F_k(u_n+v_n)\bigr).
$
\end{enumerate}
In particular, all the intervals above are open, since 
$x_n$  and $x_{n+1}$ are not $k$-Fibonacci numbers.
\end{lemma}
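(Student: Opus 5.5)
Monotonicity comes first. By construction $u_{n+1}\in\{u_n,v_n\}$, and $v_n\ge u_n$ because the triples are ordered. Hence $u_{n+1}\ge u_n$.

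For $x$ and $v$ we use that every triple of the path is non-minimal. A Vieta step $\nu_1$ or $\nu_2$ applied to an ordered non-minimal triple produces a strictly larger largest entry. So $c_{2n+1}=x_{n+1}>c_{2n}=F_k(v_n)\ge x_n$, which shows that $x_n$ is increasing. Similarly $c_{2n+2}=F_k(v_{n+1})>c_{2n+1}=x_{n+1}>F_k(v_n)$, which forces $v_{n+1}>v_n$.

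The interval estimates rest on the two Markoff relations
\[
\mm(F_k(u_n),x_n,F_k(v_n))=m,\qquad x_{n+1}=3F_k(u_n)F_k(v_n)-x_n .
\]
First view the even triple as an ordered non-minimal triple with middle entry $x_n$. Minimality fails, so $F_k(v_n)<3F_k(u_n)x_n$, which gives $x_n>F_k(v_n)/(3F_k(u_n))$. From the quadratic, $x_n$ is the smaller root of $\mm(F_k(u_n),X,F_k(v_n))=m$, and Proposition~\ref{prop:smallerinterval} applies to it. Thus $x_n$ lies in an interval of length $<F_k(u_n)/F_k(v_n)$ just above $F_k(v_n)/(3F_k(u_n))$. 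As $v_n\to\infty$, and $v_n-u_n\to\infty$ once we exclude bounded configurations, this interval length tends to $0$. Hence $x_n=F_k(v_n)/(3F_k(u_n))+o(1)$.

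Next compare $F_k(v_n)/(3F_k(u_n))$ with $k$-Fibonacci numbers using Binet's formula \eqref{eqn:binet} and the asymptotics \eqref{eqn:usefulasympt}. We have $F_k(v)/F_k(u)\approx\phi_k^{\,v-u}$ and $F_k(v-u)\approx\phi_k^{\,v-u}/D_k$. So $x_n\approx F_k(v_n-u_n)\cdot D_k/3$ up to a factor tending to $1$.

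For $k\ge3$, $D_k=\sqrt{k^2+4}$ satisfies $3<D_k<3\phi_k$, since $\phi_k>k$. This places $x_n$ strictly between $F_k(v_n-u_n)$ and $F_k(v_n-u_n+1)$, which is item~\ref{lemma:b_bound_kg3a}. For $k\in\{1,2\}$, $D_k\in\{\sqrt5,\sqrt8\}<3$ and $D_k/3>1/\phi_k$. This gives item~\ref{lemma:b_bound_k12a}, with the shift by one index.

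For $x_{n+1}=3F_k(u_n)F_k(v_n)-x_n$, the subtracted $x_n$ is negligible relative to $3F_k(u_n)F_k(v_n)\approx 3\phi_k^{\,u_n+v_n}/D_k^2$. Comparing with $F_k(u_n+v_n)\approx\phi_k^{\,u_n+v_n}/D_k$ means comparing $3/D_k$ with $1$ and with $\phi_k^{\pm1}$, giving items~\ref{lemma:b_bound_k12b} and~\ref{lemma:b_bound_kg3b}. To avoid purely asymptotic arguments, I would instead turn these into exact inequalities via Lemma~\ref{lemma:F(c)<=3F(n)F(m)}. That lemma gives precisely $F_k(u+v)\le 3F_k(u)F_k(v)<F_k(u+v+1)$ for $k\in\{1,2\}$, up to its equality cases, and $F_k(u+v-1)<3F_k(u)F_k(v)\le F_k(u+v)$ for $k\ge3$, with the same caveat. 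Subtracting the small $x_n$ then keeps $x_{n+1}$ inside the open interval once $n$ is large, because the gaps are of order $F_k(u+v)$ while $x_n$ is much smaller.

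The main obstacle is making the approximations uniform when $u_n$ stays small, including bounded $u_n$, as opposed to only when $v_n-u_n$ is large. It also requires checking that $x_n$ never lands exactly on a boundary. For the first issue I would use the exact identities $F_k(v)=F_k(u)L_k(v-u)-(-1)^{v-u}F_k(u-(v-u))$ from Lemma~\ref{lem:lucas} and $L_k(j)=D_kF_k(j)+O(\phi_k^{-j})$. These control the error terms explicitly. The boundary case is excluded simply because $x_n$ is not a $k$-Fibonacci number, which is what makes the intervals open.
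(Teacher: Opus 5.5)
Your monotonicity argument is fine, but the interval estimates have a genuine gap. The step that fails is ``Proposition~\ref{prop:smallerinterval} applies to it''. That proposition bounds the \emph{smallest} entry of an ordered non-minimal triple when the two largest entries are fixed; its proof uses $a\le b$ to get $a^2+\tfrac{5}{3}-2b^2<0$. You apply it with the \emph{middle} entry $x_n$ varying and $b=F_k(u_n)$, $c=F_k(v_n)$ fixed, and there its conclusion is false. Write $x_n=\frac{c}{3b}(1+\delta_n)$ with $\delta_n>0$. Then $\mm(b,x_n,c)=m$ becomes $\delta_n-\frac{(1+\delta_n)^2}{9b^2}=\frac{b^2-m}{c^2}$, hence
\[
x_n-\frac{F_k(v_n)}{3F_k(u_n)}>\frac{F_k(v_n)}{27F_k(u_n)^3}-\frac{m}{3F_k(u_n)F_k(v_n)},
\]
which is unbounded whenever $v_n-3u_n\to\infty$. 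So $x_n=F_k(v_n)/(3F_k(u_n))+o(1)$ is wrong. Only the multiplicative statement $x_n\sim F_k(v_n)/(3F_k(u_n))$ holds, and only when $u_n\to\infty$. In that regime your comparisons of $D_k/3$ and $3/D_k$ with $1$ and $\phi_k^{\pm1}$ have strict margins, so the argument can be salvaged. You would still need to prove $v_n-u_n\to\infty$ directly; you cannot quote Corollary~\ref{cor:c-a_limit}, since the paper deduces it from this very lemma.

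The real obstruction is the case you flag yourself, $u_n=A$ for all large $n$, where the path is eventually a single $\nu_2$-branch with first entry $b=F_k(A)$. Lemma~\ref{lem:lucas} does not address it, and as stated it even requires $u_n\ge v_n-u_n$. In this case $x_n/F_k(v_n)\to\lambda_b:=\frac{3b-\sqrt{9b^2-4}}{2}$, so $x_n/F_k(v_n-A)\to\lambda_b\phi_k^{A}$, not $D_k/3$. Also, for $k\le2$, $x_n$ is of the same order as the gap $3F_k(u_n)F_k(v_n)-F_k(u_n+v_n)$ that your $x_{n+1}$ argument must beat. Worse, for $k=1$, $b=1$ one has $\lambda_b=\phi_1^{-2}$, and for $k=2$, $b=F_2(2)=2$ one has $\lambda_b=\phi_2^{-2}$. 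The limiting ratios then fall exactly on endpoints of the target intervals; these are the principal branches with $\alpha_{1,1}=1$ and $\alpha_{2,1}=2$. So no asymptotic estimate can decide on which side $x_n$ or $x_{n+1}$ lies, and exact arithmetic input is unavoidable.

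The paper supplies this input with no asymptotics at all, uniformly in $u_n$:
\begin{itemize}
\item One endpoint comes from non-minimality together with Lemma~\ref{lemma:F(c)<=3F(n)F(m)}, which you already have.
\item The other comes from the sign of $\mm$ at the neighbouring all-Fibonacci triple, imported from the earlier papers: $\mm(F_k(u_n),F_k(v_n-u_n),F_k(v_n))\le0$ (with $v_n-u_n+1$ if $k\ge3$) and $\mm(F_k(u_n),F_k(v_n),F_k(u_n+v_n))<0$, combined with monotonicity of $\mm$ in the varying entry.
\end{itemize}

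Alternatively, you can rule out bounded $u_n$ directly. Then $y_n=F_k(v_n)$ satisfies $y_{n+1}=(9b^2-2)y_n-y_{n-1}$. This forces $v_{n+1}-v_n=d$ eventually, with $d/2$ even and $L_k(d/2)=3b$. By Lemma~\ref{lem:lucas}, $x_{n+1}=\bigl(F_k(v_n)+F_k(v_n+d)\bigr)/(3b)=F_k(v_n+d/2)$, contradicting that $x_{n+1}$ is not a $k$-Fibonacci number. After that, your corrected multiplicative asymptotics go through.
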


\begin{proof}
$a_n$ is not decreasing and $b_n$ and $c_n$ are increasing due to the construction using  repeated application of the Vieta transformations $\nu_1$ and $\nu_2$, and, thus, so are the indices $u_n$, $x_n$ and $v_n$.

Suppose first that $k=1$ or $k=2$. By Lemma \ref{lemma:F(c)<=3F(n)F(m)}, we know that for sufficiently large $n$ it is impossible that $x_n< F_k(v_n-u_n-1)$ since this would imply that $c_{2n}=F_k(v_n)>3F_k(u_n)F_k(v_n-u_n-1)>3a_{2n}b_{2n}$, contradicting that the triple is non-minimal. Thus, we have that $x_n > F_k(v_n-u_n-1)$. 

On the other hand, by \cite[Proposition 3.1]{ACMRS25a} in the case $k=1$ or by \cite[Lemma 3.1.(1) and Lemma 3.2]{ACMRS25b} in the case $k=2$ we have that $\mm(F_k(u_n),F_k(v_n-u_n),F_k(v_n))\le 0$. As $\mm(x,y,z)$ is monotonically decreasing in $y$ and $\mm(F_k(u_n),x_n,F_k(v_n))=m>0$, this implies that $x_n<F_k(v_n-u_n)$.

We can now apply a similar argument for $(a_{2n+1},b_{2n+1},c_{2n+1})=(F_k(u_n),F_k(v_n),x_{n+1})$. By Lemma \ref{lemma:F(c)<=3F(n)F(m)}, it is impossible that $x_{n+1}>F_k(u_n+v_n+1)$ since this would imply that $x_{n+1}>3F_k(u_n)F_k(v_n)$ and the triple would be non-minimal. Consequently, we have that $x_{n+1}< F_k(u_n+v_n+1)$. Similarly, by \cite[Proposition 3.1]{ACMRS25a} and \cite[Lemma 3.1.(1) and Lemma 3.2]{ACMRS25b}, we have that $\mm(F_k(u_n),F_k(v_n),F_k(u_n+v_n))<0$ for sufficiently large $n$ and $\mm(F_k(u_n),F_k(v_n),x_{n+1})=m>0$, and since $\mm(x,y,z)$ is monotonically decreasing in $z$, we have $x_{n+1}>F_k(u_n+v_n)$.

Analogously, for $k\ge 3$, non-minimality of the triple $(F_k(u_n),x_n,F_k(v_n))$ and Lemma \ref{lemma:F(c)<=3F(n)F(m)}(iii) imply that $x_n>F_k(v_n-u_n)$ and $x_{n+1}<F_k(v_n+u_n+1)$ and \cite[Lemma 3.1.(2)]{ACMRS25b} implies that $\mm(F_k(u_n),F_k(v_n-u_n+1),F_k(v_n))\le 0$ and $\mm(F_k(u_n),F_k(v_n),F_k(u_n+v_n))<0$, so $x_n<F_k(v_n-u_n+1)$ and $x_{n+1}>F_k(u_n+v_n)$.
\end{proof}

\begin{corol}
\label{cor:c-a_limit}
For a path of the form \eqref{eq:generic_branch} which is not contained in any principal $(2,k)$-Fibonacci branch,  we have $\lim_{n\to\infty} (v_n-u_n) = \infty$.
\end{corol}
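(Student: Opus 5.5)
The plan is to combine the two-sided bounds of Lemma~\ref{lemma:b_bound} with the fact that the non-$k$-Fibonacci terms $x_n$ grow without bound. By Lemma~\ref{lemma:b_bound}, for all sufficiently large $n$ we have in every case
\[
x_n < F_k(v_n-u_n+1).
\]
For $k\ge 3$ this is part (\ref{lemma:b_bound_kg3a}). For $k\in\{1,2\}$, part (\ref{lemma:b_bound_k12a}) gives the sharper bound $x_n<F_k(v_n-u_n)\le F_k(v_n-u_n+1)$. Hence, once we know $x_n\to\infty$, we get $F_k(v_n-u_n+1)\to\infty$. Since $F_k$ takes only finitely many values below any given bound on non-negative indices, this forces $v_n-u_n\to\infty$.

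It remains to justify $x_n\to\infty$, and I will do so directly from the shape \eqref{eq:generic_branch} rather than relying only on the monotonicity claim in Lemma~\ref{lemma:b_bound}. The triple $(a_{2n},b_{2n},c_{2n})=(F_k(u_n),x_n,F_k(v_n))$ is ordered, so $x_n\le F_k(v_n)$. This inequality is strict because $x_n$ is not a $k$-Fibonacci number. Likewise, the triple $(a_{2n+1},b_{2n+1},c_{2n+1})=(F_k(u_n),F_k(v_n),x_{n+1})$ is ordered, so $F_k(v_n)\le x_{n+1}$, again strictly. Thus
\[
x_n<F_k(v_n)<x_{n+1}
\]
for all $n$. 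The $x_n$ form a strictly increasing sequence of positive integers, so $x_n\ge n$, and in particular $x_n\to\infty$.

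Combining the two paragraphs, $F_k(v_n-u_n+1)>x_n\ge n$ for large $n$, and therefore $v_n-u_n\to\infty$. I do not expect a real obstacle. The only points needing care are these: the bounds of Lemma~\ref{lemma:b_bound} hold only for large $n$, which is harmless for a limit statement; and the ordering of the triples must be used, which is guaranteed because all triples in the path are ordered Markoff $m$-triples obtained through $\nu_1,\nu_2$.
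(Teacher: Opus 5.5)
Your proof is correct, but it takes a different route from the paper's. The paper localises $x_{n+1}$ twice using Lemma~\ref{lemma:b_bound}. The first localisation comes from the triple $(F_k(u_n),F_k(v_n),x_{n+1})$, via parts (2)/(4). The second comes from $(F_k(u_{n+1}),x_{n+1},F_k(v_{n+1}))$, via parts (1)/(3) at index $n+1$. Both are open intervals between consecutive $k$-Fibonacci numbers, so their indices must coincide. This gives the exact relations \eqref{eq:cn-an_recurrence_k12} and \eqref{eq:cn-an_recurrence_kge3}: $v_{n+1}-u_{n+1}=u_n+v_n+1$ for $k\in\{1,2\}$ and $v_{n+1}-u_{n+1}=u_n+v_n-1$ for $k\ge3$. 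Strict growth of $v_n-u_n$ follows at once.

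You instead use only the upper bound $x_n<F_k(v_n-u_n+1)$ together with the unboundedness of $x_n$. You derive the latter correctly from the ordering $x_n<F_k(v_n)<x_{n+1}$, which is strict because $x_n$ is not a $k$-Fibonacci number. Your route is lighter, since it needs neither the lower bounds nor the bounds on $x_{n+1}$ in Lemma~\ref{lemma:b_bound}. The one upper bound you use could even be replaced by the elementary estimate $x_n<F_k(v_n)/F_k(u_n)\le(1+k^{-2})F_k(v_n-u_n+1)$. This follows from $x_nx_{n+1}=F_k(u_n)^2+F_k(v_n)^2-m$, from $x_{n+1}=3F_k(u_n)F_k(v_n)-x_n\ge(3F_k(u_n)-1)F_k(v_n)$, and from \eqref{eq:bound_product2}. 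With that substitution the corollary would not depend on Lemma~\ref{lemma:b_bound} at all.

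What your argument does not give is the exact recurrence for $v_n-u_n$, and that recurrence is the real payoff of the paper's proof. The relations \eqref{eq:cn-an_recurrence_k12} and \eqref{eq:cn-an_recurrence_kge3} are used in the proof of Theorem~\ref{thm:only_principal}. There they convert the bound of Lemma~\ref{lemma:b_bound_strict} at index $n+1$ into one with exponent $v_n+u_n\pm\frac12\pm\varepsilon$. So your proof fully settles the corollary as stated, but if it replaced the paper's proof, those two relations would have to be established separately.
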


\begin{proof}
Assume first that $k=1$ or $k=2$. By Lemma \ref{lemma:b_bound} for all $n$ sufficiently large we have that
$$x_{n+1}\in (F_k(u_n+v_n),F_k(u_n+v_n+1)),$$
and applying \ref{lemma:b_bound}(\ref{lemma:b_bound_k12a}) for the index $n+1,$ we also have
$$x_{n+1}\in (F_k(v_{n+1}-u_{n+1}-1),F_k(v_{n+1}-u_{n+1})),$$
and, since both are intervals of consecutive $k-$Fibonacci numbers, we must have that
\begin{equation}
    \label{eq:cn-an_recurrence_k12}
    v_{n+1}-u_{n+1}-1=u_n+v_n,
\end{equation}
but then
$$v_{n+1}-u_{n+1}=u_n+v_n+1>v_n-u_n$$
for each sufficiently large $n$. Thus, $\lim_{n\to\infty} (v_n-u_n)=\infty$. Analogously, if $k\ge 3$, by Lemma \ref{lemma:b_bound}(\ref{lemma:b_bound_kg3b}),
$$x_{n+1}\in (F_k(u_n+v_n-1),F_k(u_n+v_n)),$$
and by Lemma \ref{lemma:b_bound}(\ref{lemma:b_bound_kg3a}) applied to the index $n+1$,
$$x_{n+1}\in (F_k(v_{n+1}-u_{n+1}),F_k(v_{n+1}-u_{n+1}+1)),$$
and, by the same reasoning,
\begin{equation}
    \label{eq:cn-an_recurrence_kge3}
    v_{n+1}-u_{n+1}=u_n+v_n-1>v_n-u_n.
\end{equation}
Hence $\lim_{n\to\infty} (v_n-u_n) = \infty$.

\end{proof}

\begin{lemma}
\label{lemma:b_bound_strict}
Recall that $D_k = \sqrt{k^2+4}$. Assume that $k=1$ or $k=2$. Then for any path of the form \eqref{eq:generic_branch} as above, there exist $\varepsilon>0$ and $N>0$ such that one of the following holds:
\begin{itemize}
\item[i)] \label{lemma:b_bound_strict_i} $x_n\in \left( \frac{1}{D_k}\phi_k^{v_n-u_n-\frac{1}{2}+\varepsilon}, F_k(v_n-u_n)\right)\,, \quad \forall n\ge N$.
\item[ii)] \label{lemma:b_bound_strict_ii} $x_n\in \left(F_k(v_n-u_n-1), \frac{1}{D_k}\phi_k^{v_n-u_n-\frac{1}{2}-\varepsilon}\right)\,, \quad \forall n\ge N.$
\end{itemize}
Analogously, if $k\ge 3$ there exist $\varepsilon>0$ and $N>0$ such that one of the following holds:
\begin{itemize}
\item[iii)] \label{lemma:b_bound_strict_iii} $x_n\in \left( \frac{1}{D_k}\phi_k^{v_n-u_n+\frac{1}{2}+\varepsilon}, F_k(v_n-u_n+1)\right)\,, \quad \forall n\ge N$.
\item[iv)] \label{lemma:b_bound_strict_iv} $x_n\in \left(F_k(v_n-u_n), \frac{1}{D_k}\phi_k^{v_n-u_n+\frac{1}{2}-\varepsilon}\right)\,, \quad \forall n\ge N$.
\end{itemize}
\end{lemma}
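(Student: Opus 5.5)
The plan is to track the normalised exponent
\[
t_n:=\log_{\phi_k}\!\bigl(D_k x_n\bigr)-(v_n-u_n).
\]
By Lemma~\ref{lemma:b_bound} and \eqref{eqn:usefulasympt}, $t_n$ eventually lies in $(-1,0)$ up to an $o(1)$ error for $k\in\{1,2\}$, and in $(0,1)$ up to $o(1)$ for $k\ge3$. The threshold in the statement is $t=-\tfrac12$, respectively $t=+\tfrac12$. So it suffices to show that $t_n$ converges to a limit $L$ different from the threshold. Then some $\varepsilon>0$ fits between $L$ and the threshold, and from some $N$ on, all $x_n$ lie on one side of it. That side is i) or ii) (resp.\ iii) or iv)), and the other endpoint is supplied by Lemma~\ref{lemma:b_bound}.

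To compute the limit I use the recurrence $x_{n+1}=3F_k(u_n)F_k(v_n)-x_n$. By Lemma~\ref{lemma:b_bound}, $x_n<F_k(v_n-u_n+1)$, so
\[
\frac{x_n}{3F_k(u_n)F_k(v_n)}\ \ll\ \frac{F_k(v_n-u_n+1)}{F_k(u_n)F_k(v_n)}\longrightarrow 0
\]
because $v_n\to\infty$. Hence
\[
x_{n+1}=3F_k(u_n)F_k(v_n)\,(1+o(1)).
\]
Next, combine this with the index identities from the proof of Corollary~\ref{cor:c-a_limit}: $v_{n+1}-u_{n+1}=u_n+v_n+1$ for $k\le2$, and $v_{n+1}-u_{n+1}=u_n+v_n-1$ for $k\ge3$. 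Using also $F_k(v_n)=\phi_k^{v_n}D_k^{-1}(1+o(1))$, I get
\[
t_{n+1}=\log_{\phi_k}\!\Bigl(\frac{3F_k(u_n)}{\phi_k^{u_n}}\Bigr)\mp 1+o(1),
\]
with $-1$ for $k\le 2$ and $+1$ for $k\ge 3$.

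The subtle point is that $u_n$ need not tend to infinity. However, $u_n$ is non-decreasing by Lemma~\ref{lemma:b_bound}, so either $u_n$ is eventually constant, equal to some $u^*$, or $u_n\to\infty$. The limit is then
\[
L=\log_{\phi_k}\!\bigl(3F_k(u^*)\phi_k^{-u^*}\bigr)\mp1
\qquad\text{or}\qquad
L=\log_{\phi_k}(3/D_k)\mp1,
\]
respectively. In either case the limit exists and is unique, which is exactly what a single choice of side needs.

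The main obstacle is excluding that $L$ equals the threshold. In both cases this comes down to an algebraic identity.
\begin{itemize}
\item If $u_n\to\infty$: for $k\le2$, the equality $L=-\tfrac12$ is equivalent to $9=D_k^2\phi_k^{-1}$. For $k\ge3$, the equality $L=\tfrac12$ is equivalent to $9\phi_k=D_k^2$.
\item If $u_n\equiv u^*$: the equality is equivalent to $9F_k(u^*)^2=\phi_k^{2u^*\mp1}$.
\end{itemize}
In every case the left-hand side is rational, while $\phi_k^{j}$ is irrational for $j\neq0$. Indeed, $\phi_k$ is a unit of the real quadratic field $\mathbb{Q}(\sqrt{k^2+4})$ of infinite order, whose conjugate $\overline\phi_k=-1/\phi_k$ has modulus $\neq1$, so $\phi_k^j\notin\mathbb{Q}$. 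The exponents appearing above ($\mp1$, $\pm1$ and $2u^*\mp1$) are odd, hence nonzero, so equality is impossible.

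As a sanity check, the numerics agree. For $k=1$ one gets $L\approx-0.39>-\tfrac12$, which is case i). For $k=2$ one gets $L\approx-0.93$, which is case ii). For $k\ge3$ both cases iii) and iv) occur, depending on the sign of $9\phi_k-(k^2+4)$; this is why the statement allows either alternative.
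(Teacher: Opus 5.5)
Your argument works when $u_n\to\infty$, but it breaks down in the subcase you yourself flag as subtle, $u_n\equiv u^*$ eventually. The step $x_{n+1}=3F_k(u_n)F_k(v_n)(1+o(1))$ rests on bounding $x_n/(3F_k(u_n)F_k(v_n))$ by $F_k(v_n-u_n+1)/(3F_k(u_n)F_k(v_n))$. When $u_n\equiv u^*$, this bound tends to $\phi_k^{1-u^*}/(3F_k(u^*))>0$, not to $0$, and $v_n\to\infty$ does not help. The ratio genuinely does not vanish. Divide $\mm(F_k(u^*),x_n,F_k(v_n))=m$ by $F_k(v_n)^2$ and use $x_n<F_k(v_n-u_n+1)\le F_k(v_n)$ to select the smaller root. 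This gives $x_n/F_k(v_n)\to\rho_-:=\frac{3F_k(u^*)-\sqrt{9F_k(u^*)^2-4}}{2}>0$, hence $x_{n+1}/F_k(v_n)\to 3F_k(u^*)-\rho_-=\rho_+:=1/\rho_-$, not $3F_k(u^*)$. So your formula $L=\log_{\phi_k}(3F_k(u^*)\phi_k^{-u^*})\mp1$ is wrong in this subcase; the correct value is $L=\log_{\phi_k}\rho_+-u^*\mp1$. Likewise, the identity you exclude, $9F_k(u^*)^2=\phi_k^{\mathrm{odd}}$, is not the relevant one. What must be excluded is $\rho_+^2=\phi_k^{\,j}$, with $j=2u^*+1$ for $k\le2$ and $j=2u^*-1$ for $k\ge3$. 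Your rational-versus-irrational argument does not reach this identity, because $\rho_+$ is itself a quadratic irrational.

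The missing ingredient is a finer algebraic argument. Since $\rho_++\rho_+^{-1}=3F_k(u^*)$, the equality $\rho_+=\phi_k^{j/2}$ would give, after squaring, $\phi_k^{\,j}+\phi_k^{-j}=9F_k(u^*)^2-2\in\mathbb{Q}$. For odd $j$, however, $\phi_k^{-j}=-\overline{\phi}_k^{\,j}$, so $\phi_k^{\,j}+\phi_k^{-j}=D_kF_k(j)\notin\mathbb{Q}$. This is in substance what the paper does, organised differently. It never uses the recurrence. Instead it normalises $\mm\bigl(F_k(u_n),\phi_k^{v_n-u_n+\Delta}/D_k,F_k(v_n)\bigr)$ by $F_k(v_n)^2$ and computes the limit $L_{k,A}(\Delta)$ uniformly for $A=\lim u_n$ finite or infinite. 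It then shows $L_{k,A}(\pm\frac12)\neq0$ via $\sqrt{\phi_k}\notin\mathbb{Q}(\phi_k)$ (a norm argument), and concludes by continuity and monotonicity of $\mm$ in the middle variable. In the subcase $u_n\to\infty$ your route is legitimate: there $x_n<F_k(v_n)/F_k(u_n)$ by \eqref{eq:bound_product}, so the $o(1)$ is genuine. The only flaw there is a harmless slip: for $k\le2$ the excluded identity is $9=D_k^2\phi_k$, not $9=D_k^2\phi_k^{-1}$. Note also that your limit in that subcase, $\log_{\phi_k}(3/D_k)\mp1$, differs from the limit $\log_{\phi_k}(D_k/3)$ given by the paper's normalisation. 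Both follow validly from the hypotheses, and they agree only at the threshold. That incompatibility is exactly the contradiction behind Theorem~\ref{thm:only_principal}, so your numerical sanity check describes paths that do not exist.
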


\begin{proof}
Let us denote $A=\lim_{n\to\infty} u_n$. Since $u_n$ is non-decreasing by Lemma \ref{lemma:b_bound}, then $u_n$ is either eventually constant and equal to some integer $A>0$ or it diverges and $A=\infty$. 
We will begin by showing that the limit
$$L_{k,A}(\Delta):=\lim_{n\to\infty} \frac{1}{F_k(v_n)^2} \,\,\mm\!\left(F_k(u_n),\frac{1}{D_k} \phi_k^{v_n-u_n+\Delta},F_k(v_n)\right)$$
is always finite and it only depends on $k$, $A$ and $\Delta$. Expanding the expression of $\mm(a,b,c)$ we have
\begin{multline*}
\frac{1}{F_k(v_n)^2}\,\, \mm\!\left(F_k(u_n),\frac{1}{D_k} \phi_k^{v_n-u_n+\Delta},F_k(v_n)\right) = \frac{F_k(u_n)^2}{F_k(v_n)^2} + 1 + \frac{\phi_k^{2v_n-2u_n+2\Delta}}{D_k^2F_k(v_n)^2} -3 \frac{F_k(u_n) \phi_k^{v_n-u_n+\Delta}}{D_kF_k(v_n)}.
\end{multline*}
Now, since $v_n-u_n\to \infty$ by Corollary \ref{cor:c-a_limit}, together with \eqref{eqn:usefulasympt}, we have that
$$\lim_{n\to \infty} \frac{F_k(u_n)^2}{F_k(v_n)^2} = \lim_{n\to \infty} \phi_k^{-2(v_n-u_n)}=0.$$
$$\lim_{n\to\infty} \frac{\phi_k^{2v_n-2u_n+2\Delta}}{D_k^2F_k(v_n)^2} = \lim_{n\to\infty}\phi_k^{2\Delta-2u_n} = \begin{cases}
    \phi_k^{2\Delta-2A} & A<\infty,\\
    0& A=\infty.
\end{cases}$$
$$\lim_{n\to\infty} \frac{F_k(u_n) \phi_k^{v_n-u_n+\Delta}}{D_kF_k(v_n)} = \begin{cases}
    \frac{F_k(A)}{\phi_k^A} \phi_k^{\Delta} & A<\infty,\\
    \frac{1}{D_k} \phi_k^\Delta & A=\infty.
\end{cases}$$
Putting the previous computations together,
$$L_{k,A}(\Delta)=\begin{cases}
    1+\phi_k^{2\Delta-2A}-\frac{3F_k(A)}{\phi_k^A} \phi_k^{\Delta} & A<\infty,\\
    1-\frac{3}{D_k} \phi_k^\Delta & A=\infty,
\end{cases}$$
which is indeed always a finite real number. Now, we observe that if $\Delta= \frac{1}{2}$ or $\Delta=-\frac{1}{2}$ then the limit $L_{k,A}(\Delta)$ can never be zero. To show this, assume the contrary. Suppose that $L_{k,A}(1/2)=0$. This is impossible if $A=\infty$, since $L_{k,\infty}(1/2)=0$ would imply
$\phi_k^{1/2}=D_k/3$, and hence $\phi_k=(k^2+4)/9$, contradicting the irrationality of $\phi_k$. 

If $A<\infty$, then
\[
1+\phi_k^{1-2A}-3\frac{F_k(A)}{\phi_k^A}\phi_k^{1/2}=0,
\]
and therefore
\[
\sqrt{\phi_k}
=\frac{(1+\phi_k^{1-2A})\phi_k^A}{3F_k(A)}
=\frac{\phi_k^A+\phi_k^{1-A}}{3F_k(A)}.
\]
Since $A$ is an integer, the right-hand side belongs to $\mathbb{Q}(\phi_k)$. Thus
$\sqrt{\phi_k}\in\mathbb{Q}(\phi_k)$, so $\phi_k$ is a square in the quadratic field
$\mathbb{Q}(\phi_k)$. However, the field norm from $\mathbb{Q}(\phi_k)$ to $\mathbb{Q}$ satisfies
\[
N_{\mathbb{Q}(\phi_k)/\mathbb{Q}}(\phi_k)=\phi_k\overline{\phi}_k=-1,
\]
whereas the norm of a square is always a square in $\mathbb{Q}$. This is impossible.
Therefore $L_{k,A}(1/2)\neq 0$.

Assume first that $L_{k,A}(1/2)>0$. By continuity of $L_{k,A}(\Delta)$, there exists
$\varepsilon>0$ such that $L_{k,A}(1/2+\varepsilon)>0$. Since $F_k(v_n)\to\infty$, it follows that
\[
\mm\!\left(F_k(u_n),\frac{1}{D_k}\phi_k^{v_n-u_n+\frac12+\varepsilon},F_k(v_n)\right)\to\infty.
\]
Hence there exists $N>0,$ such that
\[
\mm\!\left(F_k(u_n),\frac{1}{D_k}\phi_k^{v_n-u_n+\frac12+\varepsilon},F_k(v_n)\right)>0\,,
\qquad\forall n\ge N.
\]
Since $\mm(x,y,z)$ is decreasing in $y$, we deduce that
\[
x_n>\frac{1}{D_k}\phi_k^{v_n-u_n+\frac12+\varepsilon}\,,
\qquad\forall n\ge N.
\]

\noindent Similarly, if $L_{k,A}(1/2)<0,$ then

$$
x_n<\frac{1}{D_k}\phi_k^{v_n-u_n+\frac12-\varepsilon}\,,
\qquad\forall n\ge N.
$$


If $k\ge 3$, by Lemma~\ref{lemma:b_bound}(\ref{lemma:b_bound_kg3a}) we know that
\[
x_n\in \bigl(F_k(v_n-u_n),\,F_k(v_n-u_n+1)\bigr).
\]
Combining this with the previous inequalities, alternatives \textup{(iii)} and \textup{(iv)} follow.

Then, alternatives (i) and (ii) follow analogously from Lemma \ref{lemma:b_bound}(\ref{lemma:b_bound_k12a}), repeating the previous argument with $\Delta=-\frac{1}{2}$. 
\end{proof}

\begin{theorem}[Theorem \ref{thm:only_principal} of the Introduction]
Let $m>0$ and $k\ge1$. Every infinite path of Markoff $m$-triples
with at least two $k$-Fibonacci components is contained in a principal $(2,k)$-Fibonacci branch.
\end{theorem}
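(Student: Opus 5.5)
Argue by contradiction: suppose an infinite $(2,k)$-Fibonacci path is not contained in any principal branch. By Lemma~\ref{lemma:no_a} and the discussion after it, after discarding finitely many initial triples we may assume the path has the alternating form \eqref{eq:generic_branch}. In that form $a_{2n}=F_k(u_n)$ and $c_{2n}=F_k(v_n)$ are $k$-Fibonacci, $x_n$ is not $k$-Fibonacci, and $x_{n+1}=3F_k(u_n)F_k(v_n)-x_n$. By Lemma~\ref{lemma:b_bound_strict}, the position of $x_n$ inside its Fibonacci interval is eventually pinned on one side of the "geometric midpoint" $\frac{1}{D_k}\phi_k^{v_n-u_n\mp 1/2}$, uniformly with a margin $\varepsilon$. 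The plan is to show that the recurrence for $x_{n+1}$ forces $x_{n+1}$ onto the \emph{other} side. This contradicts the fact that the same alternative (i)/(ii), resp.\ (iii)/(iv), holds for all $n\ge N$.

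\textbf{Step 1: asymptotics of $x_{n+1}$.} Write $x_n=\frac{1}{D_k}\phi_k^{v_n-u_n+\delta_n}$ with $\delta_n\in(-1,0)$ for $k\le2$ and $\delta_n\in(0,1)$ for $k\ge3$. Lemma~\ref{lemma:b_bound_strict} says $\delta_n$ stays eventually on one side of $\mp\tfrac12$ with a gap $\varepsilon$. Using Binet's formula \eqref{eqn:binet} and \eqref{eqn:usefulasympt}, $3F_k(u_n)F_k(v_n)\sim \frac{3}{D_k^2}\phi_k^{u_n+v_n}$ up to the factor $(1-\bar\phi_k^{u_n}/\phi_k^{u_n})$ when $u_n$ stays bounded. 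Since $v_n-u_n\to\infty$ (Corollary~\ref{cor:c-a_limit}), $x_n$ is negligible relative to $\phi_k^{u_n+v_n}$ if $u_n\to\infty$. If $u_n\equiv A$ is eventually constant, then $x_n$ is comparable to $\phi_k^{v_n-A}$, which is again of lower order than $\phi_k^{v_n+A}$ unless $A$ is tiny. Thus
\[
x_{n+1}=\tfrac{1}{D_k}\phi_k^{u_n+v_n+\eta_n},\qquad \eta_n\to \eta:=\log_{\phi_k}\!\Bigl(\tfrac{3}{D_k}c_A\Bigr),
\]
with $c_A=1$ if $A=\infty$ and $c_A=D_kF_k(A)/\phi_k^A$ otherwise, corrected by the $x_n$ term.

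\textbf{Step 2: matching the indices.} By \eqref{eq:cn-an_recurrence_k12}, resp.\ \eqref{eq:cn-an_recurrence_kge3}, the exponent $u_n+v_n$ equals $v_{n+1}-u_{n+1}-1$, resp.\ $v_{n+1}-u_{n+1}+1$. Hence $\delta_{n+1}=\eta_n-1$, resp.\ $\eta_n+1$, so $\delta_{n+1}$ converges to an explicit constant determined by $k$ and $A$. The relation $\mm=m$ with fixed $m$, divided by $F_k(v_n)^2$, should identify $\lim\delta_n$ as a root of $L_{k,A}(\Delta)=0$. The other root, with $\Delta\mapsto$ its "conjugate" under $\Delta_1+\Delta_2=$ const, corresponds to the Vieta partner $x_{n+1}$. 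Since $L_{k,A}$ is a quadratic in $\phi_k^{\Delta}$ whose two roots have product $\phi_k^{2A}\cdot(\text{const})$, the two roots $\Delta_\pm$ lie symmetrically about $\pm\tfrac12$ after the shift by the index relation. The strict separation from $\pm\tfrac12$ then shows that if $\delta_n$ is in alternative (i), $\delta_{n+1}$ is in alternative (ii), and vice versa.

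\textbf{Main obstacle.} Making Step 2 rigorous is the hard part: one must verify that the reflection induced by $x\mapsto 3F_k(u)F_k(v)-x$, combined with the index shift, is exactly the reflection about $\mp\tfrac12$. This has to hold both when $u_n\to\infty$ and when $u_n$ is eventually constant, including the possibility that $u_n$ jumps via $\nu_1$ steps. I would first try to handle it through the product of roots, $x_nx_{n+1}=F_k(u_n)^2+F_k(v_n)^2-m$, which is cleaner than the sum. Taking $\log_{\phi_k}$ and using $F_k(v_n)^2\sim\phi_k^{2v_n}/D_k^2$ gives $\delta_n+\delta_{n+1}\to -1$ (resp.\ $+1$) after the index relation. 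Then $\delta_n<-\tfrac12-\varepsilon$ forces $\delta_{n+1}>-\tfrac12+\varepsilon/2$ eventually. This is incompatible with a single alternative of Lemma~\ref{lemma:b_bound_strict} holding for all large $n$, and the contradiction proves the theorem.
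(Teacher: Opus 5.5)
Your final paragraph is the paper's proof, rewritten in logarithmic form. The paper combines the same three ingredients: the uniform alternative of Lemma~\ref{lemma:b_bound_strict}, the index relation \eqref{eq:cn-an_recurrence_k12} (resp.\ \eqref{eq:cn-an_recurrence_kge3}), and the Vieta product $x_nx_{n+1}=F_k(u_n)^2+F_k(v_n)^2-m\sim\phi_k^{2v_n}/D_k^2$. It concludes that two factors on the same side would make this product wrong by a factor $\phi_k^{\pm2\varepsilon}$, which is equivalent to your $\delta_n+\delta_{n+1}\to-1$ (resp.\ $+1$).

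You should drop Steps 1--2, which the final argument does not need. Step~1 is also wrong when $u_n\equiv A$ is eventually constant: $x_n$ is not negligible against $3F_k(A)F_k(v_n)$, since their ratio stays bounded away from $0$, so the claimed limit of $\eta_n$ fails there.
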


\begin{proof}
Suppose, on the contrary, that there exists an infinite $(2,k)$-Fibonacci path \\$\{(a_n,b_n,c_n)\}_{n\ge0}$
which is not contained in any principal $(2,k)$-Fibonacci branch.
By the previous discussion, after removing finitely many initial triples if necessary,
we may assume that it has the form \eqref{eq:generic_branch} for some sequences
$u_n$, $x_n$ and $v_n$. Suppose first that $k=1$ or $k=2$. By Lemma \ref{lemma:b_bound_strict}, there exists $\varepsilon>0$ and $N>0$ such that either
$$x_n>\frac{1}{D_k} \phi_k^{v_n-u_n-\frac{1}{2}+\varepsilon}\,, \quad \forall n\ge N,$$
or
\begin{equation}
    \label{eqn:alternative2}
x_n<\frac{1}{D_k} \phi_k^{v_n-u_n-\frac{1}{2}-\varepsilon}\,, \quad \forall n\ge N.
\end{equation}
First, assume the former. Then, by \eqref{eq:cn-an_recurrence_k12}, for all $n\ge N$ we also have that
$$x_{n+1}>\frac{1}{D_k} \phi_k^{v_{n+1}-u_{n+1}-\frac{1}{2}+\varepsilon}=\frac{1}{D_k} \phi_k^{v_n+u_n+\frac{1}{2}+\varepsilon}\,, \quad \forall n\ge N.$$
Thus,
$$x_n x_{n+1}>\frac{1}{D_k^2}\phi_k^{2v_n+2\varepsilon}.$$
Since $(a_{2n}, b_{2n}, c_{2n}) = (F_k(u_n), x_n, F_k(v_n))$ and $(a_{2n+1}, b_{2n+1}, c_{2n+1}) = (F_k(u_n), F_k(v_n), x_{n+1})$, it follows that $x_n$ and $x_{n+1}$ are the two solutions to the quadratic equation 
$$x^2-3F_k(u_n)F_k(v_n)x + F_k(u_n)^2+F_k(v_n)^2-m=0,$$ so $x_n x_{n+1}=F_k(u_n)^2+F_k(v_n)^2-m$. This means that
$$1=\frac{F_k(u_n)^2+F_k(v_n)^2-m}{x_n x_{n+1}} <\frac{D_k^2(F_k(u_n)^2+F_k(v_n)^2-m)}{\phi_k^{2v_n}} \phi_k^{-2\varepsilon}\,, \quad \forall n\ge N,$$
but, as $\lim_{n\to\infty} (v_n-u_n)=\infty$ by Corollary \ref{cor:c-a_limit}, together with \eqref{eqn:usefulasympt}, 
the limit of the right hand side is $\phi_k^{-2\varepsilon}$. Since $\varepsilon>0$, this limit is strictly less than $1$, which is impossible. Analogously, if \eqref{eqn:alternative2} holds, then
$$x_{n+1}<\frac{1}{D_k} \phi_k^{v_n+u_n+\frac{1}{2}-\varepsilon},$$
and we would have
$$1=\frac{F_k(u_n)^2+F_k(v_n)^2-m}{x_n x_{n+1}} >\frac{D_k^2(F_k(u_n)^2+F_k(v_n)^2-m)}{\phi_k^{2v_n}} \phi_k^{2\varepsilon}\,, \quad \forall n\ge N,$$
which is impossible, since the limit of the right-hand side is $\phi_k^{2\varepsilon}>1$,
again by Corollary \ref{cor:c-a_limit} and \eqref{eqn:usefulasympt}. The result is completely analogous if $k\ge 3$. In this case, either
$$x_n>\frac{1}{D_k} \phi_k^{v_n-u_n+\frac{1}{2}+\varepsilon}\,, \quad \forall n\ge N,$$
or
$$x_n<\frac{1}{D_k} \phi_k^{v_n-u_n+\frac{1}{2}-\varepsilon}\,, \quad \forall n\ge N,$$
and using \eqref{eq:cn-an_recurrence_kge3} we have respectively
$$x_{n+1}>\frac{1}{D_k} \phi_k^{v_n+u_n-\frac{1}{2}+\varepsilon}\,, \quad \forall n\ge N,$$
or
$$x_{n+1}<\frac{1}{D_k} \phi_k^{v_n+u_n-\frac{1}{2}-\varepsilon}\,, \quad \forall n\ge N.$$
Thus we have $x_n x_{n+1}>\frac{1}{D_k^2} \phi_k^{2v_n+2\varepsilon}$ or $x_n x_{n+1}<\frac{1}{D_k^2}\phi_k^{2v_n-2\varepsilon}$ respectively, and we can apply the same argument as before.
\end{proof}

\section{Examples}\label{section:examples}
We conclude by presenting several examples of principal $(2,k)$-Fibonacci branches in different Markoff trees, illustrating the results proved in the previous sections.

\begin{example}[The case $r=1$]
Consider the family of triples
$
(\alpha_{k,1},F_k(\ell),F_k(\ell+2))$ with $\ell\ge 2,
$ and $3\nmid k$. In this case there are two branches, determined by the initial triples
$$(F_k(2),\alpha_{k,1},F_k(4))=\left(k,\frac{k^2+2}{3},k^3+2k\right) \quad (F_k(1),\alpha_{k,1},F_k(3))=\left(1,\frac{k^2+2}{3},k^2+1\right).$$
When $m>0$, these are the corresponding minimal triples.

The even-parity branch, corresponding to even values of $\ell\ge 2$, satisfies
\[
\mm(\alpha_{k,1},F_k(\ell),F_k(\ell+2))
=
\mm\!\left(k,\frac{k^2+2}{3},k^3+2k\right)
=
\frac{1}{9}(k^4+13k^2+4).
\]

The odd-parity branch, corresponding to odd values of $\ell\ge 1$, satisfies
\[
\mm(\alpha_{k,1},F_k(\ell),F_k(\ell+2))
=
\mm\!\left(1,\frac{k^2+2}{3},k^2+1\right)
=
\frac{1}{9}(k^4-5k^2+4).
\]
\end{example}

\medskip

The next two examples show that our results are consistent with previously known cases in the literature.

\begin{example}[The Fibonacci case: $r=1$, $k=1$]
In this case, the initial triples are $
(1,1,3)$ and 
$(1,1,2).$
The first one is minimal, whereas the second is not, since in this case $m=0$. The even-parity branch satisfies
\[
\mm(1,F_1(n),F_1(n+2))=2,
\]
whereas the odd-parity branch satisfies
\[
\mm(1,F_1(n),F_1(n+2))=0.
\]
These two principal $(2,1)$-branches and trees agree with the results obtained in \cite{ACMRS25a} and \cite{LS}, respectively.
\end{example}

\medskip

\begin{example}[The Pell case: $r=1$, $k=2$] In this case, the two  initial triples are $(2,2,12)$ and $(1,2,5).$
Only the first one is minimal. In this case, the even-parity branch satisfies
\[
\mm(2,F_2(n),F_2(n+2))=8,
\]
whereas the odd-parity branch satisfies
\[
\mm(2,F_2(n),F_2(n+2))=0.
\]
Once again, this is consistent with \cite{ACMRS25b} and \cite{KST}, respectively.
\end{example}

\medskip

We now present an example that does not seem to have appeared previously in the literature.

\begin{example}[Case $r=1$ and $k=4$ ] In this case $\alpha_{4,1}=6$. There are two $(2,4)$-Fibonacci branches corresponding to the initial triples $\{(4,6,72), (1,6,17)\}$. 
Once again, the first is a minimal triple and the second is not, but $\nu_3(1,6,17)=(1,1,6)$ is minimal.

In this case, the even-parity branch satisfies that
$\mm(6,F_{4}(n),F_{4}(n+2))=52$ while the odd-parity branch has  $\mm(6,F_{4}(n),F_{4}(n+2))=20$. 
\end{example}
\medskip 

\begin{figure}[H]
\centering
\begin{tikzpicture}[grow'=right, edge from parent/.style={draw, -latex}]
\tikzset{
  level 1/.style={level distance=3.5cm, sibling distance=0.35cm},
  level 2/.style={level distance=4.5cm, sibling distance=0.35cm},
  level 3/.style={level distance=5.5cm, sibling distance=0.35cm}
}
\Tree 
[. (4,6,72)
    [.\textbf{(6,72,1292)}
        [.\textbf{(6,1292,23184)}
            [.\textbf{(6,23184,416020)} ]
            [.(1292,23184,89861178) ]
        ]
        [.(72,1292,279066)
            [.(72,279066,60276964) ]
            [.(1292,279066,1081659744) ]
        ]
    ]
]
\end{tikzpicture}
\caption{Beginning of the Markoff $52$-tree, with minimal triple $(4,6,72)$. The bold path shows the principal $(2,4)$-Fibonacci branch formed by the triples $\bigl(6,F_4(n),F_4(n+2)\bigr)$, where $n$ is even.}
\label{52-markovtree}
\end{figure}

\begin{figure}[H]
\centering
\begin{tikzpicture}[grow'=right, edge from parent/.style={draw, -latex}]
\tikzset{
  level 1/.style={level distance=3.5cm, sibling distance=0.35cm},
  level 2/.style={level distance=4.5cm, sibling distance=0.35cm},
  level 3/.style={level distance=5.5cm, sibling distance=0.35cm}
}
\Tree 
[. (1,1,6)
    [.(1,6,17)
        [.\textbf{(6,17,305)}
            [.\textbf{(6,305,5473)} ]
            [.(17,305,15549) ]
        ]
        [.(1,17,45)
            [.(1,45,118) ]
            [.(17,45,2294) ]
        ]
    ]
]
\end{tikzpicture}
\caption{Beginning of the Markoff $20$-tree with minimal triple $(1,1,6)$. The bold path shows the principal $(2,4)$-Fibonacci branch formed by the triples $\bigl(6,F_4(n),F_4(n+2)\bigr)$, where $n$ is odd.}
\label{20-markovtree-odd}
\end{figure}

Finally, we compute the branches in an example with $r>1.$

\begin{example}[Case $r=3$ and $k=1$] In this case $\alpha_{1,3}=6$. We consider even cases ($\ell=2,4,6$).  They all have Markoff constant $m=100$. In the odd cases, by Proposition \ref{prop2.5}, we obtain $m <0$.

\begin{figure}[H]
\centering
\begin{tikzpicture}[grow'=right, edge from parent/.style={draw, -latex}]
\tikzset{
  level 1/.style={level distance=3.5cm, sibling distance=0.35cm},
  level 2/.style={level distance=4.5cm, sibling distance=0.35cm},
  level 3/.style={level distance=5.5cm, sibling distance=0.35cm}
}
\Tree 
[. (1,6,21)
    [.\textbf{(6,21,377)}
        [.\textbf{(6,377,6765)}
            [.\textbf{(6,6765,121393)} ]
            [.(377,6765,7651209) ]
        ]
        [.(21,377,23745)
            [.(21,23745,1495558) ]
            [.(377,23745,26855574) ]
        ]
    ]
]
\end{tikzpicture}
\caption{Beginning of the Markoff $100$-tree with minimal triple $(1,6,21)$. The bold path shows the principal $(2,1)$-Fibonacci branch formed by the triples $\bigl(6,F_1(n),F_1(n+6)\bigr)$, where $n$ is even.}
\label{100-markovtree-ell-2-fix}
\end{figure}

\begin{figure}[H]
\centering
\begin{tikzpicture}[grow'=right, edge from parent/.style={draw, -latex}]
\tikzset{
  level 1/.style={level distance=3.5cm, sibling distance=0.35cm},
  level 2/.style={level distance=4.5cm, sibling distance=0.35cm},
  level 3/.style={level distance=5.5cm, sibling distance=0.35cm}
}
\Tree 
[. (3,6,55)
    [.\textbf{(6,55,987)}
        [.\textbf{(6,987,17711)}
            [.\textbf{(6,17711,317811)} ]
            [.(987,17711,52442265) ]
        ]
        [.(55,987,162849)
            [.(55,162849,26869098) ]
            [.(987,162849,482195834) ]
        ]
    ]
]
\end{tikzpicture}
\caption{Beginning of the Markoff $100$-tree with minimal triple $(3,6,55)$. The bold path shows the principal $(2,1)$-Fibonacci branch formed by the triples $\bigl(6,F_1(n),F_1(n+6)\bigr)$, where $n$ is even.}
\label{100-markovtree-ell-4-fix}
\end{figure}

\begin{figure}[H]
\centering
\begin{tikzpicture}[grow'=right, edge from parent/.style={draw, -latex}]
\tikzset{
  level 1/.style={level distance=3.5cm, sibling distance=0.35cm},
  level 2/.style={level distance=4.5cm, sibling distance=0.35cm},
  level 3/.style={level distance=5.5cm, sibling distance=0.35cm}
}
\Tree 
[. (6,8,144)
    [.\textbf{(6,144,2584)}
        [.\textbf{(6,2584,46368)}
            [.\textbf{(6,46368,832040)} ]
            [.(2584,46368,359444730) ]
        ]
        [.(144,2584,1116282)
            [.(144,1116282,482231240) ]
            [.(2584,1116282,8653418058) ]
        ]
    ]
]
\end{tikzpicture}
\caption{Beginning of the Markoff $100$-tree with minimal triple $(6,8,144)$. The bold path shows the principal $(2,1)$-Fibonacci branch formed by the triples $\bigl(6,F_1(n),F_1(n+6)\bigr)$, where $n$ is even.}
\label{100-markovtree-ell-6-fix}
\end{figure}
\end{example}


\end{document}